%%%%%%%%%%%%%%%%%%%%%%%%%%%%%%%%%%%%%%%%%%%%%%%%%%%%%%%%%%
% 
%
%
%Actions of some pointed Hopf algebras on path algebras of quivers
%
%
%%%%%%%%%%%%%%%%%%%%%%%%%%%%%%%%%%%%%%%%%%%%%%%%%%%%%%%%%%

\documentclass{amsart}
\usepackage{latexsym,amsxtra,amscd,ifthen,amsmath,color, multicol,hyperref}
\usepackage{amsfonts}
\usepackage{verbatim}
\usepackage{amsmath}
\usepackage{amsthm}
\usepackage{amssymb}
 \usepackage[all,cmtip]{xy}
 \usepackage[normalem]{ulem} 
\usepackage{enumerate}
 \usepackage{tikz}
\usetikzlibrary{matrix,arrows}

\setlength\topmargin{0in}
\setlength\headheight{0in}
\setlength\headsep{0.3in}
\setlength\textheight{8.7in}
 \setlength\textwidth{6.5in}
\setlength\oddsidemargin{0in}
\setlength\evensidemargin{0in}

\theoremstyle{plain}
\newtheorem{theorem}{Theorem}[section]

\newtheorem{lemma}[theorem]{Lemma}

\newtheorem{proposition}[theorem]{Proposition}
\newtheorem{hypothesis}[theorem]{Hypothesis}
\newtheorem{corollary}[theorem]{Corollary}

\theoremstyle{definition}
\newtheorem{definition}[theorem]{Definition}
\newtheorem{example}[theorem]{Example}
\newtheorem{notation}[theorem]{Notation}

\newtheorem{convention}[theorem]{Convention}

\newtheorem{remark}[theorem]{Remark}
\newtheorem{question}[theorem]{Question}

\numberwithin{equation}{section}

\makeatletter              % This sequence of commands will
\let\c@equation\c@theorem  % incorporate equation numbering
                            % into theorem numbering scheme
\makeatother

\newcommand{\zg}{\gamma}
\newcommand{\zl}{\lambda}

\newcommand{\kk}{\Bbbk}

\newcommand{\uqsl}{u_q(\mathfrak{sl}_2)}
\newcommand{\uqslu}{u_q^{\geq 0}(\mathfrak{sl}_2)}
\newcommand{\uqsll}{u_q^{\leq 0}(\mathfrak{sl}_2)}

\begin{document}

\title[Pointed Hopf actions on path algebras]
{Actions of some pointed Hopf algebras on path algebras of quivers}

\author{Ryan Kinser}
\address{Department of Mathematics, University of Iowa, Iowa City, Iowa 52242, USA}
\email{ryan-kinser@uiowa.edu}

\author{Chelsea Walton}
\address{Department of Mathematics, Massachusetts Institute of Technology, Cambridge, Massachusetts 02139,
USA}
\email{notlaw@math.mit.edu}

\bibliographystyle{abbrv}       % Set the bibliography style to AMS

\begin{abstract}
We classify Hopf actions of Taft algebras $T(n)$ on path algebras of quivers, in the setting where the quiver is loopless, finite, and Schurian.  As a corollary, we see that every quiver admitting a faithful $\mathbb{Z}_n$-action (by directed graph automorphisms) also admits inner faithful actions of a Taft algebra.  Several examples for actions of the Sweedler algebra $T(2)$ and for actions of $T(3)$ are presented in detail. We then extend the results on Taft algebra actions on path algebras to actions of the Frobenius-Lusztig kernel $u_q(\mathfrak{sl}_2)$, and to actions of the Drinfeld double of $T(n)$.
\end{abstract}

\subjclass[2010]{05C20, 16S99, 16T05}

\keywords{Hopf action, module algebra, path algebra,  Schurian quiver, Taft algebra}

\maketitle

\setcounter{tocdepth}{1}
\tableofcontents

%%%%%%%%%%%%%%%%%%%%%%%%%%%%%%%%%%%%%%%
%%%%%%%%%%%%%%%%%%%%%%%%%%%%%%%%%%%%%%%
%%%%%%%%%%%%%%%%%%%%%%%%%%%%%%%%%%%%%%%
\section{Introduction} \label{sec:intro}

Let $n$ be an integer $\geq 2$ and let $\kk$ be a field containing a primitive $n$-th root of unity $\zeta$.  Both $\kk$ and $n$ will be fixed but arbitrary subject to this condition throughout the paper.  Note that if char$(\kk) =p > 0$, this implies that $p$ and $n$ are coprime.
All algebras in this work are associative $\kk$-algebras and let an unadorned $\otimes$  denote $\otimes_{\kk}$.

Generalizing the classical notion of a group acting on an algebra by automorphisms, one can consider actions of Hopf algebras (e.g., quantum groups).
However, one obstacle is that the intricate structure of a Hopf algebra often prevents non-trivial actions on an algebra. When such actions exist, they can be difficult to construct and are not generally well understood. This paper presents a case where a classification of these actions is achieved. Here, we consider actions of some finite dimensional, pointed Hopf algebras, namely actions of Taft algebras as a start. The algebra being acted upon is the path algebra of a quiver, and actions are subject to Hypothesis~\ref{hyp:standing}.  All necessary background, including definitions, is recalled in Section~\ref{sec:background}.
In particular, we address the following question:
\begin{question} \label{question}
When does the path algebra of a quiver admit a non-trivial action of a (finite dimensional, pointed) Hopf algebra?  Specifically, of a Taft algebra?
\end{question}

Actions by Taft algebras are referred to as {\it Taft actions} for short.
We give a complete answer to the question above for Taft actions, and extend Taft actions to actions of the quantum group $\uqsl$ and actions of the Drinfeld double of a Taft algebra, under the following conditions. 
\begin{hypothesis} \label{hyp:standing} Unless stated otherwise, we impose the assumptions below.
\begin{enumerate}
\item The quiver $Q$ is finite, loopless, and Schurian.
\item Hopf actions preserve the path length filtration of the path algebra $\kk Q$. 
\end{enumerate}
\end{hypothesis}

It is easy to see that $Q$ must at least admit a non-trivial action of the cyclic group $\mathbb{Z}_n$ (namely, the group of grouplike elements of $T(n)$) to admit a non-trivial action of the $n$-th Taft algebra $T(n)$; see Example~\ref{ex:GfixesQ0}. So, loosely speaking, we are interested in when a path algebra of a quiver that admits classical symmetry admits additional quantum symmetry. 
Our strategy is to identify a class of quivers which is small enough so that we can explicitly describe all Taft actions on their path algebras, but large enough so that every quiver admitting a Taft action is a union of quivers in this class. We call these quivers the {\it $\mathbb{Z}_n$-minimal quivers} [Definition~\ref{def:typeAB}].  The reader may wish to look over Section \ref{sec:Sweedler} early on for a complete account of the case $n=2$: actions of the Sweedler algebra $T(2)$ on $\mathbb{Z}_2$-minimal quivers. 

To begin, we first note that any action on a path algebra must restrict to an action on the subalgebra generated by the vertices, by Hypothesis \ref{hyp:standing}(b).  So we start by classifying Taft actions on products of fields in Proposition \ref{prop:TaftactQ0}.  
Then, the form of actions on vertices places significant restrictions on actions on the arrows.  The following theorem summarizes our results, with reference to more detailed statements in the body of the paper.   Here, we let $g$ and $x$  be the standard generators of $T(n)$, where $g$ is grouplike, $x$ is $(1,g)$ skew-primitive, and $xg = \zeta gx$ for 
$\zeta$ some primitive $n$-th root of unity (see Section \ref{subsec:Hopfactions}).  We identify the cyclic group generated by $g$ with $\mathbb{Z}_n$.

\begin{theorem} \label{thm:mainintro}
Let $Q$ be a quiver, and suppose we have a Taft action on its path algebra $\kk Q$.

\noindent (a)  The Taft action determines an action of $\mathbb{Z}_n$ on $Q$ by quiver automorphisms [Lemma~\ref{lem:GactkQ}].

\noindent (b) Each $\mathbb{Z}_n$-orbit of vertices is stable under $T(n)$.
If we let $\{e_1, \dotsc, e_m\}$ be the collection of trivial paths corresponding to some orbit of vertices, numbered so that $g \cdot e_i = e_{i+1}$ with subscripts taken modulo $m$, then the action of $x$ on these 
is given by
$$x \cdot e_i = \gamma \zeta^i e_i - \gamma \zeta^{i+1} e_{i+1},$$
for any scalar $\gamma \in \kk$ [Proposition~\ref{prop:TaftactQ0}].

\noindent (c) For each arrow $a$ of $Q$, the action of $x$ on $a$ is given by
$$x \cdot a = \alpha a + \beta (g \cdot a) + \lambda \sigma(a),$$
for some scalars $\alpha, \beta$, and $\lambda$.  Here,  $\sigma(a)$ is an arrow or trivial path with the same source as $a$ and the same target as $g\cdot a$ [Notation~\ref{not:sigma}, Proposition~\ref{prop:Taftactarrow}].
Furthermore, when $Q$ is a $\mathbb{Z}_n$-minimal quiver, these scalars are determined explicitly by the formulae \eqref{eq:A} and \eqref{eq:B} [Theorems~\ref{thm:T(n)typeA},~\ref{thm:T(n)typeB}]. 
\end{theorem}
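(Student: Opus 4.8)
The plan is to establish the three parts in order, since each rests on the previous; throughout I use that $x$ is $(1,g)$-skew primitive, so that $\Delta(x) = x\otimes 1 + g\otimes x$ and $\varepsilon(x)=0$, together with the module algebra axiom $h\cdot(bc)=\sum(h_{(1)}\cdot b)(h_{(2)}\cdot c)$ and Hypothesis~\ref{hyp:standing}(b). For part (a), since $g$ is grouplike it acts on $\kk Q$ by an algebra automorphism, which by Hypothesis~\ref{hyp:standing}(b) preserves both the degree-zero subalgebra $\kk Q_0=\bigoplus_v \kk e_v$ and the arrow span. As $\kk Q_0$ is a product of copies of $\kk$, any algebra automorphism permutes its primitive idempotents $\{e_v\}$, and $g^n=1$ forces this permutation to have order dividing $n$, producing the $\mathbb{Z}_n$-action on vertices; the fact that $g$ carries $e_{t(a)}(\kk Q)e_{s(a)}$ to $e_{g(t(a))}(\kk Q)e_{g(s(a))}$ upgrades this to a quiver automorphism, which is Lemma~\ref{lem:GactkQ}.

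For part (b), fix an orbit $\{e_1,\dots,e_m\}$ with $g\cdot e_i=e_{i+1}$. Applying the axiom to $e_i=e_i e_i$ yields
\[ x\cdot e_i=(x\cdot e_i)e_i+(g\cdot e_i)(x\cdot e_i)=(x\cdot e_i)e_i+e_{i+1}(x\cdot e_i). \]
Since $x\cdot e_i\in\kk Q_0$, expanding in the idempotent basis and using orthogonality forces $x\cdot e_i$ to be supported only on $e_i$ and $e_{i+1}$; this already shows the orbit span is stable under $T(n)$. Writing $x\cdot e_i=c_i e_i+d_i e_{i+1}$, I would use the relation $xg=\zeta gx$ in the form $x\cdot e_{i+1}=x\cdot(g\cdot e_i)=\zeta\,g\cdot(x\cdot e_i)$ to extract the recursions $c_{i+1}=\zeta c_i$ and $d_{i+1}=\zeta d_i$, and the identity $x\cdot 1=\varepsilon(x)1=0$ (summed over the orbit) to extract $c_j+d_{j-1}=0$. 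Solving these gives $x\cdot e_i=\gamma\zeta^i e_i-\gamma\zeta^{i+1}e_{i+1}$; the cyclic consistency $c_{i+m}=c_i$ then forces $\gamma(1-\zeta^m)=0$, so $\gamma$ is an arbitrary scalar exactly when the orbit is regular ($m=n$) and must otherwise vanish. This is Proposition~\ref{prop:TaftactQ0}.

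For part (c), because the action preserves the length filtration rather than the grading, $x\cdot a\in\kk Q_{\le 1}$. Writing $a=e_{t(a)}\,a\,e_{s(a)}$ and applying the axiom on the right gives $x\cdot a=(x\cdot a)e_{s(a)}+(g\cdot a)(x\cdot e_{s(a)})$; substituting the part (b) formula shows that the source-$g(s(a))$ component of $x\cdot a$ is precisely a scalar multiple of $g\cdot a$, and every other component has source $s(a)$. Applying the axiom on the left shows symmetrically that the target-$t(a)$ component is a scalar multiple of $a$, and every other component has target $g(t(a))$. Thus $x\cdot a$ is supported on the four idempotent blocks with source in $\{s(a),g(s(a))\}$ and target in $\{t(a),g(t(a))\}$: it is $\gamma\zeta^{t(a)}a$ on the $a$-block and the computed multiple of $g\cdot a$ on the $(g\cdot a)$-block, the $(t(a),g(s(a)))$-block vanishes, and the remaining $(g(t(a)),s(a))$-block is a multiple of the unique path of length $\le 1$ from $s(a)$ to $g(t(a))$. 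By the Schurian and loopless hypotheses this path is a single arrow, or the trivial path $e_{s(a)}$ when $s(a)=g(t(a))$; naming it $\sigma(a)$ [Notation~\ref{not:sigma}] gives $x\cdot a=\alpha a+\beta(g\cdot a)+\lambda\sigma(a)$, which is Proposition~\ref{prop:Taftactarrow}.

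Finally, to obtain the explicit scalars on a $\mathbb{Z}_n$-minimal quiver, I would cut down the remaining freedom by imposing the defining relation $x^n=0$ of $T(n)$: this forces $x$ to act as a nilpotent operator on the finite-dimensional span of the arrows and trivial paths in the orbits involved, and rewriting $x^n\cdot a=0$ via the formulas of parts (b) and (c) produces polynomial identities in $\alpha,\beta,\lambda$ and $\zeta$. The rigid combinatorics of the type A and type B minimal quivers [Definition~\ref{def:typeAB}] should make these identities solvable in closed form, yielding \eqref{eq:A} and \eqref{eq:B} [Theorems~\ref{thm:T(n)typeA},~\ref{thm:T(n)typeB}]. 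I expect this last step to be the main obstacle: parts (a)--(c) are essentially forced by the coproduct of $x$ and the idempotent and Schurian bookkeeping, whereas pinning down the exact scalars requires a delicate, quiver-type-dependent analysis of the nilpotency relation, together with a check that the resulting candidate assignments genuinely extend to well-defined $T(n)$-actions.
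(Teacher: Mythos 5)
Your treatment of parts (a), (b), and the structural half of (c) is correct and follows essentially the same route as the paper: the idempotent bookkeeping $x\cdot e_i = e_i(x\cdot e_i)+(x\cdot e_i)(g\cdot e_i)$, the constraints from $x\cdot 1=0$ and $xg=\zeta gx$, the dichotomy $\gamma(1-\zeta^m)=0$, and the two-sided support analysis of $x\cdot a$ via $a=e_{s(a)}a=ae_{t(a)}$ combined with the Schurian hypothesis are exactly Lemma~\ref{lem:GactkQ} and Propositions~\ref{prop:TaftactQ0} and~\ref{prop:Taftactarrow}. (One caution: you write $\Delta(x)=x\otimes 1+g\otimes x$, whereas the paper's $(1,g)$-skew-primitive convention gives $\Delta(x)=1\otimes x+x\otimes g$; your conclusions still match because you also reversed the path-composition convention, but the two reversals must be kept consistent throughout.)

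The genuine gap is in the final claim of (c), which you explicitly defer. Two things are missing. First, before $x^n=0$ ever enters, the relation $xg=\zeta gx$ applied to \emph{arrows} (not just vertices) is what links the scalars attached to $a$ and to $g\cdot a$; it yields the recursion $\lambda_{i+1,j+1}\mu_{i,j}=\zeta\lambda_{i,j}\mu_{i,j+1}$ in Theorems~\ref{thm:T(n)typeA} and~\ref{thm:T(n)typeB}, and your sketch never invokes it. Second, and more substantially, the analysis of $x^n=0$ is not a routine ``solvable in closed form'' step: the paper computes $x^k\cdot a^i_j$ for all $k$ by an induction whose coefficients are products of complete homogeneous symmetric polynomials evaluated at $1,\zeta,\dots,\zeta^{t-s}$ (Lemma~\ref{lem:powerx}), and then uses the principal specialization $h_a(1,\zeta,\dots,\zeta^b)={a+b\brack a}_{q=\zeta}$ and its vanishing when $n\mid a+b$ (Lemma~\ref{lem:hvanish}) to show that for Type A the relation $x^n=0$ imposes \emph{no} further condition, while for Type B it imposes exactly $(\gamma_+)^n=(\gamma_-)^n+\prod_{\ell=0}^{n-1}\lambda_{i,j+\ell}$. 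This also supplies the existence direction implicit in the statement (that every choice of parameters satisfying the listed conditions really is a $T(n)$-action), which your necessity-only argument does not address. Without carrying out this computation one cannot know which polynomial identities $x^n=0$ actually produces, so the formulae \eqref{eq:A} and \eqref{eq:B} remain unproved in your proposal.
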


With an explicit parametrization of Taft actions on path algebras of $\mathbb{Z}_n$-minimal quivers, it remains to show that this is sufficient to parametrize Taft actions on path algebras of quivers subject to Hypothesis~\ref{hyp:standing}.  To do this, we introduce the notion of a $\mathbb{Z}_n$-component of a quiver with $\mathbb{Z}_n$-action [Definition \ref{def:component}].
These are the smallest subquivers of $Q$ which have at least one arrow and are guaranteed to be stable under the action of $T(n)$ for any choice of parameters.
Moreover, see Definition~\ref{def:compatible} for the notion of a {\it compatible} collection of Taft actions.

\begin{theorem}[Lemmas~\ref{lem:Gminl}, \ref{lem:restrict}, Theorem~\ref{thm:glue}, Corollary~\ref{cor:non-trivial}] \label{thm:introglue} 
Fix an action of  $\mathbb{Z}_n$ on a quiver $Q$. Then, $Q$ decomposes uniquely into a union of its $\mathbb{Z}_n$ components, and any Taft action on $\kk Q$ restricts to an action on each component.
Moreover, this decomposition gives a bijection between Taft actions on $\kk Q$ and compatible collections of Taft actions on the $\mathbb{Z}_n$-components of $Q$.
In particular, any path algebra of a quiver with a faithful action of $\mathbb{Z}_n$ admits an inner faithful action of the $n$-th Taft algebra $T(n)$.
\end{theorem}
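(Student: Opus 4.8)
The plan is to establish the four assertions in order: the combinatorial decomposition of $Q$ into $\mathbb{Z}_n$-components, the fact that a Taft action restricts to each component, the restriction/gluing bijection, and finally the inner-faithfulness corollary. Throughout I would work at the level of path-lengths $0$ and $1$, since by Hypothesis~\ref{hyp:standing}(b) together with the loopless, Schurian hypotheses, $\kk Q$ is the tensor algebra on its arrows over its (semisimple) vertex subalgebra, carries no relations, and any $T(n)$-action is determined by its effect on vertices and arrows.

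For the decomposition, I would use Theorem~\ref{thm:mainintro}(c): for every arrow $a$, the element $x\cdot a$ is supported only on $a$, $g\cdot a$, and $\sigma(a)$. Hence the smallest arrow-containing subquiver whose span of vertices and arrows is stable under $T(n)$ for all parameter choices is the one obtained by starting from a single arrow and closing under both the $g$-action and the operation $\sigma$; this is the $\mathbb{Z}_n$-component of $a$ [Definition~\ref{def:component}]. Since each arrow determines such a closure, any two components either coincide or share no arrow, which yields both existence and uniqueness of the decomposition of $Q$ into its $\mathbb{Z}_n$-components [Lemma~\ref{lem:Gminl}]. Given a fixed Taft action and a component $C$, its defining closure property makes the span of the vertices and arrows of $C$ invariant under $g$ and $x$, hence under all of $T(n)$; as $\kk C$ is the subalgebra of $\kk Q$ generated by these vertices and arrows, the action descends to a Taft action on $\kk C$ [Lemma~\ref{lem:restrict}].

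The forward map of the bijection sends a Taft action to the tuple of its restrictions, and I would check that these share a common underlying $\mathbb{Z}_n$-action and agree on every shared vertex (in particular on the scalars $\gamma$ of Theorem~\ref{thm:mainintro}(b)), so that they form a compatible collection [Definition~\ref{def:compatible}]. For the inverse, given a compatible collection I would define $g$ on $\kk Q$ by the common $\mathbb{Z}_n$-action and define $x$ on each vertex and arrow by the rule dictated by the component containing it; compatibility guarantees this is single-valued on vertices lying in several components. Since $\kk Q$ is free on its generators over the vertex algebra, this extends uniquely to a $g$-skew derivation by the skew-Leibniz rule. The main obstacle is then verifying that the glued pair $(g,x)$ actually defines a $T(n)$-module algebra action, i.e. satisfies the Taft relations $xg=\zeta gx$ and $x^n=0$ as operators on all of $\kk Q$, rather than merely furnishing a skew-derivation. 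Both relations can be checked on generators, and the only generators whose behavior is not governed by a single component are products $ab$ of arrows in distinct components $C_1,C_2$. Here I would again invoke Theorem~\ref{thm:mainintro}(c): because $x\cdot a$ and $x\cdot b$ stay inside $C_1$ and $C_2$ respectively, the evaluation of $x$ on $ab$ differs from the per-component prescriptions only through the globally defined $g$-action and the scalar attached to the shared vertex, so compatibility on that vertex is exactly what forces consistency, and the relations reduce to the per-component Taft relations [Theorem~\ref{thm:glue}].

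For the corollary, I would recall that the proper Hopf-algebra quotients of $T(n)$ all factor through the group algebra $\kk\mathbb{Z}_n$, equivalently send $x\mapsto 0$; consequently a Taft action is inner faithful precisely when $g$ acts with order exactly $n$ and $x$ acts nonzero. Given a faithful action of $\mathbb{Z}_n$ on $Q$, the first condition holds by hypothesis, and faithfulness forces some vertex to lie in an orbit of size greater than $1$. Using the bijection just established, I would build a Taft action realizing this $\mathbb{Z}_n$-action with parameter $\gamma\neq 0$ on that orbit and trivial parameters elsewhere; by Theorem~\ref{thm:mainintro}(b) the operator $x$ then acts nonzero on the corresponding trivial paths, so the resulting action is inner faithful [Corollary~\ref{cor:non-trivial}].
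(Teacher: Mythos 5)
Your reduction to path-lengths $0$ and $1$ and the overall architecture (decompose, restrict, glue, then specialize parameters for inner faithfulness) match the paper, but there is a genuine gap at the very first step. You identify the $\mathbb{Z}_n$-component of an arrow $a$ with the closure of $\{a\}$ under $g$ and $\sigma$, and deduce that two components either coincide or share no arrow. This is false: $\sigma$ is not invertible on arrows, so these closures need not partition $Q_1$. For instance, in the $\mathbb{Z}_3$-stable subquiver of $K_{3,3}$ with arrows $\{b^i_i\}\cup\{b^i_{i+1}\}$, one has $\sigma(b^i_i)=b^i_{i+1}$ but $\sigma(b^i_{i+1})=b^i_{i+2}=0$, so the closure of $b^1_1$ is all six arrows while the closure of $b^1_2$ is only the $g$-orbit $\{b^1_2,b^2_3,b^3_1\}$; these overlap in arrows without coinciding. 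Conversely, a component can strictly contain a closure: two $g$-orbits of arrows joining the same pair of vertex orbits with no $\sigma$-link between them (e.g.\ $\{b^i_i\}\cup\{b^i_{i+2}\}$ inside $K_{4,4}$ with $\mathbb{Z}_4$ acting) form a single $\mathbb{Z}_n$-component in the sense of Definition~\ref{def:component} but two distinct closures. The paper sidesteps all of this by defining components as \emph{maximal} $\mathbb{Z}_n$-minimal subquivers and arguing that two $\mathbb{Z}_n$-minimal subquivers sharing an arrow necessarily have the same vertex set, so their union is again $\mathbb{Z}_n$-minimal; maximality then forces distinct components to meet only in vertices (Lemma~\ref{lem:Gminl}). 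The restriction lemma then uses only that $\sigma(a)$ joins vertices already in the component, so adjoining it preserves $\mathbb{Z}_n$-minimality, contradicting maximality if $\sigma(a)$ lay elsewhere. Your gluing discussion would survive once the decomposition is repaired, since the verification of the Taft relations genuinely does reduce to generators.

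There is a second gap in the corollary. You argue that faithfulness gives a vertex orbit of size greater than $1$ and that you can then take $\gamma\neq 0$ on that orbit. But Proposition~\ref{prop:TaftactQ0}(i) forces $\gamma=0$ on any vertex orbit of size $m$ with $1\le m<n$; only an orbit of size exactly $n$ admits $\gamma\neq 0$, which is what the paper's proof of Corollary~\ref{cor:non-trivial} invokes. Relatedly, your criterion that a Taft action is inner faithful ``precisely when $g$ acts with order exactly $n$ and $x$ acts nonzero'' overstates Lemma~\ref{lem:Taftfaithful}, which says $x\cdot A\neq 0$ alone is equivalent (and Example~\ref{ex:gluing}'s neighbor, the $T(4)$-action on $\kk K_2$, is inner faithful although $\mathbb{Z}_4$ does not act faithfully on the quiver); this misstatement does not break the implication you actually use, but the orbit-size issue does need the stronger hypothesis.
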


As mentioned above, we  extend these results to get actions of other finite dimensional, pointed Hopf actions on path algebras of quivers.

\begin{theorem}[Theorem~\ref{thm:Uqslaction} and~\ref{thm:Daction}, Section~\ref{sec:glueUqD}] Fix an action of  $\mathbb{Z}_n$ on a quiver $Q$. Let $q \in \kk$ be a $2n$-th root of unity. Additional restraints on parameters are determined so that the Taft actions on the path algebra of $Q$ produced in Theorems~\ref{thm:mainintro} and~\ref{thm:introglue} extend to an action of the Frobenius-Lusztig kernel $\uqsl$ and to an action of the Drinfeld double  of $T(n)$.
\end{theorem}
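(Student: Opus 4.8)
The plan is to exploit that each of $\uqsl$ and $D(T(n))$ is assembled from two Taft-type Hopf subalgebras, only one of which is supplied by the given Taft action; the work is to construct the action of the complementary generators and to extract the parameter constraints that force the remaining defining relations.

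For $\uqsl$, recall that the positive Borel part $\uqslu = \langle K, E\rangle$ is, under the identification of $\zeta$ with the appropriate power of $q$, isomorphic to $T(n)$ via $g \mapsto K$ and $x \mapsto E$ (compatibly with coproducts, since $\Delta(E) = E\otimes 1 + K\otimes E$). Thus a Taft action on $\kk Q$ is exactly an action of $\uqslu$, prescribing how $K$ and $E$ act on vertices and arrows through Theorem~\ref{thm:mainintro}. The negative Borel part $\uqsll = \langle K^{-1}, F\rangle$ is again a Taft-type algebra, with grouplike $K^{-1}$ acting as $g^{-1}$ and skew-primitive $F$ satisfying $\Delta(F) = F\otimes K^{-1} + 1\otimes F$. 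I would therefore posit that $F$ acts on vertices and arrows by formulae of the same shape as in Theorem~\ref{thm:mainintro}(b),(c), but with $g^{-1}$ in place of $g$; in particular the analogue of the map $\sigma$ of Notation~\ref{not:sigma} now sends an arrow with source $s$ and target $t$ to an arrow or trivial path with source $s$ and target $g^{-1}\cdot t$, i.e.\ oriented oppositely along the orbit. This ansatz introduces a second family of scalars parallel to $\alpha,\beta,\lambda,\gamma$.

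Because $\kk Q$ is generated as an algebra by its vertices and arrows, and a module-algebra structure is determined by the action on algebra generators together with the axiom $h\cdot(ab) = \sum (h_{(1)}\cdot a)(h_{(2)}\cdot b)$, the verification reduces to finitely many identities on vertices and arrows. I would (i) check the module-algebra axiom for $F$ via its Leibniz-type coproduct; (ii) check $KF = q^{-2}FK$ and $F^n = 0$ as operators, which follow from applying the Taft classification to $\uqsll$; and (iii) impose the cross-relation
$$EF - FE = \frac{K - K^{-1}}{q - q^{-1}}.$$
Evaluating both sides on each trivial path $e_i$ and each arrow $a$ produces polynomial equations relating the $E$-scalars to the $F$-scalars; these are precisely the ``additional restraints on parameters'' of the statement, and any solution yields a genuine $\uqsl$-module algebra. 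The Drinfeld double $D(T(n))$ is treated by the identical template: it contains $T(n) = \langle g,x\rangle$ together with a second Taft-type Hopf subalgebra $(T(n)^{*})^{\mathrm{cop}} = \langle G,X\rangle$, the Taft action supplies the former, one posits the analogous ansatz for $G$ and $X$, and the straightening (cross-) relations of the double between $\{g,x\}$ and $\{G,X\}$ determine the constraints.

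I expect the main obstacle in both cases to be exactly these cross-relations: the commutator $EF - FE = (K-K^{-1})/(q-q^{-1})$ for $\uqsl$ and the double's straightening relations for $D(T(n))$, since these are the only relations coupling the two Taft actions and hence the only source of genuine constraints. Their verification on arrows is delicate, requiring careful bookkeeping of how the two oppositely oriented $\sigma$-maps and the two families of scalars interact across a single $\mathbb{Z}_n$-orbit, together with tracking the $K^{\pm 1}$-eigenvalues $\zeta^i$ on each $e_i$. Once the relations hold on vertices and arrows for $\mathbb{Z}_n$-minimal quivers, the gluing over $\mathbb{Z}_n$-components established in Theorem~\ref{thm:introglue} extends the conclusion to arbitrary $Q$.
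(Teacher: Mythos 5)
Your proposal is correct and follows essentially the same route as the paper: split each Hopf algebra into its two Taft-type subalgebras (Lemma~\ref{lem:borels} for the Borels of $\uqsl$, Lemma~\ref{lem:Dsubalg} for the double), apply the Taft classification of Theorems~\ref{thm:T(n)typeA} and~\ref{thm:T(n)typeB} to each to get the ansatz for $F$ (resp.\ $G, X$), extract the extra parameter constraints from the cross-relations \eqref{eq:efrelation} and \eqref{eq:xX} evaluated on vertices and arrows, and then glue over $\mathbb{Z}_n$-components via Theorem~\ref{thm:glue}. The only notable detail you do not anticipate is that in the $\uqsl$ case the cross-relation additionally forces $\mu_{i,j}=1$ (equation \eqref{eq:ai-1,j-1}), but this is exactly one of the ``polynomial equations'' your step (iii) would produce.
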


As a consequence of the theorem above, we obtain that path algebras of quivers, that  admit $\mathbb{Z}_n$-symmetry, are {\it algebras} in the category of Yetter-Drinfeld modules over $T(n)$ by \cite{Majid:doubles}; see also \cite[Exercise~13.1.6]{MR2894855}.  Hence, motivated by the process of bosonization, or Radford's biproduct construction to produce (potentially new) Hopf algebras (c.f \cite{Majid} \cite[Theorems~11.6.7 and~11.6.9]{MR2894855}), we pose the following question.

\begin{question}
Let $Q$ be a quiver that admits $\mathbb{Z}_n$-symmetry. When does the path algebra $\kk Q$ admit the structure of a {\it Hopf algebra} in the category of Yetter-Drinfeld modules over $T(n)$?
\end{question}

\subsection{Comparisons to other work}

A path algebra $\kk Q$ is naturally a coalgebra, where the comultiplication of a path is the sum of all splits of the path. There are previous studies on extending the coalgebra structure on $\kk Q$ to a graded Hopf algebra, most notably Cibils and Rosso's work on Hopf quivers \cite{CibilsRosso0, CibilsRosso1}. Here, when $\kk Q$ admits the structure of a Hopf algebra, the group of grouplike elements of $\kk Q$ consists of the vertex set $Q_0$ of $Q$. Moreover, any arrow $a \in Q_1$ is a skew-primitive element as $\Delta(a) = s(a) \otimes a + a \otimes t(a)$. One example of their theory is a construction of $T(n)$ from a Hopf quiver, and in this case it has the regular action on the path algebra of this quiver.  
Our study produces many more examples of Taft actions on path algebras, as our construction allows for non-trivial actions on path algebras of  {\it any} quiver that admits $\mathbb{Z}_n$-symmetry.

There is also some intersection of our work with a recent preprint of Gordienko \cite{Gordienko}. On the one hand, he works in the setting of Taft actions on arbitrary finite dimensional algebras, whereas path algebras of quivers are not always finite dimensional.  For example, in \cite[Theorem~1]{Gordienko} Gordienko classifies Taft algebra actions on products of matrix algebras, while our Proposition~\ref{prop:TaftactQ0} only classifies Taft algebra actions on products of fields (equivalently, path algebras of arrowless quivers). On the other hand, Gordienko's classification in \cite[Theorem 3]{Gordienko} is restricted to actions giving $T(n)$-\emph{simple} module-algebras, whereas we have classified all Taft actions on path algebras (subject to Hypothesis~\ref{hyp:standing}). With the exception of special parameter values, the path algebras in this work are not simple with respect to the Taft algebra action: one can easily see from our explicit formulas that the Jacobson radical (the ideal generated by the arrows of $Q$) is typically a non-trivial two-sided $T(n)$-invariant ideal.

There is an abundance of literature on both the study of quantum symmetry of graphs and group actions on directed graphs from the viewpoint of operator algebras, including \cite{Banica, BBC, BPW, Bichon, KumjianPask}. Connections to our results merit further investigation.

Other works investigating relations between path algebras of quivers and Hopf algebras can be found in the following references 
\cite{MR2047446, MR2609178,  MR2741251, MR2089252, MR2267572}.

Moreover in \cite{MS}, Montgomery-Schneider provide similar results for actions of Taft algebras, and extended actions of  $\uqsl$ and of $D(T(n))$,  on the commutative algebras: $\kk(u)$, and $\kk[u]/(u^n-\beta)$ with $\beta \in \kk$.

%%%%%%%%%%%%%%%%%%%%%%%%%%%%%%%%%%%%%%%
%%%%%%%%%%%%%%%%%%%%%%%%%%%%%%%%%%%%%%%
%%%%%%%%%%%%%%%%%%%%%%%%%%%%%%%%%%%%%%%
\section{Background} \label{sec:background}

We begin by defining Taft algebras and Hopf algebra actions. We then discuss path algebras of quivers, which will be acted on by Taft algebras throughout this work.

\subsection{Taft algebras and Hopf algebra actions} \label{subsec:Hopfactions} 
Let $H$ be a Hopf algebra with coproduct $\Delta$, counit $\varepsilon$, and antipode $S$. 
A nonzero element $g \in H$ is {\it grouplike} if $\Delta(g) = g \otimes g$, and the set of grouplike elements of $H$ is denoted by $G(H)$. This forces $\varepsilon(g) = 1$ and $S(g) = g^{-1}$. An element $x \in H$ is {\it $(g,g')$-skew-primitive}, for grouplike elements $g, g'$ of $H$, when $\Delta(x) = g \otimes x + x \otimes g'$. In this case, $\varepsilon(x) = 0$ and $S(x) = -g^{-1}xg'^{-1}$. 
The following examples of Hopf algebras will be used throughout this work. 

\begin{definition}[Taft algebra $T(n)$, Sweedler algebra $T(2)$] \label{def:Taft} 
The {\it Taft algebra} $T(n)$ is a $n^2$-dimensional Hopf algebra generated by a grouplike element $g$ and a $(1,g)$-skew-primitive element $x$, subject to relations: 
\[g^n =1,\quad x^n = 0, \quad xg = \zeta gx\] 
for $\zeta$ a primitive $n$-th root of unity. The 4-dimensional Taft algebra $T(2)$ is known as the {\it Sweedler algebra}.
\end{definition}

\noindent Note that $G(T(n))$ is isomorphic to the cyclic group $\mathbb{Z}_n$, generated by $g$.

We now recall basic facts about Hopf algebra actions; refer to
\cite{Montgomery} for further details.  
A left $H$-module $M$ has left $H$-action structure map denoted by $\cdot : H \otimes M \rightarrow M$.  We use Sweedler notation $\Delta(h) = \sum h_1 \otimes h_2$ for coproducts.

\begin{definition}[$H$-action] \label{def:Hopfact} Given a Hopf algebra $H$ and an algebra $A$, we say
that {\it $H$ acts on $A$} (from the left) if
\begin{enumerate}
\item $A$ is a left $H$-module,
\item $h \cdot (pq) = \sum (h_1 \cdot p)(h_2 \cdot q)$, and 
\item $h \cdot 1_A = \varepsilon(h) 1_A$ 
\end{enumerate}
for all $h \in H$, and $p,q \in A$.  In this case, we say that $A$ is a {\it left $H$-module algebra}. 
Equivalently, the multiplication map $\mu_A\colon A\otimes A \to A$ and unit map $\eta_A\colon \kk \to A$ are morphisms of $H$-modules, so $A$ is an algebra in the monoidal category of left $H$-modules.
\end{definition}

For the Taft actions in this work, consider the following terminology.

\begin{definition}[Extending a $G$-action] Given an action of a group $G$ on an algebra $A$, we say that an action of a Hopf algebra $H$ on $A$ {\it extends the $G$-action on $A$} if the restriction of the $H$-action to $G(H)$ agrees with the $G$-action via some isomorphism $G(H) \simeq G$.
\end{definition}

In this paper, we are interested in the case where $G=\mathbb{Z}_n$ and $H=T(n)$ in the above definition.  Moreover, it is useful to restrict to $H$-actions that do not factor through proper quotient Hopf algebras.  

\begin{definition}[Inner faithful]
A module $M$ over a Hopf algebra $H$ is \emph{inner faithful} if the action of $H$ on $M$ does not factor through a quotient Hopf algebra of $H$; that is, $IM \neq 0$ for any nonzero Hopf ideal $I \subset H$.
A Hopf action of $H$ on an algebra $A$ is inner faithful is $A$ is inner faithful as an $H$-module.
\end{definition}

The following lemma is likely known to experts, but does not seem to be readily accessible in the literature, so we provide a proof.

\begin{lemma}\label{lem:Taftfaithful}
Every nonzero bi-ideal of $T(n)$ contains $x$.  Therefore, a Taft action on an algebra $A$ is inner faithful if and only if $x \cdot A \neq 0$.
\end{lemma}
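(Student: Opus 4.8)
The plan is to prove the structural assertion first---that every nonzero bi-ideal $I\subseteq T(n)$ contains $x$---and then read off the inner-faithfulness criterion as a formal consequence. Throughout I would work in the monomial basis $\{g^ix^j : 0\le i,j\le n-1\}$ and use the expansion $\Delta(x^j)=\sum_{k=0}^{j}\binom{j}{k}_\zeta\, x^k\otimes g^kx^{j-k}$, where $\binom{j}{k}_\zeta$ is the Gaussian binomial coefficient; this follows from $\Delta(x)=1\otimes x+x\otimes g$ and $xg=\zeta gx$. The first move is to grade $T(n)$ by the conjugation action of the unit $g$. Since $g(g^ix^j)g^{-1}=\zeta^{-j}g^ix^j$ and the eigenvalues $\zeta^{-j}$ are distinct for $0\le j\le n-1$, one gets $T(n)=\bigoplus_j V_j$ with $V_j=\kk[\mathbb{Z}_n]x^j$, and the $q$-binomial formula shows this is simultaneously an algebra and a coalgebra grading. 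As $I$ is a two-sided ideal it is stable under conjugation by $g$, hence homogeneous: $I=\bigoplus_j I_j$ with $I_j=I\cap V_j$, so it suffices to understand the pieces $I_j$, each consisting of elements $p(g)x^j$. (Note no use of the antipode is made, so the result genuinely concerns bi-ideals.)

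The heart of the argument is a degree-lowering step driven by the co-ideal condition $(\pi\otimes\pi)\Delta(I)=0$, where $\pi\colon T(n)\to T(n)/I$ is the quotient coalgebra map. Let $j_0$ be minimal with $I_{j_0}\neq 0$. For $j_0\ge 2$ I would take $0\neq u=p(g)x^{j_0}\in I_{j_0}$ and apply $\operatorname{id}\otimes f_b$ to $\Delta(u)$, where $f_b$ is the coordinate functional dual to $g^bx^1$. Since $I_1=0$ by minimality, $f_b$ annihilates $I$, so the output again lies in $I$; a direct computation shows it is a nonzero scalar multiple of $g^ax^{j_0-1}$, the scalar being $\binom{j_0}{j_0-1}_\zeta=[j_0]_\zeta\neq 0$ because $1\le j_0\le n-1$. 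This contradicts minimality of $j_0$, forcing $j_0\le 1$. The case $j_0=1$ is then immediate: taking $0\neq u=p(g)x\in I_1$ and applying $f_a\otimes\operatorname{id}$ with $f_a$ dual to $g^a$ (which annihilates $I$ since $I_0=0$) extracts $g^ax\in I$ for some $a$ with $c_a\ne0$, and multiplying by the unit $g^{-a}$ gives $x\in I$.

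The remaining base case $j_0=0$, where $I_0=I\cap\kk[\mathbb{Z}_n]\neq0$, is where the real content lies, and I expect it to be the main obstacle: in degree $0$ one cannot lower the $x$-degree by a co-ideal functional, because the coproduct of group-likes only reshuffles group-likes. Here I would instead extract a group-like difference. Writing $0\neq u=\sum_i c_ig^i\in I_0$, the identity $(\pi\otimes\pi)\Delta(u)=0$, together with the facts that each $\pi(g^i)$ is group-like (nonzero since $\varepsilon\pi(g^i)=1$) and that distinct group-likes are linearly independent, forces $g^a-g^b\in I$ for some $a\neq b$; after multiplying by $g^{-b}$ we may assume $g^c-1\in I$ with $c\not\equiv 0 \pmod n$. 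The decisive computation is then
\[
x(g^c-1)-\zeta^c(g^c-1)x=(\zeta^c-1)x\in I,
\]
using $xg^c=\zeta^cg^cx$; since $\zeta^c\neq1$ this yields $x\in I$. Thus it is the relation $xg=\zeta gx$, not the coalgebra structure alone, that forces $x$ into $I$.

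Finally I would deduce the equivalence formally. The ideal $(x)$ is a nonzero Hopf ideal, with $T(n)/(x)\cong\kk[\mathbb{Z}_n]$, and $x\cdot A=0$ implies $(x)\cdot A=0$ since every element of $(x)$ has the form $\sum_k u_kxv_k$ and $(u_kxv_k)\cdot a=u_k\cdot\bigl(x\cdot(v_k\cdot a)\bigr)=0$. Conversely, any nonzero Hopf ideal is in particular a nonzero bi-ideal, hence contains $x$ by the structural statement. Therefore a Taft action on $A$ fails to be inner faithful---i.e.\ $I\cdot A=0$ for some nonzero Hopf ideal $I$---if and only if $(x)\cdot A=0$, that is, if and only if $x\cdot A=0$; equivalently, the action is inner faithful precisely when $x\cdot A\neq0$.
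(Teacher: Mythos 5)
Your proof is correct, but it takes a genuinely different route from the paper's. The paper invokes the self-duality $T(n)\cong T(n)^*$ to convert the claim into the dual statement that every proper sub-bialgebra of $T(n)$ is contained in the coradical, and then extracts the generators $g$ and $x$ from such a sub-bialgebra by iterating the coproduct on a term of maximal $x$-degree. You instead work directly with the bi-ideal $I$: you use the $\operatorname{Ad}_g$-eigenspace decomposition $T(n)=\bigoplus_j \kk[\mathbb{Z}_n]x^j$ to show $I$ is homogeneous, lower the minimal $x$-degree $j_0\ge 2$ by pairing $\Delta$ against a coordinate functional that kills $I$, handle $j_0=1$ by projecting onto the degree-$0$ tensor factor, and handle $j_0=0$ by combining linear independence of grouplikes in $T(n)/I$ with the commutator identity $x(g^c-1)-\zeta^c(g^c-1)x=(\zeta^c-1)x$. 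Your argument has the advantage of being self-contained (it does not rely on the unproved self-duality isomorphism, and it visibly never uses the antipode, so it really is a statement about bi-ideals), at the cost of a longer case analysis; the paper's dualization is shorter but outsources the key structural input to the isomorphism $T(n)\cong T(n)^*$. All the individual steps in your write-up check out: the eigenvalues $\zeta^{-j}$ of $\operatorname{Ad}_g$ are distinct, the functionals $f_b$ do annihilate $I$ under the stated minimality hypotheses so that $(\operatorname{id}\otimes f_b)\Delta(I)\subseteq I$, the coefficient $[j_0]_\zeta$ is nonzero for $1\le j_0\le n-1$, and the final reduction from Hopf ideals to the principal ideal $(x)$ is the same formal argument as in the paper.
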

\begin{proof}
Writing $H:=T(n)$, since $H \cong H^*$ as Hopf algebras it suffices to prove the dual statement.  Namely, since $x$ generates the radical of $H$, the dual approach is to show that every proper sub-bialgebra of $H$ is contained in the coradical of $H$.

Suppose that $A \subseteq H$ is a nonzero Hopf sub-bialgebra of $H$ which is not contained in the coradical of $H$.  We will show that $A=H$.
Since the coradical $H_0$ of $H$ is the span of the grouplike elements $\{g^i \mid i=0, \dotsc, n-1\}$, we have that $A$ contains a nonzero element  $f =  hx^j +$ (terms of lower $x$-degree), where $j \geq 1$ and $h \in H_0$. Say, $h = \sum_{d=0}^{i} \nu_d g^d$, for $\nu_d \in \kk$ with $\nu_i \neq 0$. Since $A$ inherits the coproduct from $H$,
$$\Delta(\nu_i g^i x^j) = \sum_{\ell = 0}^j \left[\begin{array}{c}j \\ \ell\end{array}\right]_{\zeta} \nu_i g^i x^{j-\ell} \otimes \nu_i g^{j-\ell+i}x^\ell,$$
which is in $A \otimes A$. Here, the equality above holds by \cite[Lemma~7.3.1]{MR2894855}. By the maximality of $j$ and $i$ and by taking $\ell =0$ above, we have $f \in A$ implies that $g^i x^j \in A$.  Now applying $\Delta$ to $g^i x^j$ yields $g^i x^{j-\ell} \in A$ for $\ell = 0, \dots, j$. So, we get $g^ix \in A$. Likewise, apply $\Delta$ to $g^i x$ to conclude that $g^i, g^{i+1} \in A$. Since $g$ has finite multiplicative order, $g^{-i} \in A$ as well. Thus, both $g$ and $x$ are in $A$, so $A = H$, as desired.\end{proof}

So the extension of a faithful cyclic group ($\mathbb{Z}_n$) action on an algebra $A$ to a Taft algebra ($T(n)$) action is inner faithful if and only if $x \cdot A \neq 0$.
To study module algebras of $T(n)$, the following standard fact will also be of use.

\begin{lemma} \label{lem:oppositeaction} For each $T(n)$-action on a algebra $A$, there is a natural action of $T(n)$ on the opposite algebra ($A^{op}$, $\ast$), say denoted by $\diamond$, as follows:
\begin{equation} \label{eq:oppact}
g\diamond p = g^{-1} \cdot p \quad \text{and} \quad x\diamond p = g^{-1}x \cdot p.
\end{equation}
This gives a bijection between $T(n)$-actions on $A$ and on $A^{op}$.
\end{lemma}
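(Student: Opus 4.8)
The plan is to reformulate a Taft action in terms of two operators and then transport that data to the opposite algebra. Writing $G := (g\cdot -)$ and $X := (x\cdot -)$ for the operators on $A$ induced by the generators, the module-algebra axioms of Definition~\ref{def:Hopfact}, together with the grouplike and $(1,g)$-skew-primitive structure of $g$ and $x$, show that a Taft action on $A$ is the same as a pair $(G,X)$ such that: $G$ is an algebra automorphism of $A$ with $G^n = \id$; $X$ is an $(\id,G)$-skew derivation, i.e. $X(pq) = X(p)\,G(q) + p\,X(q)$ with $X(1)=0$; and $XG = \zeta GX$ with $X^n = 0$. I would then define operators on the same underlying space by $\tilde G := G^{-1}$ and $\tilde X := G^{-1}X$ — these are precisely $(g\diamond -)$ and $(x\diamond -)$ — and verify that $(\tilde G,\tilde X)$ is the corresponding data for a Taft action on $A^{op}$.

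The verifications split into routine and delicate parts. The routine checks are that $\tilde G = G^{-1}$ is an algebra automorphism of $A^{op}$ of order $n$ (immediate, since reversing multiplication commutes with applying an automorphism), that $\tilde X(1) = G^{-1}X(1) = 0$, and that $\tilde X^n = (G^{-1}X)^n = 0$, the last obtained by using $XG = \zeta GX$ to normal-order so that $(G^{-1}X)^n$ is a scalar multiple of $G^{-n}X^n = 0$. The main computation is the skew-derivation identity on $A^{op}$: since the product of $A^{op}$ is $p\ast q = qp$ and $x$ is $(1,g)$-skew-primitive, the required identity is $\tilde X(a\ast b) = a\ast \tilde X(b) + \tilde X(a)\ast \tilde G(b)$, which unwinds to $\tilde X(ba) = \tilde X(b)\,a + G^{-1}(b)\,\tilde X(a)$. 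I would verify this by applying $G^{-1}$ to the skew-Leibniz rule for $X$ and using that $G^{-1}$ is an algebra automorphism of $A$; this is exactly the step that forces the twist by $G^{-1}$ in the definition of $\tilde X$.

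The step I expect to be the main obstacle is the commutation relation. A direct calculation with $XG = \zeta GX$ gives $\tilde X\tilde G = G^{-1}XG^{-1} = \zeta^{-1}G^{-2}X = \zeta^{-1}\tilde G\tilde X$; that is, $(\tilde G,\tilde X)$ satisfies the Taft relation with $\zeta$ replaced by $\zeta^{-1}$. The point requiring care is therefore that the opposite-action operators realize the Taft algebra built from $\zeta^{-1}$ rather than from $\zeta$. Since $\zeta^{-1}$ is again a primitive $n$-th root of unity, this still defines an action of an isomorphic copy of $T(n)$, and I would record this identification explicitly so that the conclusion is literally a $T(n)$-action; this is the one place where the statement needs a word of justification beyond bookkeeping.

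Finally, to obtain the claimed bijection I would show the construction is an involution. Applying the same recipe to $(A^{op},\diamond)$ produces the operators $(\tilde G)^{-1} = G$ and $(\tilde G)^{-1}\tilde X = G\,G^{-1}X = X$ on $(A^{op})^{op} = A$, recovering the original action $\cdot$. Hence the assignment is its own inverse, and in particular it is a bijection between Taft actions on $A$ and Taft actions on $A^{op}$.
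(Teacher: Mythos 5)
Your route is a direct operator-level verification, whereas the paper first shows that $A$ is an $H$-module algebra if and only if $A^{op}$ is an $H^{cop}$-module algebra with the same structure maps, and then invokes a claimed Hopf isomorphism $T(n) \cong T(n)^{cop}$ sending $g \mapsto g^{-1}$, $x \mapsto g^{-1}x$. Your routine checks are all correct: $G^{-1}$ is an algebra automorphism of $A^{op}$ of order $n$; applying $G^{-1}$ to $X(ba) = b\,X(a) + X(b)\,G(a)$ gives exactly the skew-Leibniz identity required on $A^{op}$; $(G^{-1}X)^n$ is a root-of-unity multiple of $G^{-n}X^n = 0$; and the construction is an involution, hence a bijection.

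The gap sits precisely at the step you flag as delicate, and your proposed repair does not work. You correctly compute $\tilde X \tilde G = \zeta^{-1}\tilde G \tilde X$, so the pair $(\tilde G, \tilde X)$ realizes the Taft algebra $T(n,\zeta^{-1})$ in the sense of Notation~\ref{not:q}, not $T(n) = T(n,\zeta)$. But $T(n,\zeta^{-1})$ is \emph{not} isomorphic to $T(n,\zeta)$ as a Hopf algebra unless $\zeta = \zeta^{-1}$, i.e.\ unless $n=2$: any Hopf isomorphism $T(n,q) \to T(n,q')$ must send $g$ to the unique grouplike $g'$ for which the space of $(1,g')$-skew-primitives is two-dimensional, and then send $x$ to $\alpha x' + \beta(1-g')$ with $\alpha \neq 0$; applying such a map to the relation $xg = qgx$ forces $q = q'$. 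So there is no ``identification to record,'' and what your construction (and the formula \eqref{eq:oppact}) literally produces is an action of $T(n,\zeta^{-1}) \cong T(n)^{cop}$ on $A^{op}$, hence a bijection between $T(n,\zeta)$-actions on $A$ and $T(n,\zeta^{-1})$-actions on $A^{op}$. Note that the paper's own proof glosses over the same point: the map $g \mapsto g^{-1}$, $x \mapsto g^{-1}x$ respects the co-opposite coproduct but sends $xg$ to $g^{-1}xg^{-1} = \zeta^{-1}g^{-2}x$ while sending $\zeta gx$ to $\zeta g^{-2}x$, so it is an isomorphism $T(n,\zeta^{-1}) \to T(n)^{cop}$ rather than $T(n) \to T(n)^{cop}$. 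For $n=2$, where $\zeta = -1 = \zeta^{-1}$ and the lemma is actually invoked in Remark~\ref{rem:IIItoIV}, everything is fine; for $n \geq 3$ the conclusion should be weakened to a bijection with $T(n)^{cop}$-actions (equivalently $T(n,\zeta^{-1})$-actions), or a different bijection must be supplied.
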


\begin{proof}
For any Hopf algebra $H$ and algebra $A$, we get that $A$ is an $H$-module algebra if and only if ($A^{op}$, $\ast$) is an $H^{cop}$-module algebra. Here, $H^{cop}$ is the co-opposite algebra of $H$ and $\Delta^{cop} = \tau \circ \Delta$ with $\tau(\sum h_1 \otimes h_2) = \sum h_2 \otimes h_1$. Indeed, $h \cdot (pq) = h \cdot (q \ast p) = \sum (h_2 \cdot q) \ast (h_1 \cdot p) = \sum (h_1 \cdot p)(h_2 \cdot q)$. The map sending $g$ to $g^{-1}$ and $x$ to $g^{-1} x$ gives an isomorphism $T(n) \cong T(n)^{cop}$ as Hopf algebras, and the result follows.
\end{proof}

\subsection{Path algebras of quivers}\label{sec:pathalgebra}
A {\it quiver} is another name for a directed graph, in the context where the directed graph is used to define an algebra.  Here, we review the basic notions and establish notation. More detailed treatments can be found in the texts \cite{assemetal, Schiffler:2014aa}. Formally, a quiver $Q=(Q_0, Q_1, s, t)$ consists of a set of {\it vertices} $Q_0$, a set of {\it arrows} $Q_1$, and two functions $s, t\colon Q_1 \to Q_0$ giving the {\it source} and {\it target} of each arrow, visualized as
\[
s(a) \xrightarrow{\quad a \quad} t(a).
\]
A {\it path} $p$ in $Q$ is a sequence of arrows $p=a_1 a_2 \cdots a_\ell$ for which $t(a_i)=s(a_{i+1})$ for $1 \leq i \leq \ell-1$.  (Note that we read paths in left-to-right order.)  The length of $p$ is the number of arrows $\ell$.  There is also  a length 0 {\it trivial path} $e_i$ at each vertex $i \in Q_0$, with $s(e_i) = t(e_i) = i$.

A quiver $Q$ has a {\it path algebra} $\kk Q$ whose basis consists of all paths in $Q$, and multiplication of basis elements is given by composition of paths whenever it is defined, and 0 otherwise.  More explicitly, we have the following definition.

\begin{definition}[Path algebra] The {\it path algebra} $\kk Q$ of a quiver $Q$ is the $\kk$-algebra presented by generators from the set $Q_0 \sqcup Q_1$ with the following relations:
\begin{enumerate}
\item $\sum_{i \in Q_0} e_i = 1$;
\item $e_i e_j = \delta_{ij} e_i$ for all $e_i, e_j \in Q_0$; and
\item $a = e_{s(a)}a = ae_{t(a)}$ for all $a \in Q_1$.
\end{enumerate}
\end{definition}

Condition (a) is due to the assumption that $|Q_0|<\infty$. Further, $e_i$ is a primitive orthogonal idempotent in $\kk Q$ for all $i$.
So, $\kk Q$ is an associative algebra with unit, which is finite dimensional if and only if $Q$ has no path of positive length which starts and ends at the same vertex.
Notice that $\kk Q$ is naturally filtered by path length. Namely, if we let $F_i$ be the subspace of $\kk Q$ spanned by paths of length at most $i$, then we have that $F_i \cdot F_j \subseteq F_{i+j}$.

\begin{definition}[Schurian] \label{def:Schurian}
A quiver $Q$ is said to be {\it Schurian} if, given any two vertices $i$ and $j$, there is at most one arrow which starts at $i$ and ends at $j$.  
\end{definition}
Note that the definition above does not exclude oriented 2-cycles.

\begin{definition}[Covering, Gluing, $\circledast$]
A quiver $Q$ is \emph{covered by} a collection of subquivers $Q^1, \dotsc, Q^r$ if $Q = \bigcup_i Q^i$.  We say $Q$ is obtained by \emph{gluing} the collection $Q^1, \dotsc, Q^r$ if the collection covers $Q$, and in addition $Q^i \cap Q^j$ consists entirely of vertices when $i \neq j$; in this case, write
$$Q = Q^1 \circledast \cdots \circledast Q^r.$$
\end{definition}

\begin{remark} If a quiver $Q$ is obtained by gluing subquivers $Q^1, \dots, Q^r$, then we get that $\kk Q$ is the factor of the free product of path algebras $\kk Q^1 \ast \cdots \ast \kk Q^r$ by the ideal generated by $\{e_{i,v} - e_{j,v}\}$, where for each pair $(i,j)$, the index $v$ varies over the vertices of $Q^i \cap Q^j$. Here,  $e_{\ell,v}$ indicates the trivial path at $v$, for $v \in (Q^\ell)_0$.
\end{remark}

\subsection{Group actions on path algebras of quivers}

Now we consider group actions on quivers and on their path algebras.

\begin{definition}[Quiver automorphism] \label{def:quiveriso}
Let $Q, Q'$ be quivers, and consider two maps of sets $f_0 \colon Q_0 \to Q'_0$  and $f_1 \colon Q_1\to Q'_1$.  

\begin{enumerate}[(1)]
\item We say that the pair $f = (f_0, f_1)$ is a {\it quiver isomorphism} if each of these maps is bijective, and they form a commuting square with the source and target operations.  That is, for all $a \in Q_1$ we have
$$(f_0 \circ s)(a) = (s \circ f_1)(a) \quad \text{and} \quad (f_0 \circ t)(a) = (t \circ f_1)(a).$$ 
\item We say that the pair $f = (f_0, f_1)$ is a {\it quiver automorphism of $Q$} if $f$ satisfies (1) and $Q'=Q$.
\item A group $G$ is said to {\it act on} $Q$, or $Q$ is {\it $G$-stable}, if $G$ acts on the sets $Q_0$ and $Q_1$ such that each element of $G$ acts by a quiver automorphism. 
\end{enumerate}
\end{definition}

\begin{remark} A quiver automorphism induces an automorphism of its path algebra, but not every automorphism of a path algebra is of this form. See Lemma \ref{lem:GactkQ}.
\end{remark}

\begin{remark} \label{rem:Taftopp}
Given a quiver $Q$, we can form the \emph{opposite quiver} $Q^{\rm op}$ by interchanging the source and target functions $s$ and $t$.  It is clear from the definition of the path algebra that $\kk(Q^{\rm op}) \cong (\kk Q)^{\rm op}$.  Hence, Lemma~\ref{lem:oppositeaction} implies there is a bijection between $T(n)$-actions on $\kk Q$ and on $\kk Q^{\rm op}$ given by \eqref{eq:oppact}. See Remark~\ref{rem:IIItoIV} for an illustration.
\end{remark}

%%%%%%%%%%%%%%%%%%%%%%%%%%%%%%%%%%%%%%%
%%%%%%%%%%%%%%%%%%%%%%%%%%%%%%%%%%%%%%%
%%%%%%%%%%%%%%%%%%%%%%%%%%%%%%%%%%%%%%%
\section{Preliminary Results} \label{sec:prelim}

In this section, we present preliminary results on Taft actions on path algebras of loopless, Schurian quivers. We begin by studying actions on vertices, first giving a simple lemma regarding group actions on $\kk Q_0$ [Lemma~\ref{lem:GactQ0}].  Then, we extend this result to classifying Taft actions on $\kk Q_0$ [Proposition~\ref{prop:TaftactQ0}]. Preliminary results on Taft actions on path algebras $\kk Q$ are provided [Proposition~\ref{prop:Taftactarrow}], along with an example in the case where $Q_0$ is fixed by the group of grouplike elements of $T(n)$ [Examples~\ref{ex:GfixesQ0}].

The following lemma is elementary, but we provide the details in any case.

\begin{lemma} [$G$-action on $\kk Q_0$] \label{lem:GactQ0} 
Let $G$ be a group,  let $Q_0$ be a set of vertices, and let $\{e_i\}_{i \in Q_0}$ be the corresponding primitive orthogonal idempotents in $\kk Q_0$.  Then, any $G$-action on the set $Q_0$ induces an action on the ring $\kk Q_0$, given by $g \cdot e_i = e_{g\cdot i}$ for each $i \in Q_0$ and $g \in G$.
Moreover, every $G$-action on $\kk Q_0$ arises in this way.
\end{lemma}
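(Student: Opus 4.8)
The plan is to prove the two assertions of Lemma~\ref{lem:GactQ0} separately. The \emph{first} assertion is that the formula $g \cdot e_i = e_{g \cdot i}$ genuinely defines a $G$-action on the ring $\kk Q_0$; the \emph{second} (the ``moreover'') is that every $G$-action on $\kk Q_0$ arises this way. Throughout I will exploit the key structural fact that $\kk Q_0 \cong \prod_{i \in Q_0} \kk$ is a product of fields, whose primitive orthogonal idempotents $\{e_i\}$ are \emph{canonical}: they are exactly the primitive idempotents of the ring, so any ring automorphism must permute them.

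For the first assertion, I would first check that the given formula, extended $\kk$-linearly, is well defined on the basis $\{e_i\}$ and respects the group law, i.e. $g \cdot (h \cdot e_i) = e_{g \cdot (h \cdot i)} = e_{(gh)\cdot i} = (gh)\cdot e_i$, and that the identity of $G$ acts trivially; this is immediate from the fact that $G$ acts on the set $Q_0$. Next I must verify that each $g$ acts by a \emph{ring} automorphism, not merely a linear map. Since $g$ acts as a permutation of the idempotents, it preserves the defining relations of $\kk Q_0$: we have $g \cdot (e_i e_j) = g \cdot (\delta_{ij} e_i) = \delta_{ij} e_{g\cdot i}$, while $(g \cdot e_i)(g \cdot e_j) = e_{g\cdot i} e_{g \cdot j} = \delta_{g\cdot i,\, g \cdot j}\, e_{g\cdot i}$, and these agree because $g$ acts injectively on $Q_0$, so $g \cdot i = g \cdot j$ iff $i = j$. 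Finally $g \cdot 1 = g \cdot \sum_i e_i = \sum_i e_{g\cdot i} = \sum_i e_i = 1$, since $g$ permutes $Q_0$. Hence each $g$ gives an algebra automorphism and we have a genuine $G$-action.

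For the second assertion, I would start from an arbitrary $G$-action on the ring $\kk Q_0$ and recover the underlying set action on $Q_0$. The crucial observation is that the $e_i$ are precisely the primitive idempotents of $\kk Q_0$ (equivalently, the indecomposable factors in the decomposition $\kk Q_0 \cong \prod_i \kk$), and this property is preserved by any ring automorphism: if $\varphi$ is a ring automorphism then $\varphi(e_i)$ is again a primitive idempotent, hence equals some $e_{j}$. Writing $g \cdot e_i = e_{\rho(g)(i)}$ defines, for each $g \in G$, a map $\rho(g)\colon Q_0 \to Q_0$; the orthogonality relations force $\rho(g)$ to be a bijection (distinct idempotents go to distinct idempotents, and $\kk$-dimension counting shows surjectivity), and the fact that the $\varphi$'s form a $G$-action forces $\rho(gh) = \rho(g)\rho(h)$. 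Thus $\rho$ is a $G$-action on the set $Q_0$, and by construction the original ring action is the one induced by $\rho$ via the first part.

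The main obstacle, though it is a mild one, is the ``moreover'' direction: one must argue carefully that an \emph{arbitrary} ring automorphism of $\kk Q_0$ permutes the canonical idempotents $\{e_i\}$. This rests on the intrinsic characterization of the $e_i$ as the primitive idempotents, which holds because $\kk Q_0$ is a finite product of copies of $\kk$ and such a ring has a unique complete set of primitive orthogonal idempotents. I would make this explicit (e.g.\ any primitive idempotent $f$ satisfies $f = \sum_i f e_i$ with each $f e_i \in \kk e_i$ an idempotent, forcing $f = e_i$ for a single $i$), and then the rest of the second direction is bookkeeping. No serious analytic or combinatorial difficulty arises; the content is entirely in identifying the idempotents as the invariant combinatorial data.
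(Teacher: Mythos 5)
Your proposal is correct and follows essentially the same route as the paper: the first assertion is a direct verification, and the ``moreover'' direction rests on the observation that the $e_i$ are intrinsically characterized inside $\kk Q_0 \cong \kk \times \cdots \times \kk$ (the paper phrases this as uniqueness of the complete collection of primitive orthogonal idempotents, you as the $e_i$ being precisely the primitive idempotents), so any ring automorphism must permute them. Your added details, including the explicit argument that a primitive idempotent $f = \sum_i f e_i$ must equal a single $e_i$, are sound and merely flesh out what the paper leaves as ``clear.''
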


\begin{proof} 
The first statement is clear, so suppose for the converse that we have a $G$-action on $\kk Q_0 \simeq \kk \times \kk \times \cdots \times \kk$.  To act as a ring automorphism, each element of $G$ must send a complete collection of primitive orthogonal idempotents to another such collection.  But in this case, the set $\{e_i\}_{i \in Q_0}$ is the unique such collection.
So this set must be permuted by $G$, defining an action of $G$ on the set $Q_0$.
\end{proof}

Now we turn our attention to group actions on arbitrary path algebras of quivers.

\begin{lemma}[$G$-action on $\kk Q$] \label{lem:GactkQ}
Let $G$ be a group and $Q$ a quiver, and suppose that $G$ acts by automorphisms of $\kk Q$, preserving the path length filtration.  Then, the action of each $g \in G$ on $Q$  is given by
\begin{enumerate}[(i)]
\item a quiver automorphism $\phi \colon Q \to Q$, along with
\item a collection of nonzero scalars $\mu_a \in \kk^{\times}$ indexed by the arrows $a$ of $Q$,
\end{enumerate}
such that $g\cdot a = \mu_a \phi(a)$ for each $a \in Q_1$.  (To be clear, both $\phi$ and the collection $\mu_a$ depend on $g$.)
\end{lemma}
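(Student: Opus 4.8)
The plan is to extract the quiver automorphism and the scalars one filtration degree at a time, using the idempotent relations in $\kk Q$ to pin down where arrows can be sent. Since every element of $G$ preserves the path length filtration and $g^{-1}\in G$, we have $g\cdot F_0=F_0$, so the action restricts to a $G$-action on $F_0=\kk Q_0$ by ring automorphisms. Lemma~\ref{lem:GactQ0} then yields a permutation $\phi_0$ of $Q_0$ with $g\cdot e_i=e_{\phi_0(i)}$ for all $i$; this will be the vertex part of the sought automorphism.

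Next I would determine the image of an arrow. Fix $a\in Q_1$ with $s(a)=i$ and $t(a)=j$, and use the defining relation $a=e_i\,a\,e_j$. Because $g$ is an algebra automorphism preserving the filtration, $g\cdot a=(g\cdot e_i)(g\cdot a)(g\cdot e_j)=e_{\phi_0(i)}\,(g\cdot a)\,e_{\phi_0(j)}$ lies in $F_1$ and, expanded in the path basis, is therefore a linear combination of length-$\le 1$ paths with source $\phi_0(i)$ and target $\phi_0(j)$. This is the step where the hypotheses on $Q$ do the real work, and I expect it to be the crux: looplessness forces $i\neq j$, hence $\phi_0(i)\neq\phi_0(j)$, which eliminates any trivial-path (length $0$) term; and the Schurian condition [Definition~\ref{def:Schurian}] leaves at most one arrow from $\phi_0(i)$ to $\phi_0(j)$. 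Injectivity of the automorphism gives $g\cdot a\neq 0$, so that arrow genuinely exists in $Q$ and occurs with a nonzero coefficient. Thus $g\cdot a=\mu_a\,\phi_1(a)$ for a unique arrow $\phi_1(a)$ and a scalar $\mu_a\in\kk^{\times}$.

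Finally I would check that $\phi=(\phi_0,\phi_1)$ is a quiver automorphism in the sense of Definition~\ref{def:quiveriso}. By construction $s(\phi_1(a))=\phi_0(s(a))$ and $t(\phi_1(a))=\phi_0(t(a))$, so $\phi$ commutes with source and target; and $\phi_1$ is injective because equal images force equal source-target pairs, whence equal arrows by the Schurian condition together with bijectivity of $\phi_0$. For surjectivity of $\phi_1$ I would run the identical analysis for $g^{-1}\in G$ to obtain a map $\psi_1$ and scalars $\nu_b$, and then apply $g$ to $g^{-1}\cdot b$ and compare basis elements: this shows every arrow $b$ equals some $\phi_1(c)$ and recovers the reciprocal scalar relation $\nu_b\,\mu_c=1$. (Alternatively, finiteness of $Q$ makes injectivity of $\phi_1$ immediately force surjectivity.) Assembling these pieces gives the claimed description $g\cdot a=\mu_a\,\phi(a)$ with $\phi$ a quiver automorphism and each $\mu_a\in\kk^{\times}$.
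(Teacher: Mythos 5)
Your proposal is correct and follows essentially the same route as the paper: restrict to $\kk Q_0$ via the filtration and Lemma~\ref{lem:GactQ0}, then use the relations $a = e_{s(a)}a = ae_{t(a)}$ to confine $g\cdot a$ to paths with prescribed source and target, and invoke the Schurian condition to isolate a unique arrow. You are somewhat more explicit than the paper in ruling out a trivial-path summand via looplessness and in verifying bijectivity of the arrow map, but these are details the paper's proof treats as immediate, not a different argument.
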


\begin{proof}
Since $G$ preserves the path length filtration, it acts by permutations on the vertex set, by Lemma~\ref{lem:GactQ0}.
Then for each $g \in G$ and $a \in Q_1$, we have that $g\cdot a = g \cdot (e_{s(a)} a) = (g\cdot e_{s(a)})(g\cdot a),$
showing that $g\cdot a$ lies in the span of arrows starting at $g\cdot e_{s(a)}$.  Similarly, we see that $g\cdot a$ lies in the $\kk$-span of arrows with target $g\cdot e_{t(a)}$.  Since $Q$ is Schurian, this determines a unique arrow $\phi(a)$, with start $s(g \cdot a) = g \cdot e_{s(a)}$ and target $t(g \cdot a) = g \cdot e_{t(a)}$, such that $g\cdot a$ is a scalar times $\phi(a)$.  It is immediate from the definition of $\phi$ that $\phi$ is a quiver automorphism.
\end{proof}

\begin{convention}
We sometimes use $g\cdot a$ to label an arrow in a diagram, or refer to $g\cdot a$ as an arrow in exposition, in order to avoid introducing the extra notation $\phi$.  In these cases, it is understood that one must actually multiply by a scalar to get an arrow on the nose.
\end{convention}

Next, we study the action of skew-primitive elements on arrowless quivers $Q_0$, which then leads to Taft actions on the semisimple algebra $\kk Q_0$. Since the generator $x$ of $T(n)$ is $(1,g)$-skew-primitive,  the relation $e_i^2 = e_i$ of $\kk Q_0$ gives us that
\begin{equation} \label{eq:spanei}
x\cdot e_i= x \cdot (e_i^2) = e_i (x\cdot e_i) + (x\cdot e_i) (g \cdot e_i) \in {\rm span}_{\kk}\{e_i, g\cdot e_i\}.
\end{equation}
So, to study extensions of a $G$-action on $\kk Q_0$ to a $T(n)$-action, we can restrict ourselves to a single $G$-orbit of vertices.  From here on, we apply the above results to the case where $G=G(T(n))$ is the cyclic group generated by $g \in T(n)$, which we identify with $\mathbb{Z}_n$.

\begin{proposition} [$T(n)$-actions on $\kk Q_0$] \label{prop:TaftactQ0} 
Let $Q_0 = \{1, 2, \dotsc, m\}$ be the vertex set of a quiver, where $m$ divides $n$, and  $\mathbb{Z}_n$ acts on $\kk Q_0$ by $g\cdot e_i = e_{i+1}$. Here, subscripts are always interpreted modulo $m$.
\begin{enumerate}[(i)]
\item If $m < n$ (so the $\mathbb{Z}_n$-action on $Q_0$ is not faithful), then $x$ acts on $\kk Q_0$ by 0.
\item If $m=n$ (so $\mathbb{Z}_n$ acts faithfully on $Q_0$), then the action of $x$ on $\kk Q_0$ is exactly of the form
\begin{equation}\label{eq:xonvertex}
x \cdot e_i = \gamma \zeta^i (e_i - \zeta e_{i+1}) \text{\quad \quad for all $i$},
\end{equation}
where $\gamma \in \kk$ can be any scalar. 
\end{enumerate}
In particular, we can extend the action of $\mathbb{Z}_n$ on $\kk Q_0$ to an inner faithful action of $T(n)$ on $\kk Q_0$  if and only if $m = n$.
\end{proposition}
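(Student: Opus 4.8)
The plan is to first pin down the possible actions of $x$ (necessity), then check that each allowed formula genuinely extends to a $T(n)$-action (sufficiency), and finally read off the ``In particular'' clause from Lemma~\ref{lem:Taftfaithful}. By \eqref{eq:spanei} I may write $x \cdot e_i = \alpha_i e_i + \beta_i e_{i+1}$ for scalars $\alpha_i, \beta_i \in \kk$, all subscripts mod $m$; the whole problem is to constrain the $\alpha_i$ and $\beta_i$. The two inputs I will use are the Taft relation $xg = \zeta gx$ and the unit axiom $x \cdot 1_{\kk Q_0} = \varepsilon(x)\,1 = 0$.

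First I would extract recurrences from $xg = \zeta gx$. Applying both sides to $e_i$ and using $g \cdot e_i = e_{i+1}$ gives $\alpha_{i+1}e_{i+1} + \beta_{i+1}e_{i+2} = \zeta \alpha_i e_{i+1} + \zeta \beta_i e_{i+2}$. For $m \geq 2$ the idempotents $e_{i+1}, e_{i+2}$ are distinct, so comparing coefficients yields $\alpha_{i+1} = \zeta\alpha_i$ and $\beta_{i+1} = \zeta\beta_i$; equivalently $\alpha_i = \gamma \zeta^i$ and $\beta_i = \delta\zeta^i$ for the constants $\gamma := \alpha_i\zeta^{-i}$ and $\delta := \beta_i\zeta^{-i}$ (the degenerate case $m=1$ is handled directly and forces $x\cdot e_1 = 0$). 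Running the recurrence once around the cycle gives $\alpha_i = \alpha_{i+m} = \zeta^m\alpha_i$, so $(\zeta^m - 1)\gamma = 0$, and likewise $(\zeta^m-1)\delta = 0$. When $m < n$ we have $\zeta^m \neq 1$ since $\zeta$ is a primitive $n$-th root of unity, hence $\gamma = \delta = 0$ and $x$ acts by $0$; this proves (i). When $m=n$ the factor $\zeta^m - 1$ vanishes and imposes no condition, so $\gamma$ is free; here I then invoke the unit axiom. Expanding $0 = x\cdot 1 = \sum_i (\alpha_i e_i + \beta_i e_{i+1})$ and comparing the coefficient of each (now distinct) $e_j$ gives $\alpha_j + \beta_{j-1} = 0$, i.e. $\beta_i = -\alpha_{i+1} = -\gamma\zeta^{i+1}$. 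Substituting back yields $x \cdot e_i = \gamma\zeta^i(e_i - \zeta e_{i+1})$, which is exactly \eqref{eq:xonvertex}, establishing necessity in (ii).

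For sufficiency I must show that for $m=n$ and \emph{every} $\gamma \in \kk$ the formula \eqref{eq:xonvertex} really comes from a $T(n)$-module algebra structure; since $g^n=1$ and $xg=\zeta gx$ are immediate, the only substantive relation to check is $x^n = 0$, and I regard this as the main obstacle. The cleanest route is a change of variable. As $\kk$ contains the primitive $n$-th root $\zeta$ and $\charac\kk \nmid n$, the element $u := \sum_i \zeta^{-i}e_i$ generates $\kk Q_0$, satisfies $u^n = 1$, and identifies $\kk Q_0 \cong \kk[u]/(u^n-1)$; moreover $g\cdot u = \zeta u$ and, from \eqref{eq:xonvertex}, $x \cdot u = \gamma(1-\zeta)\,1$. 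I would instead \emph{define} an action by declaring $g\cdot u = \zeta u$ and $x\cdot u = \gamma(1-\zeta)$ and extending via the module-algebra axiom $x\cdot(pq) = p(x\cdot q) + (x\cdot p)(g\cdot q)$. An easy induction gives $x \cdot u^k = \gamma(1-\zeta)\,[k]_\zeta\, u^{k-1}$ with $[k]_\zeta = \tfrac{\zeta^k-1}{\zeta-1}$, and hence $x^j \cdot u^k = \gamma^j(1-\zeta)^j\,[k]_\zeta[k-1]_\zeta\cdots[k-j+1]_\zeta\, u^{k-j}$. Two facts make everything consistent: the relation $u^n=1$ is respected because $[n]_\zeta = 0$ (as $\zeta^n=1$), so $x\cdot u^n = 0 = x\cdot 1$; and $x^n$ kills every basis element $u^k$ with $0\le k\le n-1$, since once $j>k$ the product of $\zeta$-integers acquires the factor $[0]_\zeta = 0$. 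Thus $x^n = 0$ and we obtain a genuine $T(n)$-action extending the given $\mathbb{Z}_n$-action. By the already-proven necessity this action equals \eqref{eq:xonvertex} for some parameter, and matching on $u$ forces that parameter to be $\gamma$; so every $\gamma$ is realized. (Alternatively one verifies $x^n\cdot e_i = 0$ directly in the $e_i$-basis, but the $u$-substitution avoids the messy telescoping.)

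Finally, the ``In particular'' clause follows from Lemma~\ref{lem:Taftfaithful}, by which a Taft action is inner faithful iff $x \cdot \kk Q_0 \neq 0$. If $m < n$ then $x$ acts by $0$ by (i), so no extension is inner faithful. If $m=n$, choosing any $\gamma \neq 0$ in \eqref{eq:xonvertex} gives $x\cdot e_1 = \gamma\zeta(e_1 - \zeta e_2) \neq 0$, so that extension is inner faithful. Hence an inner faithful extension exists precisely when $m=n$.
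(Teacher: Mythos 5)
Your proof is correct. The necessity half is essentially the paper's argument, up to a harmless reordering: you extract the recurrences $\alpha_{i+1}=\zeta\alpha_i$, $\beta_{i+1}=\zeta\beta_i$ from $xg=\zeta gx$ before invoking $x\cdot 1=0$ rather than after, which as a small bonus makes part (i) independent of the unit axiom. Where you genuinely diverge is the sufficiency step, i.e.\ the verification that $x^n$ acts by zero. The paper does this in the $e_i$-basis via the appendix machinery: Lemma~\ref{lem:powerx} expands $x^k\cdot e_i$ in terms of complete homogeneous symmetric polynomials, and Lemma~\ref{lem:hvanish} kills the coefficients through the principal specialization of $q$-binomials (see Lemma~\ref{lem:x on ei}). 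You instead diagonalize the $g$-action by the discrete Fourier transform $u=\sum_i\zeta^{-i}e_i$, identify $\kk Q_0$ with $\kk[u]/(u^n-1)$, and reduce to the standard $q$-derivation action $x\cdot u^k=\gamma(1-\zeta)[k]_\zeta\,u^{k-1}$, where $x^n=0$ drops out of $[0]_\zeta=0$ and preservation of the ideal $(u^n-1)$ drops out of $[n]_\zeta=0$; this is essentially the $\kk[u]/(u^n-\beta)$ setting of \cite{MS} mentioned in the introduction. Your route is cleaner and more conceptual for the vertex algebra, and your closing step --- using the already-proved uniqueness to identify the constructed action with \eqref{eq:xonvertex} and evaluating on $u$ to pin down the parameter as $\gamma$ --- legitimately avoids converting back to the $e_i$-basis. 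What the paper's heavier computation buys is reusability: Lemma~\ref{lem:powerx} is stated for operators of the form $\eta_j v^i_j+\theta_{i,j}v^{i+1}_{j+1}+\tau_{i,j}v^i_{j+1}$ precisely so the same lemma disposes of $x^n=0$ on arrows in Theorems~\ref{thm:T(n)typeA} and~\ref{thm:T(n)typeB}, where no such simultaneous diagonalization is available.
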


\begin{proof}
Assume that we have a $T(n)$-action on $\kk Q_0$ extended from the $\mathbb{Z}_n$-action on $Q_0$ in Lemma~\ref{lem:GactQ0}.  By~\eqref{eq:spanei}, we know that  $x\cdot e_i = \alpha_i e_i + \beta_i e_{i+1}$ for some scalars $\alpha_i, \beta_i \in \kk$.  Then, we have that
\begin{equation} \label{eq:x dot 1}
0 = x\cdot 1 = x \cdot \sum_{i=1}^m e_i = \sum_{i=1}^m \alpha_i e_i + \beta_i e_{i+1} = \sum_{i=1}^m (\alpha_i + \beta_{i-1} )e_i,
\end{equation} 
which gives $\beta_{i-1} = -\alpha_i$. (Here, $\sum_{i=1}^m \beta_i e_{i+1} = \sum_{i=1}^m \beta_{i-1} e_i$ by reindexing.) 
Now the relation $xg = \zeta gx$ applied to $e_i$ gives
\begin{equation} \label{eq:alpha i}
\alpha_{i+1} e_{i+1} - \alpha_{i+2}e_{i+2} = \zeta(\alpha_{i} e_{i+1} - \alpha_{i+1} e_{i+2}),
\end{equation}
so that $\alpha_{i+1} = \zeta\alpha_i$ for all $i$.  Setting $\gamma := \alpha_1\zeta^{-1}$ gives $\alpha_i = \zeta^i \gamma$, so that  \eqref{eq:xonvertex} holds whenever a $T(n)$-action exists. 
We have assumed that $m$ divides $n$, but on the other hand, 
$ x \cdot e_i = x \cdot e_{i+m}$ implies that $\gamma\zeta^i=\gamma \zeta^{i+m}$. Thus, $\gamma = \gamma \zeta^m$. Hence, whenever $m < n = ord(\zeta)$, we have $\gamma=0$, and $x$ acts by 0. This establishes (i).

On the other hand, suppose that $m=n$.  We will show that Equation \eqref{eq:xonvertex} defines a $T(n)$-action on $\kk Q_0$ for any $\gamma \in \kk$.  A simple substitution verifies that $x g \cdot e_i = \zeta g x \cdot e_i$. The fact that the $x$-action preserves the relations $e_i e_j = \delta_{ij} e_i$  and $\sum_{i=1}^{ n} e_i =1$ is also easy to check by substitution. The only tedious part is to show that $x^n$ acts on $\kk Q_0$ by 0, which is verified by Lemma~\ref{lem:x on ei} in the appendix of computations, using symmetric functions. Now, the $T(n)$-action  on $\kk Q_0$ is inner faithful when $\gamma$ is nonzero, by Lemma~\ref{lem:Taftfaithful}. Therefore, (ii) holds.\end{proof}

Now to study Taft actions on path algebras of Schurian quivers in general, we set the following notation.

\begin{notation}[$\sigma(a)$]  \label{not:sigma} 
Suppose we have a quiver $Q$ and an action of $\mathbb{Z}_n \simeq \langle g \rangle \subset T(n)$ on $\kk Q$.  Given an arrow $a \in Q_1$, we know that there exists at most one path of length less than or equal to 1 from $s(a)$ to $t(g\cdot a)$ since $Q$ is Schurian and loopless. Denote this path by $\sigma(a)$ if it exists, and set $\sigma(a) = 0$ otherwise.  

To be more explicit, consider the following case: when $g$ fixes neither $s(a)$ nor $t(a)$, and $g\cdot t(a) \neq s(a)$, then $\sigma(a)$ is either an arrow or 0, and can be visualized in the following diagram.
\[
\vcenter{\hbox{\begin{tikzpicture}[point/.style={shape=circle,fill=black,scale=.5pt,outer sep=3pt},>=latex]
   \node[point,label={left:$s(a)$}] (1) at (-1,2) {};
   \node[point] (2) at (1,2) {};
   \node[point] (3) at (-1,0) {};
   \node[point,label={right:$t(g\cdot a)$}] (4) at (1,0) {};
  \path[->]
  	(1) edge node[left] {$a$} (3) 
  	(2) edge node[right] {$g\cdot a$} (4)
  	(1) edge node[above] {\quad$\sigma(a)$} (4) ;
   \end{tikzpicture}}}
\]
If $g\cdot t(a) = s(a)$, then $\sigma(a)$ is the trivial path at $s(a)$.  Moreover, we have that $\sigma(a) = g\cdot a$ whenever $g\cdot s(a) = s(a)$, and $\sigma(a) = a$ whenever $g \cdot t(a) = t(a)$. 
\end{notation}

The following result determines the action of $x$ on any arrow of $Q$. 

\begin{proposition}\label{prop:Taftactarrow}
Suppose we have an action of $T(n)$ on $\kk Q$, and let $a \in Q_1$ with $i_+:=s(a)$ and $j_-:=t(a)$.
Then, there exist scalars $\alpha, \beta, \lambda \in \kk$ such that
\begin{equation}\label{eq:xactspan}
x \cdot a = \alpha a + \beta (g \cdot a) + \lambda \sigma(a).
\end{equation}
Moreover, $\alpha$, $\beta$, $\lambda$ can be determined in special cases depending on the relative configuration of $a$ and $g \cdot a$, as described in Figures~\ref{fig:xactona} and \ref{fig:xactona2} below. Here, the dotted red arrows indicate the action of $g$ on $Q_0$. 
\end{proposition}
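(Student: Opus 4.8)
The plan is to pin down $x \cdot a$ by its \emph{bidegree}, meaning its decomposition into pieces with prescribed source and target vertex. First I would invoke Hypothesis~\ref{hyp:standing}(b): since $a$ has path length $1$, the element $y := x \cdot a$ lies in the filtered piece $F_1$, so it is a $\kk$-linear combination of trivial paths and arrows. Writing $y = \sum_{u,v \in Q_0} e_u y e_v$, each summand $e_u y e_v$ is therefore a scalar multiple of the unique path of length $\leq 1$ from $u$ to $v$, where uniqueness (and possible nonexistence) is guaranteed by $Q$ being Schurian and loopless. The proof then reduces to showing that at most three of these bidegree components are nonzero, and that they sit in the slots occupied by $a$, $g \cdot a$, and $\sigma(a)$.

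Next I would localize the source and target of $y$. Applying $x$ to the identity $a = e_{i_+} a$ and using $\Delta(x) = 1 \otimes x + x \otimes g$ together with the module-algebra axiom of Definition~\ref{def:Hopfact} gives
\begin{equation*}
(1 - e_{i_+})\, y = (x \cdot e_{i_+})(g \cdot a).
\end{equation*}
By~\eqref{eq:spanei} we have $x \cdot e_{i_+} \in \mathrm{span}\{e_{i_+}, e_{g \cdot i_+}\}$, while $g \cdot a$ is a nonzero scalar times an arrow with source $g \cdot i_+$ by Lemma~\ref{lem:GactkQ}; hence the right-hand side is a scalar multiple of $g \cdot a$, supported entirely at source $g \cdot i_+$. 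This forces $e_u y = 0$ whenever $u \notin \{i_+, g \cdot i_+\}$. The symmetric computation starting from $a = a e_{j_-}$ gives $y\,(1 - e_{g \cdot j_-}) = a\,(x \cdot e_{j_-})$, whose right-hand side is a scalar multiple of $a$ supported at target $j_-$, forcing $y e_v = 0$ whenever $v \notin \{j_-, g \cdot j_-\}$. Thus $y$ is concentrated in the four bidegrees with source in $\{i_+, g\cdot i_+\}$ and target in $\{j_-, g\cdot j_-\}$.

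It then remains to eliminate the spurious fourth bidegree $(g \cdot i_+, j_-)$. For this I would read off from the first displayed identity that $e_{g \cdot i_+} y$ is a scalar multiple of $g \cdot a$, which has target $g \cdot j_-$; when $g \cdot j_- \neq j_-$ this immediately gives $e_{g \cdot i_+}\, y\, e_{j_-} = 0$. The three surviving bidegrees are $(i_+, j_-)$, $(g \cdot i_+, g \cdot j_-)$, and $(i_+, g \cdot j_-)$, whose unique length-$\leq 1$ paths are precisely $a$, $g \cdot a$, and $\sigma(a)$ respectively --- the last by the very definition in Notation~\ref{not:sigma}, since $\sigma(a)$ is the unique path of length $\leq 1$ from $s(a) = i_+$ to $t(g \cdot a) = g \cdot j_-$. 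This yields $x \cdot a = \alpha a + \beta (g \cdot a) + \lambda \sigma(a)$ for some scalars $\alpha, \beta, \lambda$.

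The main obstacle is bookkeeping the degenerate configurations in which the four vertices $i_+, g\cdot i_+, j_-, g\cdot j_-$ are not all distinct --- for instance when $g$ fixes $s(a)$ or $t(a)$, so that $x \cdot e_{i_+} = 0$ or $x \cdot e_{j_-} = 0$ and some of the slots for $a$, $g\cdot a$, $\sigma(a)$ coincide. In each such case the localization argument degenerates gracefully (the identities $\sigma(a) = g\cdot a$ and $\sigma(a) = a$ recorded in Notation~\ref{not:sigma} absorb the collapsed slots), but one must check that the claimed formula still holds for a valid choice of $\alpha, \beta, \lambda$. Enumerating these configurations is exactly the content of the case analysis summarized in Figures~\ref{fig:xactona} and~\ref{fig:xactona2}, which also determines the scalars explicitly; for the bare existence statement it suffices to observe that the bidegree decomposition never leaves the span of $a$, $g\cdot a$, and $\sigma(a)$.
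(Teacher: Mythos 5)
Your argument is correct and follows essentially the same route as the paper: both rest on applying the module-algebra axiom to $a = e_{i_+}a$ and $a = a e_{j_-}$, the only difference being that you organize the conclusion via the Peirce decomposition $\sum_{u,v} e_u y e_v$ and explicitly kill the fourth bidegree, where the paper intersects two spanning sets. The explicit coefficient values in Figures~\ref{fig:xactona} and~\ref{fig:xactona2} (e.g.\ $\beta = -(\gamma_+)\zeta^{i+1}$ when $e_{i_+}(g\cdot a)=0$, and $\alpha = (\gamma_-)\zeta^{j}$ when $a(g\cdot e_{j_-})=0$) are read off from the very identities $(1-e_{i_+})\,y=(x\cdot e_{i_+})(g\cdot a)$ and $y\,(1-e_{g\cdot j_-})=a\,(x\cdot e_{j_-})$ that you have already derived, so nothing essential is missing from the ``moreover'' clause.
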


\begin{figure}[h]
${\small
\begin{array}{|r|c|c|c|}
\hline
\begin{tabular}{c}\text{Relation}\\\text{between}\\{$a$ and $g\cdot a$}\end{tabular}
&
\vcenter{\hbox{\begin{tikzpicture}[point/.style={shape=circle,fill=black,scale=.5pt,outer sep=3pt},>=latex]
   \node[point,label={left:$i_+=g\cdot i_+$}] (1) at (0,2) {};
   \node[point,label={left:$j_-$}] (3) at (-1,0) {};
   \node[point,label={right:$g\cdot j_-$}] (4) at (1,0) {};
  \path[->]
  	(1) edge node[left] {$a$} (3) 
  	(1) edge node[right] {$g\cdot a = \sigma(a) $} (4);
  \draw[->][dotted,red] (3) edge (4);
 \draw[->][dotted,red] (1) to [out=140,in=60,looseness=10] (1);
   \end{tikzpicture}}}
&
\vcenter{\hbox{\begin{tikzpicture}[point/.style={shape=circle,fill=black,scale=.5pt,outer sep=3pt},>=latex]
   \node[point,label={left:$i_+$}] (1) at (-1,2) {};
   \node[point,label={right:$g\cdot i_+$}] (2) at (1,2) {};
   \node[point,label={left:$j_-=g\cdot j_-$}] (3) at (0,0) {};
  \path[->]
  	(1) edge node[left] {$a=\sigma(a)$} (3) 
  	(2) edge node[right] {$g\cdot a$} (3);
  \draw[->][dotted,red] (1) edge (2);
 \draw[->][dotted,red] (3) to [out=320,in=240,looseness=10] (3);
   \end{tikzpicture}}}
\quad
 &
\quad
\vcenter{\hbox{\begin{tikzpicture}[point/.style={shape=circle,fill=black,scale=.5pt,outer sep=3pt},>=latex]
   \node[point,label={left:$i_+$}] (1) at (-1,2) {};
   \node[point,label={right:$ g\cdot i_+$}] (2) at (1,2) {};
   \node[point,label={left:$j_-$}] (3) at (-1,0) {};
   \node[point,label={right:$g\cdot j_-$}] (4) at (1,0) {};
  \path[->]
  	(1) edge node[left] {$a$} (3) 
  	(2) edge node[right] {$g\cdot a$} (4)
  	(1) edge node[above] {\quad$\sigma(a)$} (4) ;
   \draw[->][dotted,red] (1) edge (2);
  \draw[->][dotted,red] (3) edge (4);
   \end{tikzpicture}}}
\quad
\\
\hline
&&&\\
x\cdot e_{i_+}= & 0 & (\zg_+)\zeta^i(e_{i_+} - \zeta g\cdot e_{i_+}) & (\zg_+)\zeta^i(e_{i_+} - \zeta g\cdot e_{i_+}) \\
\hline
&&&\\
x\cdot e_{j_-}= & (\zg_{-})\zeta^j(e_{j_-} - \zeta g\cdot e_{j_-}) & 0 & (\zg_{-})\zeta^j(e_{j_-} - \zeta g\cdot e_{j_-})\\
\hline
&&&\\
x\cdot a = & (\gamma_-)\zeta^j a + \beta (g\cdot a) & \alpha a -(\gamma_+) \zeta^{i+1} (g\cdot a) & (\gamma_-)\zeta^j a -(\gamma_+) \zeta^{i+1} (g\cdot a) +\lambda \sigma(a)\\
\hline
\end{array}}
$
\caption{$a(g \cdot a) = (g \cdot a) a= 0$}\label{fig:xactona}
\end{figure}

\begin{figure}[h]
${\small
\begin{array}{|r|c|c|c|}
\hline
\begin{tabular}{c}\text{Relation}\\\text{between}\\{$a$ and $g\cdot a$}\end{tabular}
&
\vcenter{\hbox{\begin{tikzpicture}[point/.style={shape=circle,fill=black,scale=.5pt,outer sep=3pt},>=latex]
   \node[point,label={right:$i$}] (1) at (-1,3) {};
   \node[point,label={right:$j = g\cdot i$}] (2) at (0,2) {};
   \node[point,label={right:$g\cdot j$}] (3) at (1,1) {};
  \path[->]
  	(1) edge node[right] {$a$} (2) 
  	(2) edge node[right] {$g\cdot a$} (3);
  \draw[->]	[bend right=50] (1) edge node[left] {$\sigma(a)$} (3);
  \draw[->][dotted,red,bend right=30] (1) edge (2);
  \draw[->][dotted,red,bend right=30] (2) edge (3);
   \end{tikzpicture}}}
&
\vcenter{\hbox{\hspace{.2in}\begin{tikzpicture}[point/.style={shape=circle,fill=black,scale=.5pt,outer sep=3pt},>=latex]
   \node[point,label={right:$g\cdot i$}] (1) at (-1,3) {};
   \node[point,label={right:$i = g\cdot j=\sigma(a)$}] (2) at (0,2) {};
   \node[point,label={right:$j$}] (3) at (1,1) {};
  \path[->]
  	(1) edge node[right] {$g\cdot a$} (2) 
  	(2) edge node[right] {$a$} (3);
  \draw[->][dotted,red,bend left=30] (2) edge (1);
  \draw[->][dotted,red,bend left=30] (3) edge (2);
   \end{tikzpicture}}}
\quad
\\
\hline
&&\\
x\cdot e_{i}= & \zg\zeta^i(e_{i} - \zeta g\cdot e_{i}) &  \zg\zeta^i(e_{i} - \zeta g\cdot e_{i}) \\
\hline
&&\\
x\cdot e_{j}= & \zg\zeta^j(e_{j} - \zeta g\cdot e_{j}) & \zg\zeta^j(e_{j} - \zeta g\cdot e_{j})  \\
\hline
&&\\
x\cdot a = & \gamma\zeta^j a -\gamma \zeta^{i+1} (g\cdot a) +\lambda \sigma(a)\ &  \gamma\zeta^j a -\gamma \zeta^{i+1} (g\cdot a) +\lambda e_i \\
\hline
\end{array}}
$
\caption{$a(g \cdot a) \neq 0~~$ or  $~~(g \cdot a)a \neq 0$}\label{fig:xactona2}
\end{figure}

\begin{proof}
Let $\zg_+, \zg_-\in \kk$ be the scalars from Proposition \ref{prop:TaftactQ0} such that 
\begin{equation} \label{eq:xonvertex2}
x \cdot e_{i_+} = (\zg_+)\zeta^i(e_{i_+} - \zeta g\cdot e_{i_+}) \quad \text{and} \quad x \cdot e_{j_-} = (\zg_{-})\zeta^j(e_{j_-} - \zeta g\cdot e_{j_-}),
\end{equation}
where $g \cdot e_{\ell_{\pm}} = e_{(\ell+1)_{\pm}}$ as usual.

From the relation $a=e_{i_+} a$, we can compute
\[
\begin{array}{ll}
x\cdot a ~=~ x \cdot (e_{i_+}a) &~=~ e_{i_+} (x\cdot a) + (\zg_+)\zeta^i(e_{i_+} - \zeta g\cdot e_{i_+})(g\cdot a)\\ &~=~ 
\begin{cases} 
e_{i_+} (x\cdot a) + (\gamma_+)\zeta^i(1 -  \zeta) (g\cdot a), & \text{if } e_{i_+}(g \cdot a) = g \cdot a\\
e_{i_+} (x\cdot a) - (\zg_+) \zeta^{i+1} (g\cdot a), & \text{if } e_{i_+}(g \cdot a) = 0.
\end{cases}
\end{array}
\]
Thus, $x \cdot a \in $ span$_{\kk}\{ \text{paths starting at } e_{i_+},~ g \cdot a\}$.
Similarly, the relation $a= ae_{j_-}$ gives
\[
\begin{array}{ll}
x\cdot a ~=~ x\cdot( a e_{j_-}) &~=~ (\zg_{-})\zeta^j a(e_{j_-} - \zeta g\cdot e_{j_-}) + (x\cdot a)(g\cdot e_{j_-})\\ &~=~
\begin{cases} 
(\zg_{-})\zeta^j(1 -\zeta)a + (x\cdot a)(g\cdot e_{j_-}), & \text{if } a (g \cdot e_{j_-}) =  a\\
(\zg_{-})\zeta^j a  + (x\cdot a)(g\cdot e_{j_-}), & \text{if } a (g \cdot e_{j_-})  = 0.
\end{cases}
\end{array}
\]
Thus, $x \cdot a \in $ span$_{\kk}\{a, ~\text{paths ending at } g \cdot e_{j_-}\}$. 
Intersecting these two conditions on $x \cdot a$ gives Equation~\eqref{eq:xactspan}.
The coefficients  $\alpha$, $\beta$, $\lambda$ can be determined more explicitly, but depend on the relative configuration of $a$ and $g\cdot a$.  We consider the various cases below.

Suppose that $e_{i_+}(g \cdot a) = g \cdot a$, then $g \cdot s(a) = s(a)$. So, as remarked in Notation \ref{not:sigma}, we have that $\sigma(a) = g\cdot a$. We can omit this term in \eqref{eq:xactspan} by absorbing $\lambda$ into $\beta$ in this case.
Similarly, if $a (g \cdot e_{j_-}) =  a$, then $g \cdot t(a) = t(a)$ and  we have that $\sigma(a) =a$. We can omit this term in \eqref{eq:xactspan} by absorbing $\lambda$ into $\alpha$ in this case. On the other hand, if $e_{i_+}(g \cdot a) = 0$, then $x \cdot a = x \cdot e_{i_+} a$ implies that
$$\alpha a + \beta (g \cdot a) + \lambda \sigma(a) ~=~ e_{i_+} [\alpha a + \beta (g \cdot a) + \lambda \sigma(a)] - (\gamma_{+}) \zeta^{i+1} (g\cdot a)
~=~ \alpha a  - (\gamma_+) \zeta^{i+1}(g \cdot a) + \lambda \sigma(a).$$
Thus in this case, $\beta=  - (\gamma_+) \zeta^{i+1}$.
Similarly, if $a (g \cdot e_{j_-}) =  0$, then $x \cdot a = x \cdot a e_{j_-}$ implies that
$$\alpha a + \beta (g \cdot a) + \lambda \sigma(a) ~=~ (\zg_{-})\zeta^ja + [\alpha a + \beta (g \cdot a) + \lambda \sigma(a)](g\cdot e_{j_-}) ~=~ (\zg_{-})\zeta^ja +  \beta (g \cdot a) + \lambda \sigma(a).$$
So, $\alpha = (\gamma_-)\zeta^j$.
These results are collected in Figure~\ref{fig:xactona}.  In each case, the $x$-action on the vertices follow from Proposition~\ref{prop:TaftactQ0}.  In Figure~\ref{fig:xactona2}, the results are further specialized to the cases where $g\cdot s(a) =t(a)$ or $g\cdot t(a) = s(a)$.  Since the $\pm$ notation serves to distinguish the orbits of $s(a)$ and $t(a)$, the $\pm$ notation is dropped in Figure~\ref{fig:xactona2}.
\end{proof}

As an illustration of the results above, we study Taft actions on a path algebra $\kk Q$, where $\mathbb{Z}_n$ fixes  $Q_0$.

 \begin{example}[$T(n)$-action on $\kk Q$, $\mathbb{Z}_n$ fixes $Q_0$]  \label{ex:GfixesQ0}
 If $\mathbb{Z}_n$ fixes the vertices of a quiver $Q$, then we claim that any extended action of $T(n)$ on $\kk Q$ is not inner faithful. First, by Proposition~\ref{prop:TaftactQ0} (i) with $m=1$, we get that $x \cdot e_i =0$ for all $i \in Q_0$. For the arrows, we get that   $s(g \cdot a) = e_{s(a)}$ and $t(g \cdot a) = e_{t(a)}$  by the assumption. So, $\sigma(a) = g \cdot a$. Moreover, $g \cdot a = \mu_a a$ by Lemma~\ref{lem:GactkQ}. Now, Proposition~\ref{prop:Taftactarrow} implies that $x \cdot a = \alpha a$ for some $\alpha \in \kk$. Finally, using the relation $x^n=0$, we conclude that $x \cdot a = 0$, and the claim holds.
\end{example}

%%%%%%%%%%%%%%%%%%%%%%%%%%%%%%%%%%%%%%%
%%%%%%%%%%%%%%%%%%%%%%%%%%%%%%%%%%%%%%%
%%%%%%%%%%%%%%%%%%%%%%%%%%%%%%%%%%%%%%%

\section{Minimal Quivers} \label{sec:minimalQ}

Given any Hopf algebra  action on an algebra $A$,  it is natural to study the restricted Hopf action on a subalgebra of $A$, if one exists. We introduce {\it $\mathbb{Z}_n$-minimal quivers} in this section, which will be the building blocks of Taft actions on path algebras of quivers in subsequent sections.
The following definition serves to fix notation for specific quivers that will be used throughout the rest of the paper.

\begin{definition}[$K_m$, $K_{m,m'}$, $a_j^i$, $b_j^i$] \label{def:complete}
Let $m$ and $m'$ be positive integers.  
\begin{enumerate}
\item[(1)] The \emph{complete quiver} $K_m$ (or \emph{complete digraph}) has vertex set $\{1, 2, \dotsc, m\}$, with an arrow $a^i_j$ from $i$ to $j$ for every ordered pair of distinct vertices $(i,j)$.  For uniformity in the formulas, we also take the symbol $a^i_i$ to mean the trivial path $e_i$ at vertex $i$ (rather than a loop, which we have excluded).

\item[(2)] The {\it complete bipartite quiver} $K_{m,m'}$ has a top row of $m$ vertices and a bottom row of $m'$ vertices, labeled by $\{1_+, \dots, m_+\}$ and $\{1_-, \dots, m'_-\}$, respectively.
There is an arrow $b^i_j$ from each vertex $i_+$ in the top row to each vertex $j_-$ in the bottom row; that is, $s(b^i_j) = i_+$ and $t(b^i_j) = j_-$. 
\end{enumerate}
\end{definition}

An example of each type is given below, without vertex or arrow labels.

\[
\hspace{-.5in}
K_{4}=
\vcenter{\hbox{\begin{tikzpicture}[point/.style={shape=circle,fill=black,scale=.5pt,outer sep=3pt},>=latex]
   \node[point] (1) at (0,0) {};
   \node[point] (2) at (2,0) {};
   \node[point] (3) at (2,2) {};
   \node[point] (4) at (0,2) {};

\foreach \s in {1,...,4}
\foreach \t in {1,...,4}
{
	\ifthenelse{\NOT \s = \t}{\draw[->, bend left =10] (\s) edge (\t);} 
}
   \end{tikzpicture}}}
\hspace{.8in}
K_{2,3}=
\vcenter{\hbox{\begin{tikzpicture}[point/.style={shape=circle,fill=black,scale=.5pt,outer sep=3pt},>=latex]
   \node[point] (1) at (-1,2) {};
   \node[point] (2) at (1,2) {};
   \node[point] (3) at (-2,0) {};
   \node[point] (4) at (0,0) {};
   \node[point] (5) at (2,0) {};
  \path[->]
  	(1) edge  (3) 
  	(2) edge  (4)
  	(1) edge (4) 
  	(1) edge  (5)
	(2) edge  (5)
  	(2) edge (3) ;
   \end{tikzpicture}}}
\]
\medskip

To illustrate the arrow labels, the first diagram below shows some arrows in $K_4$ and the second diagram shows all arrows starting at $1^+$ in $K_{2,3}$.

\[
\vcenter{\hbox{\begin{tikzpicture}[point/.style={shape=circle,fill=black,scale=.5pt,outer sep=3pt},>=latex]
   \node[point,label={left:1}] (1) at (0,2) {};
   \node[point,label={right:2}] (2) at (2,2) {};
   \node[point,label={right:3}] (3) at (2,0) {};
   \node[point,label={left: 4}] (4) at (0,0) {};
\path[->] 
(1) edge [bend left=10] node[above] {$a^1_2$}  (2) 
(2) edge [bend left=10] node[right] {$a^2_3$}(3)
(1) edge [bend left=10] node[right] {$a^1_3$}(3)
(4) edge [bend left=10] node[left] {$a^4_1$}(1)
 (4) edge [bend left=10] node[above] {$a^4_3$} (3);
   \end{tikzpicture}}}
\hspace{.8in}
\vcenter{\hbox{\begin{tikzpicture}[point/.style={shape=circle,fill=black,scale=.5pt,outer sep=3pt},>=latex]
   \node[point,label={left:$1_+$}] (0t) at (-1,2) {};
   \node[point,label={left:$1_-$}] (0b) at (-2,0) {};
   \node[point,label={right:$2_-$}] (1) at (0,0) {};
   \node[point,label={right:$3_-$}] (2) at (2,0) {};
  \path[->]
  	(0t) edge node[left] {$b^1_1$} (0b) 
 	(0t) edge node[right] {$b^1_2$} (1) 
  	(0t) edge node[right] {~$b^1_3$} (2); 
   \end{tikzpicture}}}
\]
\medskip

\noindent See Figure~\ref{fig:Z2minimal} for further illustrations.

Suppose we have an action of $\mathbb{Z}_n$ on the path algebra of a subquiver of $K_m$ or $K_{m, m'}$.  Let $g$ be a generator of $\mathbb{Z}_n$.  After possibly relabeling, we can assume $g$ acts on the trivial paths by $g\cdot e_i = e_{i+1}$ (for $K_m$) and $g\cdot e_{i_+} = e_{(i+1)_+},\ g \cdot e_{i_-= (i+1)_-}$ (for $K_{m, m'}$), where the indices are taken modulo $m$ or $m'$ as appropriate. 
 By Lemma \ref{lem:GactkQ}, there is a collection of nonzero scalars $\mu_{i,j}$ such that $g \cdot a^i_j =\mu_{i,j} a^{i+1}_{j+1}$ (for $K_m$), or $g \cdot b^i_j =\mu_{i,j} b^{i+1}_{j+1}$ (for $K_{m, m'}$).
Again, sub/super-scripts are interpreted modulo $m$ or $m'$ as appropriate.  Since $g^n$ is the identity and $g \cdot e_i = e_{i+1}$ (for $K_m$), we have that these scalars $\mu_{i,j}$ satisfy
\begin{equation}\label{eq:prodmu}
\begin{split}
\mu_{i,i} &=1 \quad  \text{in the }K_m \text{ case,}  \\
\textstyle \prod_{\ell=0}^{n-1} \mu_{i+\ell, j+\ell} &= 1 \quad \text{in both cases.}
\end{split}
\end{equation}

Since an arrow (or, more specifically, the source and target of an arrow) of a quiver $Q$ can only be part one or two $\mathbb{Z}_n$-orbits of $Q_0$, we make the following definition.

\begin{definition}[$\mathbb{Z}_n$-minimal, Type A, Type B] \label{def:typeAB}
Let $\mathbb{Z}_n$ act  on  a quiver $Q$. Say that $Q$ is $\mathbb{Z}_n${\it-minimal} of:
\begin{itemize}
\item {\it Type A} if $Q$ is a $\mathbb{Z}_n$-stable subquiver of $K_m$, where $m > 1$ is a positive integer dividing $n$; or
\item {\it Type B} if $Q$ is a $\mathbb{Z}_n$-stable subquiver of $K_{m,m'}$, where $m,m'\geq 1$ and $m,m'$ divide $n$. 
\end{itemize}
\end{definition}

Consider the following example.

\begin{example} \label{ex:Z4minl}
The following quiver $Q$ admits both a $\mathbb{Z}_4$-action and a $\mathbb{Z}_2$-action illustrated by the dotted red arrows below. Observe that $Q$ is $\mathbb{Z}_4$-minimal of Type B. Further, we see it is $\mathbb{Z}_{4d}$-minimal of Type B for any integer $d \geq 1$. However, it is not $\mathbb{Z}_2$-minimal of Type B. 

\[
 \vcenter{\hbox{\begin{tikzpicture}[point/.style={shape=circle,fill=black,scale=.5pt,outer sep=3pt},>=latex]
   \node[point] (1) at (-1,2) {};
   \node[point] (2) at (1,2) {};
   \node[point] (3) at (-3,0) {};
   \node[point] (4) at (-1,0) {};
 \node[point] (5) at (1,0) {};
   \node[point] (6) at (3,0) {};
\draw[->] (1) edge (3);
\draw[->] (1) edge (4);
\draw[->] (1) edge (5);
\draw[->] (1) edge (6);
\draw[->] (2) edge (3);
\draw[->] (2) edge (4);
\draw[->] (2) edge (5);
\draw[->] (2) edge (6);
\draw[<->][dotted,red] (1) edge (2);
\draw[->][dotted,red] (3) edge (4);
\draw[->][dotted,red] (4) edge (5);
\draw[->][dotted,red] (5) edge (6);
\draw[->][dotted,red,style={bend left=10}] (6) edge (3);
   \end{tikzpicture}}}
\hspace{1in}
 \vcenter{\hbox{\begin{tikzpicture}[point/.style={shape=circle,fill=black,scale=.5pt,outer sep=3pt},>=latex]
   \node[point] (1) at (-1,2) {};
   \node[point] (2) at (1,2) {};
   \node[point] (3) at (-3,0) {};
   \node[point] (4) at (-1,0) {};
 \node[point] (5) at (1,0) {};
   \node[point] (6) at (3,0) {};
\draw[->] (1) edge (3);
\draw[->] (1) edge (4);
\draw[->] (1) edge (5);
\draw[->] (1) edge (6);
\draw[->] (2) edge (3);
\draw[->] (2) edge (4);
\draw[->] (2) edge (5);
\draw[->] (2) edge (6);
\draw[<->][dotted,red] (1) edge (2);
\draw[<->][dotted,red] (3) edge (4);
\draw[<->][dotted,red] (5) edge (6);
   \end{tikzpicture}}}
\]
\begin{center} \hspace{.05in} $\mathbb{Z}_{4d}$-minimal \hspace{2.5in} not $\mathbb{Z}_2$-minimal \end{center}
\end{example}
\medskip

Now we list the $\mathbb{Z}_n$-minimal quivers for small $n$. For Type A, note that there is only one $\mathbb{Z}_2$-minimal quiver that admits a transitive action of $\mathbb{Z}_2$ on vertices; see {\sf (I)} of  Figure~\ref{fig:Z2minimal}. See Figure \ref{fig:Z3minimalA} for the three $\mathbb{Z}_3$-minimal quivers of  Type A.  The dotted, red arrow indicates the action of $\mathbb{Z}_3$ on $Q_0$ (clockwise rotation in each case).
\medskip

\begin{figure}[h]
\[
 \vcenter{\hbox{\begin{tikzpicture}[point/.style={shape=circle,fill=black,scale=.5pt,outer sep=3pt},>=latex]
   \node[point] (1) at (0,1) {};
   \node[point] (2) at (-1,-1) {};
   \node[point] (3) at (1,-1) {};
\draw[->][bend left=20]  (1) edge (3);
\draw[->][bend left=20]  (3) edge (2);
\draw[->][bend left=20]  (2) edge (1);
\path[->,red,dotted]
 (1) edge (3)
 (3) edge (2)
 (2) edge (1); 
   \end{tikzpicture}}}
\hspace{1in}
 \vcenter{\hbox{\begin{tikzpicture}[point/.style={shape=circle,fill=black,scale=.5pt,outer sep=3pt},>=latex]
   \node[point] (1) at (0,1) {};
   \node[point] (2) at (-1,-1) {};
   \node[point] (3) at (1,-1) {};
\draw[->][bend left=20]  (1) edge (2);
\draw[->][bend left=20]  (2) edge (3);
\draw[->][bend left=20]  (3) edge (1);
\path[->,red,dotted]
 (1) edge (3)
 (3) edge (2)
 (2) edge (1); 
   \end{tikzpicture}}}
\hspace{1in}
 \vcenter{\hbox{\begin{tikzpicture}[point/.style={shape=circle,fill=black,scale=.5pt,outer sep=3pt},>=latex]
   \node[point] (1) at (0,1) {};
   \node[point] (2) at (-1,-1) {};
   \node[point] (3) at (1,-1) {};
\draw[->][bend left=20]  (1) edge (3);
\draw[->][bend left=20]  (3) edge (2);
\draw[->][bend left=20]  (2) edge (1);
\draw[->][bend left=20]  (1) edge (2);
\draw[->][bend left=20]  (2) edge (3);
\draw[->][bend left=20]  (3) edge (1);
\path[->,red,dotted]
 (1) edge (3)
 (3) edge (2)
 (2) edge (1); 
   \end{tikzpicture}}}
\]
\caption{$\mathbb{Z}_3$-minimal quivers of  Type A}\label{fig:Z3minimalA}
\end{figure}

\noindent For Type B,  there  are five Type B $\mathbb{Z}_2$-minimal quivers; see Figure~\ref{fig:Z2minimal} {\sf (II)}-{\sf (VI)} for an illustration.  Moreover, see  Figure \ref{fig:Z3minimalB} for the six $\mathbb{Z}_3$-minimal quivers of Type B.

\begin{figure}[h]
\[
\hspace{.6in}
 \vcenter{\hbox{\begin{tikzpicture}[point/.style={shape=circle,fill=black,scale=.5pt,outer sep=3pt},>=latex]
   \node[point] (1) at (0,1) {};
   \node[point] (2) at (0,-1) {};
\draw[->]  (1) edge (2);
 \draw[->][dotted,red] (1) to [out=140,in=60,looseness=10] (1);
 \draw[->][dotted,red] (2) to [out=320,in=240,looseness=10] (2);
   \end{tikzpicture}}}
\hspace{1.1in}
 \vcenter{\hbox{\begin{tikzpicture}[point/.style={shape=circle,fill=black,scale=.5pt,outer sep=3pt},>=latex]
   \node[point] (1) at (0,1) {};
   \node[point] (2) at (-2,-1) {};
   \node[point] (3) at (0,-1) {};
   \node[point] (4) at (2,-1) {};
\draw[->]  (1) edge (2);
\draw[->]  (1) edge (3);
\draw[->]  (1) edge (4);
\draw[->][dotted,red] (1) to [out=140,in=60,looseness=10] (1);
\draw[->][dotted,red] (2) to  (3);
\draw[->][dotted,red] (3) to  (4);
\draw[->][dotted,red,bend left=20] (4) to  (2); 
   \end{tikzpicture}}}
\hspace{.5in}
 \vcenter{\hbox{\begin{tikzpicture}[point/.style={shape=circle,fill=black,scale=.5pt,outer sep=3pt},>=latex]
   \node[point] (1) at (0,-1) {};
   \node[point] (2) at (-2,1) {};
   \node[point] (3) at (0,1) {};
   \node[point] (4) at (2,1) {};
\draw[->]  (2) edge (1);
\draw[->]  (3) edge (1);
\draw[->]  (4) edge (1);
\draw[->][dotted,red] (1) to [out=320,in=240,looseness=10] (1);
\draw[->][dotted,red] (2) to  (3);
\draw[->][dotted,red] (3) to  (4);
\draw[->][dotted,red,bend right=20] (4) to  (2); 
   \end{tikzpicture}}}
\]
\[
 \vcenter{\hbox{\begin{tikzpicture}[point/.style={shape=circle,fill=black,scale=.5pt,outer sep=3pt},>=latex]
   \node[point] (1) at (-2,1) {};
   \node[point] (2) at (0,1) {};
   \node[point] (3) at (2,1) {};
   \node[point] (4) at (-2,-1) {};
   \node[point] (5) at (0,-1) {};
   \node[point] (6) at (2,-1) {};
\draw[->] (1) edge (4);
\draw[->] (2) edge (5);
\draw[->] (3) edge (6);
\draw[->][dotted,red] (1) to  (2);
\draw[->][dotted,red] (2) to  (3);
\draw[->][dotted,red,bend right=20] (3) to  (1); 
\draw[->][dotted,red] (4) to  (5);
\draw[->][dotted,red] (5) to  (6);
\draw[->][dotted,red,bend left=20] (6) to  (4); 
   \end{tikzpicture}}}
\hspace{.5in}
\vcenter{\hbox{\begin{tikzpicture}[point/.style={shape=circle,fill=black,scale=.5pt,outer sep=3pt},>=latex]
   \node[point] (1) at (-2,1) {};
   \node[point] (2) at (0,1) {};
   \node[point] (3) at (2,1) {};
   \node[point] (4) at (-2,-1) {};
   \node[point] (5) at (0,-1) {};
   \node[point] (6) at (2,-1) {};
\draw[->] (1) edge (4);
\draw[->] (2) edge (5);
\draw[->] (3) edge (6);
\draw[->] (1) edge (5);
\draw[->] (2) edge (6);
\draw[->] (3) edge (4);
\draw[->][dotted,red] (1) to  (2);
\draw[->][dotted,red] (2) to  (3);
\draw[->][dotted,red,bend right=20] (3) to  (1); 
\draw[->][dotted,red] (4) to  (5);
\draw[->][dotted,red] (5) to  (6);
\draw[->][dotted,red,bend left=20] (6) to  (4); 
   \end{tikzpicture}}}
\hspace{.5in}
\vcenter{\hbox{\begin{tikzpicture}[point/.style={shape=circle,fill=black,scale=.5pt,outer sep=3pt},>=latex]
   \node[point] (1) at (-2,1) {};
   \node[point] (2) at (0,1) {};
   \node[point] (3) at (2,1) {};
   \node[point] (4) at (-2,-1) {};
   \node[point] (5) at (0,-1) {};
   \node[point] (6) at (2,-1) {};
\draw[->] (1) edge (4);
\draw[->] (2) edge (5);
\draw[->] (3) edge (6);
\draw[->] (1) edge (5);
\draw[->] (2) edge (6);
\draw[->] (3) edge (4);
\draw[->] (1) edge (6);
\draw[->] (2) edge (4);
\draw[->] (3) edge (5);
\draw[->][dotted,red] (1) to  (2);
\draw[->][dotted,red] (2) to  (3);
\draw[->][dotted,red,bend right=20] (3) to  (1); 
\draw[->][dotted,red] (4) to  (5);
\draw[->][dotted,red] (5) to  (6);
\draw[->][dotted,red,bend left=20] (6) to  (4); 
   \end{tikzpicture}}}
\]
\caption{$\mathbb{Z}_3$-minimal quivers of  Type B}\label{fig:Z3minimalB}
\end{figure}

%%%%%%%%%%%%%%%%%%%%%%%%%%%%%%%%%%%%%
%%%%%%%%%%%%%%%%%%%%%%%%%%%%%%%%%%%%%
%%%%%%%%%%%%%%%%%%%%%%%%%%%%%%%%%%%%%

\section{Sweedler algebra actions on path algebras of minimal quivers} \label{sec:Sweedler}

In this section, we study the action of the Sweedler algebra $T(2)$ on path algebras of quivers. This is achieved by first computing the action of the Sweedler algebra on $\mathbb{Z}_2$-minimal quivers [Theorem~\ref{thm:T(2)minimal}]. Then, later, we present results on gluing such actions to yield Sweedler actions on more general quivers in Section~\ref{sec:gluing}.

Recall from Definition~\ref{def:Taft} that the Sweedler algebra is the 4-dimensional Taft algebra $T(2)$ generated by a grouplike element $g$ and a $(1,g)$-skew-primitive element $x$, subject to relations: $$g^2 =1, \quad x^2 = 0, \quad xg + gx=0.$$
Note that $G(T(2)) \simeq \mathbb{Z}_2$.

\begin{figure}[h]
{\footnotesize \[
\begin{array}{|c|c|c|}
\hline
\quad
\vcenter{\hbox{\begin{tikzpicture}[point/.style={shape=circle,fill=black,scale=.5pt,outer sep=3pt},>=latex]
   \node[point,label={left:$1$}] (1) at (-1,2) {};
   \node[point,label={right:$2$}] (2) at (1,2) {};
  \path[->]
  	(1) edge[bend left=25] node[above] {$a_2^1$} (2) 
  	(2) edge[bend left=25] node[below] {$a_1^2$} (1);
  \draw[<->][dotted,red] (1) edge (2);
   \end{tikzpicture}}}
\quad
&
\quad
\hspace{-.07in}\vcenter{\hbox{\begin{tikzpicture}[point/.style={shape=circle,fill=black,scale=.5pt,outer sep=3pt},>=latex]
   \node[point,label={left:$1_+$}] (1) at (-1,2) {};
   \node[point,label={left:$1_-$}] (2) at (-1,0) {};
  \path[->]
  	(1) edge node[left] {$b_1^1$} (2) ;
 \draw[->][dotted,red] (1) to [out=140,in=60,looseness=10] (1);
 \draw[->][dotted,red] (2) to [out=320,in=240,looseness=10] (2);
   \end{tikzpicture}}}
\quad
 &
\quad
\vcenter{\hbox{\begin{tikzpicture}[point/.style={shape=circle,fill=black,scale=.5pt,outer sep=3pt},>=latex]
   \node[point,label={left:$1_+=2_+$}] (1) at (0,2) {};
   \node[point,label={left:$1_-$}] (3) at (-1,0) {};
   \node[point,label={right:$2_-$}] (4) at (1,0) {};
  \path[->]
  	(1) edge node[left] {$b_1^1$} (3) 
  	(1) edge node[right] {$b_2^2$} (4);
  \draw[<->][dotted,red] (3) edge (4);
 \draw[->][dotted,red] (1) to [out=140,in=60,looseness=10] (1);
   \end{tikzpicture}}}
\quad\\
{\sf (I)} & {\sf (II)} & {\sf (III)}\\
&&\\
\hline
&&\\
\quad
\vcenter{\hbox{\begin{tikzpicture}[point/.style={shape=circle,fill=black,scale=.5pt,outer sep=3pt},>=latex]
   \node[point,label={left:$1_+$}] (1) at (-1,2) {};
   \node[point,label={right:$2_+$}] (2) at (1,2) {};
   \node[point,label={left:$1_-=2_-$}] (3) at (0,0) {};
  \path[->]
  	(1) edge node[left] {$b_1^1$} (3) 
  	(2) edge node[right] {$b_2^2$} (3);
  \draw[<->][dotted,red] (1) edge (2);
 \draw[->][dotted,red] (3) to [out=320,in=240,looseness=10] (3);
   \end{tikzpicture}}}
\quad
 &
\quad
\vcenter{\hbox{\begin{tikzpicture}[point/.style={shape=circle,fill=black,scale=.5pt,outer sep=3pt},>=latex]
   \node[point,label={left:$1_+$}] (1) at (-1,2) {};
   \node[point,label={right:$2_+$}] (2) at (1,2) {};
   \node[point,label={left:$1_-$}] (3) at (-1,0) {};
   \node[point,label={right:$2_-$}] (4) at (1,0) {};
  \path[->]
  	(1) edge node[left] {$b_1^1$} (3) 
  	(2) edge node[right] {$b_2^2$} (4);
   \draw[<->][dotted,red] (1) edge (2);
  \draw[<->][dotted,red] (3) edge (4);
   \end{tikzpicture}}}
\quad
& 
\quad \vcenter{\hbox{\begin{tikzpicture}[point/.style={shape=circle,fill=black,scale=.5pt,outer sep=3pt},>=latex]
   \node[point,label={left:$1_+$}] (1) at (-1,2) {};
   \node[point,label={right:$2_+$}] (2) at (1,2) {};
   \node[point,label={left:$1_-$}] (3) at (-1,0) {};
   \node[point,label={right:$2_-$}] (4) at (1,0) {};
  \path[->]
  	(1) edge node[left] {$b_1^1$} (3) 
  	(2) edge node[right] {$b_2^2$} (4)
  	(1) edge node[pos=0.25,above] {$b_2^1$} (4) 
  	(2) edge node[pos=0.25,above] {$b_1^2$} (3) ;
  \draw[<->][dotted,red] (1) edge (2);
  \draw[<->][dotted,red] (3) edge (4);
   \end{tikzpicture}}}
\quad \\
{\sf (IV)} & {\sf (V)} & {\sf (VI)}\\
&&\\
\hline
\end{array}
\]}
\caption{The $\mathbb{Z}_2$-minimal quivers}\label{fig:Z2minimal}
\end{figure}

\medskip

\newpage

\begin{theorem} \label{thm:T(2)minimal} For each quiver $Q$ in Figure~\ref{fig:Z2minimal} (where the dotted red arrow illustrates the $g$-action on vertices), we have that the $\mathbb{Z}_2$-action on $Q$ extends to an action of the Sweedler algebra $T(2)$ on $\kk Q$ precisely as follows. We take $\gamma$ (resp., $\gamma_+$ and $\gamma_-$) to be the scalar from the $x$-action on $e_i$ (resp., on $e_{i_+}$ and on $e_{j_-}$) in  \eqref{eq:xonvertex} (resp., in \eqref{eq:xonvertex2}).

\begin{center}
{\small {\begin{tabular}{|c|lllll|}
\hline
$Q$ & $x$-action on $Q_0$ && $x$-action on $Q_1$ && \\
\hline
%&&&&&\\
{\sf (I)}  
&\begin{tabular}{l}
$x \cdot e_{1} = -\gamma(e_{1}+e_{2})$\\
 $x \cdot e_{2} = \gamma(e_{1}+e_{2})$
\end{tabular}
&& 
\hspace{-.12in} \begin{tabular}{l}
$x \cdot a_2^1 = \gamma a^1_2-\gamma \mu a^2_1+\lambda e_{1}$\\
$x\cdot a_1^2 = \gamma \mu^{-1} a^1_2-\gamma a^2_1 -\lambda \mu^{-1} e_{2}$
\end{tabular}
 &&for $\lambda \in \kk$, $\mu \in \kk^{\times}$\\
\hline
%&&&&&\\
{\sf (II)} 
&$x \cdot e_{1_+} = x \cdot e_{1_-} = 0$ 
&&$x \cdot b_1^1 = 0$&&\\
\hline
%&&&&&\\
{\sf (III)} 
& \hspace{-.1in} \begin{tabular}{l}
$x \cdot e_{1_+} =0$\\ $x \cdot e_{1_-} = -(\gamma_-)(e_{1_-}+e_{2_-})$\\
 $x \cdot e_{2_-} = (\gamma_-)(e_{1_-}+e_{2_-})$
\end{tabular}
&& 
\hspace{-.07in}\begin{tabular}{l}
$x \cdot b_1^1 = -(\gamma_-) b_1^1 +\beta \mu b_2^2$\\
$x \cdot b_2^2 = -\beta \mu^{-1} b_1^1 + (\gamma_-) b_2^2$
\end{tabular}
&& for  $\beta^2 = (\gamma_-)^2$, $\mu \in \kk^{\times}$
\\
\hline
%&&&&&\\
{\sf (IV)} 
& \hspace{-.1in} \begin{tabular}{l}
$x \cdot e_{1_+} = -(\gamma_+)(e_{1_+}+e_{2_+})$\\
$x \cdot e_{2_+} = (\gamma_+)(e_{1_+}+e_{2_+})$\\
$x \cdot e_{1_-} =0$
\end{tabular}
&& 
\hspace{-.1in} \begin{tabular}{l}
$x \cdot b_1^1 = \alpha b_1^1- (\gamma_{+}) \mu b_2^2$\\
$x \cdot b_2^2 = (\gamma_{+}) \mu^{-1} b_1^1 -\alpha b_2^2$
\end{tabular}
&& for $\alpha^2 = (\gamma_+)^2$, $\mu \in \kk^{\times}$ \\
\hline
%&&&&&\\
{\sf (V)} 
&  \hspace{-.07in}\begin{tabular}{l}
$x \cdot e_{1_+} =  -(\gamma_+)(e_{1_+}+e_{2_+})$ \\
$x \cdot e_{2_+} = (\gamma_+)(e_{1_+}+e_{2_+})$\\
$x \cdot e_{1_-} = - (\gamma_-)(e_{1_-}+e_{2_-})$\\
$x \cdot e_{2_-} = (\gamma_-)(e_{1_-}+e_{2_-})$
\end{tabular}
&& 
\hspace{-.05in}\begin{tabular}{l}
$x \cdot b_1^1 = -(\gamma_-) b_1^1 - (\gamma_{+}) \mu b_2^2$\\
$x \cdot b_2^2 = (\gamma_{+}) \mu^{-1} b_1^1 + (\gamma_-) b_2^2$
\end{tabular}
&& for $(\gamma_+)^2 = (\gamma_{-})^2$, $\mu \in \kk^{\times}$\\
\hline
%&&&&&\\
{\sf (VI)} 
&  \hspace{-.05in}\begin{tabular}{l}
$x \cdot e_{1_+} = -(\gamma_+)(e_{1_+}+e_{2_+})$ \\
$x \cdot e_{2_+} = (\gamma_+)(e_{1_+}+e_{2_+})$ \\
$x \cdot e_{1_-} = - (\gamma_-)(e_{1_-}+e_{2_-})$\\
$x \cdot e_{2_-} = (\gamma_-)(e_{1_-}+e_{2_-})$
\end{tabular}
&& 
\hspace{-.1in} \begin{tabular}{l}
$x \cdot b_1^1 =-(\gamma_-) b_1^1- (\gamma_{+}) \mu b_2^2+ \lambda b_2^1$\\
$x \cdot b_2^2 = (\gamma_{+}) \mu^{-1} b_1^1 + (\gamma_-)b^2_2 -\lambda \mu^{-1}\mu' b_1^2$\\
$x\cdot b^1_2 = (\gamma_-)b^1_2 - (\gamma_+) \mu' b^2_1 + \lambda' b^1_1$\\
$x\cdot b^2_1 = (\gamma_+)\mu'^{-1} b^1_2 - (\gamma_-)b^2_1 - \lambda' \mu \mu'^{-1} b^2_2$\\
\end{tabular}
&& 
\hspace{-.13in} \begin{tabular}{l}
for $(\gamma_+)^2 = (\gamma_{-})^2+\lambda \lambda'$\\
with $\lambda, \lambda' \in \kk$\\
$\mu, \mu' \in \kk^{\times}$ 
\end{tabular}\\
\hline
\end{tabular}}}
\end{center}
\end{theorem}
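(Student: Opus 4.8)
The plan is to treat each of the six quivers in Figure~\ref{fig:Z2minimal} by first writing down the most general $T(2)$-action permitted by the preliminary results, and then imposing the two defining relations of $T(2)$ not yet used to pin down the scalars. For a given quiver the $\mathbb{Z}_2$-action on $Q_0$ is the one indicated by the dotted red arrows, so Proposition~\ref{prop:TaftactQ0} forces the displayed $x$-action on the vertices (introducing $\gamma$, or $\gamma_+$ and $\gamma_-$, via \eqref{eq:xonvertex} and \eqref{eq:xonvertex2} with $\zeta=-1$), and in particular $x\cdot e_i=0$ on any orbit of size $1$. Next, Lemma~\ref{lem:GactkQ} gives $g\cdot a=\mu_a\,\phi(a)$ for nonzero scalars $\mu_a$, and since $g^2=1$ the relations \eqref{eq:prodmu} force the partner scalar on $g\cdot a$ to be $\mu_a^{-1}$. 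Finally, Proposition~\ref{prop:Taftactarrow}, read off from Figures~\ref{fig:xactona} and~\ref{fig:xactona2} according to the relative position of $a$ and $g\cdot a$ and the identification of $\sigma(a)$, forces $x\cdot a$ into the shape $\alpha a+\beta(g\cdot a)+\lambda\sigma(a)$ with some of $\alpha,\beta$ already expressed through $\gamma_\pm$.

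At this point the only defining relations of $T(2)$ still unused are $x^2=0$ and $xg+gx=0$, which Propositions~\ref{prop:TaftactQ0} and~\ref{prop:Taftactarrow} verified only on vertices. I would impose them on each arrow. Applying $xg+gx=0$ relates $x\cdot(g\cdot a)$ to $g\cdot(x\cdot a)$, fixing the coefficients of $x\cdot(g\cdot a)$ (e.g.\ producing the sign change between the self-coefficients of $x\cdot a$ and $x\cdot(g\cdot a)$, such as $\alpha\mapsto-\alpha$ pairing $x\cdot b^1_1$ with $x\cdot b^2_2$ in case (IV)). Then applying $x^2=0$ to an arrow and collecting coefficients yields exactly the quadratic constraints recorded in the table: none in case (I), $\beta^2=\gamma_-^2$ in (III), $\alpha^2=\gamma_+^2$ in (IV), $\gamma_+^2=\gamma_-^2$ in (V), and $\gamma_+^2=\gamma_-^2+\lambda\lambda'$ in (VI). These computations do double duty: any $T(2)$-action satisfies the relations, so the constraints are necessary, and conversely they are precisely what is needed for sufficiency.

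To organize the casework I would first prune the list. Case (II) is immediate from Example~\ref{ex:GfixesQ0}, since $\mathbb{Z}_2$ fixes both vertices and hence $x$ kills the unique arrow. Cases (III) and (IV) are opposite quivers, so by Remark~\ref{rem:Taftopp} the bijection \eqref{eq:oppact} transports the table of one to that of the other, and only one need be done. This leaves the genuinely distinct computations in (I), (III), (V), and (VI). For the converse direction, that each displayed formula really defines a module algebra, I would extend $g$ as an algebra automorphism and $x$ as a $(1,g)$-skew derivation, observe that both respect the defining relations (a)--(c) of the path algebra (which is exactly how the formulas were built), and then note it suffices to verify the operator identities $x^2=0$ and $xg+gx=0$ on the generators $Q_0\sqcup Q_1$: a short induction on path length using the skew-derivation product rule propagates both identities to all of $\kk Q$, whence the module-algebra axioms of Definition~\ref{def:Hopfact} hold.

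The main obstacle is case (VI), the complete bipartite quiver $K_{2,2}$. There $\sigma$ carries each of the four arrows to \emph{another} arrow (rather than to itself or to a trivial path), so the term $\lambda\sigma(a)$ genuinely couples the four formulas, and the $x^2=0$ computation on $b^1_1$ must be pushed through all of $b^1_1,b^2_2,b^1_2,b^2_1$. Tracking the two gluing scalars $\mu,\mu'$ together with the two skew-derivation parameters $\lambda,\lambda'$ is what produces the mixed constraint $\gamma_+^2=\gamma_-^2+\lambda\lambda'$, and this bookkeeping is the step most prone to sign and indexing errors; the remaining cases are strictly simpler specializations of the same calculation.
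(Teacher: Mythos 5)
Your proposal is correct and follows essentially the same route as the paper: vertices via Proposition~\ref{prop:TaftactQ0}, the shape of $x\cdot a$ via Lemma~\ref{lem:GactkQ} and Proposition~\ref{prop:Taftactarrow}, then imposing $xg+gx=0$ and $x^2=0$ on arrows to extract the quadratic constraints, with case {\sf (II)} handled by Example~\ref{ex:GfixesQ0}. Your only deviations are deriving {\sf (IV)} from {\sf (III)} via the opposite-quiver bijection (which the paper itself sanctions in Remark~\ref{rem:IIItoIV}, though its proof treats {\sf (IV)} directly) and spelling out the path-length induction for sufficiency, which the paper leaves implicit.
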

\medskip

\begin{proof}
The $x$-action  on vertices follows from \eqref{eq:xonvertex} and \eqref{eq:xonvertex2}.

For {\sf (I)}, we are in the situation illustrated in the second column of Figure \ref{fig:xactona2}, where additionally $g\cdot i = j$. So, we have that 
$$
x \cdot a^1_2 = \gamma a^1_2- \gamma {\mu_{1,2}} a^2_1 + \lambda e_{1}
$$
from  Proposition~\ref{prop:Taftactarrow} and Lemma~\ref{lem:GactkQ}.
Then using the relation $xg = -gx$ in $T(2)$, we find that 
\[
{\mu_{1,2}}x\cdot a^2_1= x \cdot (g \cdot a^1_2) = - g \cdot (x \cdot a^1_2) = -\gamma \mu_{1,2} a^2_1+\gamma  \mu_{1,2} \mu_{2,1} a^1_2-\lambda  \mu_{1,1} e_2.
\]
Using that $\mu_{1,2}^{-1} = \mu_{2,1}$ and $\mu_{1,1} = 1$ from Equation~\ref{eq:prodmu}, we obtain:
$$x \cdot a^2_1 = \gamma \mu_{2,1} a^1_2 - \gamma a^2_1 - \lambda \mu_{2,1} e_2.$$
The $x$-action on all other relations in $\kk Q$ yields tautologies, and the relations $x^2 = xg+gx=0$ are satisfied when applied to the arrows $a^1_2$ and $a^2_1$. Taking $\mu = \mu_{1,2}$ and $\mu^{-1} = \mu_{2,1}$, we see the action is of the claimed form.
\smallskip

For {\sf (II)}, we apply Example~\ref{ex:GfixesQ0}.
\smallskip

For {\sf (III)}, examining Figure \ref{fig:xactona}, we see that $x\cdot b_1^1 = -(\gamma_-) b_1^1 + \beta \mu_{1,1} b_2^2$ for some $\beta \in \kk$.
The relation $xg = -gx$ in $T(2)$ then gives $ \mu_{1,1} x\cdot b_2^2 = x \cdot (g\cdot b_1^1) = -g \cdot (x\cdot b_1^1) = -\beta \mu_{1,1}\mu_{2,2} b_1^1 + (\gamma_-) \mu_{1,1} b_2^2$.
Now applying the relation $x^2 =0$ of $T(2)$ to the arrow $b_1^1$ implies that $(\gamma_-)^2 = \beta^2$, and we take $\mu = \mu_{1,1}$.
\smallskip

For {\sf (IV)}, we get from Figure \ref{fig:xactona} that $x\cdot b_1^1 = \alpha b_1^1 - (\gamma_+)\mu_{1,1}b_2^2$ for some $\alpha \in \kk$.
The relation $xg = -gx$ in $T(2)$ then gives $\mu_{1,1} x\cdot b_2^2 = x \cdot (g\cdot b_1^1) = -g \cdot (x\cdot b_1^1) = -\alpha \mu_{1,1}b_2^2 + (\gamma_+)\mu_{1,1}\mu_{2,2} b_1^1$.
Now applying the relation $x^2 =0$  to the arrow $b_1^1$ implies that $(\gamma_+)^2 = \alpha^2$, and again we take $\mu = \mu_{1,1}$.
\smallskip

For {\sf (V)}, examining Figure \ref{fig:xactona} we find that $x \cdot b_1^1 = -(\gamma_-)b_1^1 - (\gamma_+)\mu_{1,1}b_2^2$.
Then the relation $xg = -gx$ in $T(2)$ gives that $\mu_{1,1}x \cdot b_2^2 = (\gamma_{+})\mu_{1,1}\mu_{2,2} b_1^1 + (\gamma_-) \mu_{1,1}b_2^2$. 
The relation $x^2=0$ applied to the arrows $b_1^1$ and $b_2^2$ implies that $(\gamma_+)^2 = (\gamma_-)^2$, and again we take $\mu = \mu_{1,1}$.
\smallskip

For {\sf (VI)}, we can see from Figure \ref{fig:xactona} that $x \cdot b_1^1 =  -(\gamma_-) b_1^1 - (\gamma_{+})\mu_{1,1}b_2^2 +\lambda b_2^1$ for some $\lambda \in \kk$. Similarly, we can see from this figure that
$x\cdot b^1_2 = (\gamma_-)b^1_2 - (\gamma_+)\mu_{1,2}b^2_1 + \lambda' b^1_1$ for some $\lambda' \in \kk$.  The relation $xg = -gx$ gives the formulas for $x\cdot b^2_2$ and $x\cdot b^2_1$ as claimed.
 Finally, the relation $x^2=0$ implies that $(\gamma_+)^2 = (\gamma_-)^2 + \lambda\lambda'$. We have taken $\mu =\mu_{1,1}$ and $\mu' = \mu_{1,2}$.
\end{proof}

\begin{remark} \label{rem:IIItoIV} Since $Q({\sf III})^{op}$ = $Q({\sf IV})$, we can get the Sweedler action on the path algebra of $Q({\sf IV})$ from its action on the path algebra of $Q({\sf III})$ using \eqref{eq:oppact}; see Remark~\ref{rem:Taftopp}. For instance, take $b^1_1 \in Q({\sf IV})$:
$$x \diamond b^1_1 ~=~ g^{-1}x \cdot b^1_1 ~=~ g^{-1} \cdot (-(\gamma_-)b^1_1+\beta \mu b^2_2) ~=~ -(\gamma_-) \mu b^2_2 + \beta b^1_1.$$
Now by identifying $\gamma_-$ with $\gamma_+$, and $\beta$ with $\alpha$, we get the desired action.
\end{remark}

%%%%%%%%%%%%%%%%%%%%%%%%%%%%%%%%%%%%%%%
%%%%%%%%%%%%%%%%%%%%%%%%%%%%%%%%%%%%%%%
%%%%%%%%%%%%%%%%%%%%%%%%%%%%%%%%%%%%%%%
\section{Taft algebra actions on path algebras of minimal quivers} \label{sec:Taft}
 
In this section, we give a complete description of $T(n)$-actions on path algebras of $\mathbb{Z}_n$-minimal quivers. 
Since the action of the grouplike elements of $T(n)$ has already been determined in Lemma~\ref{lem:GactkQ}, and the $T(n)$-action on the vertices has already been determined in Proposition \ref{prop:TaftactQ0}, all that remains is to describe possible actions of $x$ on the arrows.

\subsection{Taft actions on Type A $\mathbb{Z}_n$-minimal quivers} 
Let $Q$ be a $\mathbb{Z}_n$-minimal quiver of Type A, so that $Q$ is a subquiver of $K_m$ for some $m|n$ by Definition \ref{def:typeAB}.
Recall that the path algebra $\kk K_m$ has basis $a^i_j$ for $1 \leq i, j \leq m$, with $a^i_i=e_i$ being the trivial path at vertex $i$.

\begin{theorem} \label{thm:T(n)typeA} Retain the notation above.
Any $T(n)$-action on the path algebra of  Type $A$ $\mathbb{Z}_n$-minimal quiver $Q$ is given by
\begin{equation} \label{eq:A}
x\cdot a^i_j ~=~ \gamma(\zeta^j a^i_j - \zeta^{i+1} \mu_{i, j} a^{i+1}_{j+1}) + \lambda_{i, j} a^i_{j+1}.
\end{equation}
where $\gamma \in \kk$ from \eqref{eq:xonvertex}, $\mu_{i,j} \in \kk^{\times}$ from \eqref{eq:prodmu}, and $\lambda_{i, j} \in \kk$ are all scalars satisfying 
\begin{itemize}
\item $\mu_{i,i} = 1$ for all $i$, and $\prod_{\ell=0}^{n-1} \mu_{i+\ell, j+\ell} = 1$;
\item $\lambda_{i, j} = 0$ if either $i=j$ or the arrow $a^i_{j+1}$ does not exist in $Q$; and
\item $\lambda_{i+1,j+1} \mu_{i,j} = \zeta \lambda_{i,j}\mu_{i,j+1}$.
\end{itemize}
The superindices and subindices of arrows and scalars are taken modulo $m$.  
\end{theorem}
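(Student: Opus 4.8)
The plan is to establish the displayed parametrization in both directions: that every $T(n)$-action on $\kk Q$ has the form \eqref{eq:A} with the three listed constraints, and conversely that every such constrained choice of scalars genuinely defines an action (so that, in particular, $x^n=0$ imposes no condition beyond the three already listed).

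\textbf{Forcing the form and the first two constraints.} Because $Q\subseteq K_m$ is $\mathbb{Z}_n$-stable with a single orbit of vertices, Lemma~\ref{lem:GactkQ} gives $g\cdot a^i_j=\mu_{i,j}a^{i+1}_{j+1}$ with scalars subject to \eqref{eq:prodmu}, which is the first constraint; and Proposition~\ref{prop:TaftactQ0} produces a single scalar $\gamma$ (forced to be $0$ when $m<n$) with $x\cdot e_i=\gamma\zeta^i(e_i-\zeta e_{i+1})$. Since $s(a^i_j)$ and $t(a^i_j)$ lie in the same orbit, $\gamma_+=\gamma_-=\gamma$ in Proposition~\ref{prop:Taftactarrow}; reading that proposition off case by case, with $\sigma(a^i_j)=a^i_{j+1}$ under the convention $a^i_i=e_i$, yields precisely \eqref{eq:A}, where $\lambda_{i,j}$ is the coefficient of $\sigma(a^i_j)$. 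As $x\cdot a^i_j$ must land in $\kk Q$ and $a^i_{j+1}$ is a basis vector distinct from $a^i_j$ and $a^{i+1}_{j+1}$, we must take $\lambda_{i,j}=0$ when $a^i_{j+1}\notin Q$; and comparing \eqref{eq:A} at $i=j$ with the vertex formula forces $\lambda_{i,i}=0$. This is the second constraint.

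\textbf{The third constraint and the converse on the low-degree relations.} Applying $xg=\zeta gx$ to $a^i_j$ and expanding both sides via \eqref{eq:A} and $g\cdot a^i_j=\mu_{i,j}a^{i+1}_{j+1}$, the coefficients of $a^{i+1}_{j+1}$ and of $a^{i+2}_{j+2}$ match identically, while matching the coefficient of $a^{i+1}_{j+2}$ gives $\mu_{i,j}\lambda_{i+1,j+1}=\zeta\lambda_{i,j}\mu_{i,j+1}$, the third constraint. For the converse, scalars obeying the three constraints do define an action: $g$ acts as an order-$n$ algebra automorphism by \eqref{eq:prodmu}, the operator identity $xg=\zeta gx$ is exactly the third constraint read backwards, and the twisted Leibniz rule $x\cdot(pq)=p\,(x\cdot q)+(x\cdot p)(g\cdot q)$ extends \eqref{eq:A} consistently with the defining relations of $\kk Q$ (which live in degrees $\le 1$), just as in Propositions~\ref{prop:TaftactQ0} and~\ref{prop:Taftactarrow}.

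\textbf{The relation $x^n=0$, the main obstacle.} It remains to verify $x^n=0$, and the point is that the three constraints already force it. Since $\zeta$ is a primitive $n$-th root of unity, the Gauss binomials $\binom{n}{k}_{\zeta}$ vanish for $0<k<n$, so by \cite[Lemma~7.3.1]{MR2894855} one has $\Delta(x^n)=x^n\otimes g^n+1\otimes x^n=x^n\otimes 1+1\otimes x^n$; hence the operator $x^n$ is an \emph{ordinary} derivation of $\kk Q$. As $x^n$ kills the vertices by Lemma~\ref{lem:x on ei}, and $x^n\cdot a^i_j=x^n\cdot(e_ia^i_je_j)$ lies in $e_i(\kk Q)e_j$ and in filtration degree $\le 1$, we get $x^n\cdot a^i_j=\kappa_{i,j}\,a^i_j$ for a scalar $\kappa_{i,j}$, so it suffices to show $\kappa_{i,j}=0$. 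To do this I would split $x=X_{\mathrm{diag}}+X_\lambda$, where $X_{\mathrm{diag}}$ is the $\gamma$-part, which preserves the diagonal $d=j-i$, and $X_\lambda\colon a^i_j\mapsto\lambda_{i,j}a^i_{j+1}$ raises $d$ by $1$ modulo $m$. In a length-$n$ word in these two operators, the image meets $a^i_j$ only if the number of $X_\lambda$-factors is $\equiv 0\pmod m$. The delicate part is to check that the surviving contributions vanish: when $m<n$ we have $\gamma=0$, so only $X_\lambda^{\,n}$ survives, and $X_\lambda^{\,n}\cdot a^i_j=\bigl(\prod_{k=0}^{n-1}\lambda_{i,j+k}\bigr)a^i_j=0$ because the product contains the factor $\lambda_{i,i}=0$; when $m=n$ the only words with $\#X_\lambda\equiv 0\pmod n$ are $X_{\mathrm{diag}}^{\,n}$ and $X_\lambda^{\,n}$, the latter vanishing as before, and the former vanishing because the first constraint makes the cyclic rescaling $\tilde a^i:=c_i\,a^i_{i+d}$ (with $c_{i+1}=\zeta^{-d}\mu_{i,i+d}c_i$) solvable, conjugating $X_{\mathrm{diag}}$ on the diagonal $d$ into the vertex operator of Lemma~\ref{lem:x on ei} with parameter $\gamma\zeta^d$. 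I expect the bookkeeping of this diagonal decomposition to be the most delicate point, and would isolate it as a short computational lemma alongside Lemma~\ref{lem:x on ei}.
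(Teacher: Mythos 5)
Your proposal is correct, and the derivation of \eqref{eq:A} together with the three constraints follows the paper's proof essentially verbatim: Lemma~\ref{lem:GactkQ} and Propositions~\ref{prop:TaftactQ0}, \ref{prop:Taftactarrow} force the form and the first two bullets, and matching the coefficient of $a^{i+1}_{j+2}$ in $xg\cdot a^i_j=\zeta gx\cdot a^i_j$ gives the third. Where you genuinely diverge is the verification that $x^n$ acts by zero. The paper (Lemma~\ref{lem:xact0A}) proves a closed-form expansion of $x^k\cdot a^i_j$ for all $k$ by induction (Lemma~\ref{lem:powerx}), with coefficients built from complete symmetric polynomials, and then kills every term using the principal specialization of $h_a$ as a $q$-binomial coefficient. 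You instead note that the vanishing of the $\zeta$-binomials makes $x^n$ act as an \emph{ordinary} derivation killing the idempotents, which pins $x^n\cdot a^i_j$ down to a scalar multiple of $a^i_j$; the $\mathbb{Z}_m$-grading by diagonals $j-i$ then leaves only the two extreme words $X_{\mathrm{diag}}^{\,n}$ and $X_\lambda^{\,n}$, the latter dying because $\lambda_{i,i}=0$ occurs in the product (here $m\mid n$ is used), and the former being conjugate, via a rescaling whose cyclic consistency is exactly $\prod_\ell\mu_{i+\ell,j+\ell}=1$, to the vertex operator of Lemma~\ref{lem:x on ei}. So you still lean on the symmetric-function computation for the vertex case, but you avoid re-running the full induction of Lemma~\ref{lem:powerx} for arrows; your route is shorter and more structural for Type A. What the paper's heavier machinery buys is reusability: the same expansion is what produces, in Theorem~\ref{thm:T(n)typeB}, the nontrivial extra condition $(\gamma_+)^n=(\gamma_-)^n+\prod_{\ell}\lambda_{i,j+\ell}$ rather than automatic vanishing — though your derivation-plus-diagonals argument would adapt to that case as well, with the surviving coefficient of $b^i_j$ coming out as $(\gamma_-)^n-(\gamma_+)^n+\prod_{\ell}\lambda_{i,j+\ell}$.
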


\begin{proof}
We have already used the relations of the path algebra to find that
$$x \cdot a^i_j ~=~ \gamma (\zeta^j a^i_j - \zeta^{i+1} \mu_{i,j} a^{i+1}_{j+1}) + \lambda_{i,j} a^i_{j+1},$$
for some scalars $\mu_{i,j} \in \kk^{\times}$ and $\lambda_{i,j}  \in \kk$ by Lemma~\ref{lem:GactkQ}, Propositions~\ref{prop:TaftactQ0} and~\ref{prop:Taftactarrow}. Namely,  the case of $i=j$ is Proposition~\ref{prop:TaftactQ0}, which gives $\lambda_{i, i}=0$ and $\mu_{i,i}=1$ for all $i$. Moreover, the case of $i \neq j$ is Proposition~\ref{prop:Taftactarrow}, which gives that $\lambda_{i,j} = 0$ if the arrow $a^i_{j+1}$ does not exist in $Q$.  The condition on the $\{\mu_{i,j}\}$ is from \eqref{eq:prodmu}.

The relation $xg = \zeta gx$ of $T(n)$ applied to $a^i_j$ gives on the one hand that
\[
xg\cdot a^i_j ~=~ x \cdot \mu_{i,j} a^{i+1}_{j+1} ~=~ \gamma \mu_{i,j} (\zeta^{j+1} a^{i+1}_{j+1} - \zeta^{i+2}  \mu_{i+1,j+1} a^{i+2}_{j+2}) + \lambda_{i+1, j+1} \mu_{i,j} a^{i+1}_{j+2},
\]
while on the other hand that
\[
\zeta gx\cdot a^i_j ~=~ \zeta\gamma(\zeta^j \mu_{i,j} a^{i+1}_{j+1} - \zeta^{i+1} \mu_{i,j} \mu_{i+1,j+1}a^{i+2}_{j+2}) + \zeta\lambda_{i, j}\mu_{i,j+1} a^{i+1}_{j+2}.
\]
Thus we see that $\lambda_{i+1,j+1} \mu_{i,j} = \zeta \lambda_{i,j}\mu_{i,j+1}$, which is the third condition on the scalars.
We also obtain that the relation $x^n=0$ applied to $a^i_j$ imposes no further restrictions on the $x$-action on $a^i_j$; this is verified in Lemma~\ref{lem:xact0A} of the appendix.
\end{proof}

\subsection{Taft actions on Type B $\mathbb{Z}_n$-minimal quivers}

In this subsection, let $Q$ be a $\mathbb{Z}_n$-minimal quiver of Type $B$.  By Definition~\ref{def:typeAB}, we know that $Q$ is a subquiver of $K_{m, m'}$ for some positive integers $m, m'$ both dividing $n$.  Recall that the path algebra $K_{m, m'}$ has basis $e_{i_+}$, $e_{j_-}$, $b^i_j$ where $1 \leq i \leq m$ and $1 \leq j \leq m'$.  

\begin{theorem} \label{thm:T(n)typeB} Retain the notation above.
Any $T(n)$-action on the path algebra of  Type $B$ $\mathbb{Z}_n$-minimal quiver $Q$ is given by

\begin{equation} \label{eq:B}
x\cdot b^i_j ~=~
(\gamma_-)\zeta^j b^i_j - (\gamma_+)\mu_{i,j}\zeta^{i+1} b^{i+1}_{j+1} + \lambda_{i,j} b^i_{j+1}
\end{equation}
where $\gamma_+, \gamma_- \in \kk$ from \eqref{eq:xonvertex2}, $\mu_{i,j} \in \kk^{\times}$ from \eqref{eq:prodmu}, and $\lambda_{i, j} \in \kk$ are all scalars satisfying 
\begin{itemize}
\item $\prod_{\ell=0}^{n-1} \mu_{i +\ell, j + \ell} =1$;
\item $\lambda_{i,j} = 0$ if the arrow $b^i_{j+1}$ does not exist in $Q$;
\item $\lambda_{i+1,j+1}\mu_{i,j} = \zeta \lambda_{i,j}\mu_{i,j+1}$;
\item $(\gamma_+)^n = (\gamma_-)^n + \prod_{\ell = 0}^{n-1} \lambda_{i, j+\ell}$.
\end{itemize} 
The superindices of arrows are taken modulo $m$ and the subindices of arrows are taken modulo $m'$.
\end{theorem}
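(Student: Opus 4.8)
The plan is to follow the proof of Theorem~\ref{thm:T(n)typeA}, now exploiting that in Type B the sources and targets of arrows lie in two \emph{separate} $\mathbb{Z}_n$-orbits — the top row and the bottom row of $K_{m,m'}$ — so that Proposition~\ref{prop:TaftactQ0} supplies two independent scalars $\gamma_+$ and $\gamma_-$ recording the $x$-action on $e_{i_+}$ and on $e_{j_-}$. First I would apply Lemma~\ref{lem:GactkQ} to write $g\cdot b^i_j = \mu_{i,j}b^{i+1}_{j+1}$ with the $\mu_{i,j}$ constrained by \eqref{eq:prodmu}, and then invoke Proposition~\ref{prop:Taftactarrow}: in the generic configuration of the rightmost column of Figure~\ref{fig:xactona}, where $\sigma(b^i_j) = b^i_{j+1}$, this forces $x\cdot b^i_j$ to have exactly the shape of \eqref{eq:B}, with $\lambda_{i,j} = 0$ whenever the arrow $b^i_{j+1}$ is absent from $Q$. (The degenerate cases $m=1$ or $m'=1$, in which $g$ fixes all sources or all targets, correspond to the first two columns of Figure~\ref{fig:xactona} and are absorbed into the same formula.) This already establishes the first two bulleted conditions.

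Next I would impose the relation $xg=\zeta gx$ on each $b^i_j$. Expanding $xg\cdot b^i_j = x\cdot(\mu_{i,j}b^{i+1}_{j+1})$ and $\zeta gx\cdot b^i_j$ via \eqref{eq:B}, the coefficients of $b^{i+1}_{j+1}$ and $b^{i+2}_{j+2}$ agree automatically once $\prod_\ell \mu_{i+\ell,j+\ell}=1$ is used, while equating the coefficients of $b^{i+1}_{j+2}$ gives precisely $\lambda_{i+1,j+1}\mu_{i,j} = \zeta\lambda_{i,j}\mu_{i,j+1}$, the third bulleted condition. This is a routine coefficient comparison, identical in spirit to the corresponding step for Type A.

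The heart of the argument, and the main obstacle, is the relation $x^n=0$; this is exactly where Type B differs from Type A, producing the genuinely new fourth condition $(\gamma_+)^n = (\gamma_-)^n + \prod_{\ell=0}^{n-1}\lambda_{i,j+\ell}$ (whereas in Type A the common orbit forces the analogous product to vanish, so nothing new is imposed). I would expand $x^n\cdot b^i_j$ as a sum over length-$n$ words in the three elementary moves read off from \eqref{eq:B}: the \emph{diagonal} move (coefficient $(\gamma_-)\zeta^{j}$, fixing the arrow), the \emph{$g$-shift} move (coefficient $-(\gamma_+)\mu_{i,j}\zeta^{i+1}$, advancing both indices), and the \emph{$\sigma$-move} (coefficient $\lambda_{i,j}$, advancing only the lower index). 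Since $m,m'\mid n$, the three pure words all return to $b^i_j$ and contribute $(\gamma_-)^n$, $-(\gamma_+)^n$, and $\prod_{\ell}\lambda_{i,j+\ell}$ respectively; the sign and all $\zeta$-powers collapse using $\zeta^n=1$, $\prod_\ell\mu_{i+\ell,j+\ell}=1$, and the exponent computation $\sum_{k=0}^{n-1}(i+k+1) = n(i+1)+\tfrac{n(n-1)}{2}$. The difficulty is to show that all \emph{mixed} words cancel and that every coefficient other than that of $b^i_j$ vanishes, so that $x^n\cdot b^i_j=0$ collapses to the single scalar equation above; I expect to discharge this bookkeeping through symmetric-function identities in an appendix lemma, the exact analogue of Lemma~\ref{lem:xact0A}. (The third condition guarantees that the resulting equation is consistent as $(i,j)$ ranges over the orbit.)

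Finally, running the argument in reverse gives sufficiency: any scalars satisfying the four conditions make \eqref{eq:B} compatible with all defining relations $g^n=1$, $xg=\zeta gx$, and $x^n=0$ of $T(n)$, and the module-algebra axioms impose nothing further — in a bipartite quiver every product of two arrows is zero, and one checks directly that both $x\cdot(b^i_j b^{i'}_{j'})$ and $\sum (x_1\cdot b^i_j)(x_2\cdot b^{i'}_{j'})$ vanish. Hence \eqref{eq:B} defines a genuine $T(n)$-action precisely under the stated constraints.
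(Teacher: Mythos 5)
Your proposal is correct and follows essentially the same route as the paper: derive the shape of \eqref{eq:B} from Lemma~\ref{lem:GactkQ} and Propositions~\ref{prop:TaftactQ0}, \ref{prop:Taftactarrow}, extract the third condition from $xg=\zeta gx$ by coefficient comparison, and reduce $x^n=0$ to the single scalar equation $(\gamma_+)^n=(\gamma_-)^n+\prod_\ell\lambda_{i,j+\ell}$ via a word expansion whose mixed terms vanish by symmetric-function identities. The appendix lemma you anticipate is exactly the paper's Lemma~\ref{lem:xact0B}, which rests on the general expansion of Lemma~\ref{lem:powerx} and the vanishing of $h_s(1,\zeta,\dotsc,\zeta^{t-s})\,h_{n-t}(1,\zeta,\dotsc,\zeta^{t})$ from Lemma~\ref{lem:hvanish}.
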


\begin{proof}
The formula \eqref{eq:B} and first three conditions on the scalars are derived exactly as in the proof of Theorem~\ref{thm:T(n)typeA}, replacing $a^{\star}_{\star}$ with $b^{\star}_{\star}$ and $\gamma$ with $\gamma_{\pm}$, appropriately.  It just remains to check that $x^n$ acts by 0; this is equivalent to the last condition on the scalars, as shown in Lemma \ref{lem:xact0B}.
\end{proof}

%%%%%%%%%%%%%%%%%%%%%%%%%%%%%%%%%%%%%%%
%%%%%%%%%%%%%%%%%%%%%%%%%%%%%%%%%%%%%%%
%%%%%%%%%%%%%%%%%%%%%%%%%%%%%%%%%%%%%%%
\section{Gluing Taft algebra actions on minimal quivers} \label{sec:gluing}
 
\subsection{Gluing actions from components}
In this section, we provide a recipe for gluing actions of Taft algebras on minimal quivers. We also show that, given any quiver with $\mathbb{Z}_n$-symmetry, one can construct an inner faithful extended action of $T(n)$ on the path algebra of this quiver.

To begin, consider the following definition.

\begin{definition}[$\mathbb{Z}_n$-component] \label{def:component}
Let $Q$ be a quiver with an action of $\mathbb{Z}_n$, and consider the set of $\mathbb{Z}_n$-minimal subquivers of $Q$, partially ordered by inclusion.
We say that a $\mathbb{Z}_n$-minimal subquiver of $Q$ is a {\it $\mathbb{Z}_n$-component of $Q$} if it is maximal in the given ordering.
\end{definition}

\begin{lemma} \label{lem:Gminl}
Fix an action of  $\mathbb{Z}_n$ on a quiver $Q$. Then, there  exists a collection of $\mathbb{Z}_n$-components of $Q$, unique up to relabeling, such that $Q$ is obtained by gluing this collection.
\end{lemma}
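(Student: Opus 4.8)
The plan is to produce the decomposition explicitly from the orbit structure of the $\mathbb{Z}_n$-action on vertices, and then identify the pieces so produced with the $\mathbb{Z}_n$-components. Write $Q_0 = O_1 \sqcup \cdots \sqcup O_k$ as the disjoint union of its $\mathbb{Z}_n$-orbits of vertices. For each ordered pair of orbits $(O, O')$, let $P_{O,O'}$ denote the subquiver of $Q$ whose vertex set is $O \cup O'$ and whose arrows are all arrows of $Q$ with source in $O$ and target in $O'$. Because $\mathbb{Z}_n$ carries an arrow $a$ to an arrow $g \cdot a$ with $s(g\cdot a) = g\cdot s(a)$ and $t(g\cdot a) = g\cdot t(a)$, each $P_{O,O'}$ is $\mathbb{Z}_n$-stable; and since $Q$ is Schurian and loopless, the arrows from $O$ to $O'$ form a subset of the ordered pairs of distinct vertices, so $P_{O,O'}$ embeds $\mathbb{Z}_n$-equivariantly into $K_{|O|}$ when $O = O'$ and into $K_{|O|,|O'|}$ when $O \neq O'$. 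As orbit sizes divide $n$, this exhibits each $P_{O,O'}$ containing an arrow as $\mathbb{Z}_n$-minimal, of Type A or Type B respectively.

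The technical heart is the structural claim that any $\mathbb{Z}_n$-minimal subquiver $R \subseteq Q$ with at least one arrow has all its arrows supported on a single ordered pair of orbits. The vertex set of $R$ is $\mathbb{Z}_n$-stable, hence a union of $\mathbb{Z}_n$-orbits of $Q$. If $R$ is a subquiver of $K_m$ (Type A), its vertices lie among those of $K_m$, which form a single $\mathbb{Z}_n$-orbit, so $R$'s vertices comprise exactly one orbit $O$ and every arrow runs $O \to O$. If $R$ is a subquiver of $K_{m,m'}$ (Type B), its vertices lie among those of $K_{m,m'}$, which form exactly two $\mathbb{Z}_n$-orbits, the top (size $m$) and bottom (size $m'$); a subquiver with an arrow must then have vertex set equal to these two \emph{distinct} orbits $O \neq O'$, with every arrow running $O \to O'$. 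In either case $R \subseteq P_{O,O'}$ for the pair $(O,O')$ determined by any arrow of $R$. Consequently each $P_{O,O'}$ is maximal among $\mathbb{Z}_n$-minimal subquivers, i.e.\ a $\mathbb{Z}_n$-component, and conversely every $\mathbb{Z}_n$-component equals some $P_{O,O'}$.

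For existence, I would take the collection $\{P_{O,O'}\}$ ranging over the ordered pairs of orbits joined by at least one arrow. Every arrow of $Q$ lies in exactly one such $P_{O,O'}$, namely the pair determined by its source and target orbits, so the collection covers the arrows; distinct members have disjoint arrow sets, whence $P_{O_1,O_1'} \cap P_{O_2,O_2'}$ consists only of vertices. This is precisely the gluing condition, giving $Q = P_{O_1,O_1'} \circledast \cdots \circledast P_{O_r,O_r'}$ once every vertex is accounted for. For uniqueness up to relabeling, the second paragraph identifies the set of $\mathbb{Z}_n$-components with the canonical set $\{P_{O,O'}\}$, and any gluing decomposition into components must use each of them, since each is forced as the unique component containing the arrows it carries; so the collection is determined.

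The main obstacle I anticipate is the structural claim, together with the bookkeeping of vertices lying on no arrow. The claim demands care that the $\mathbb{Z}_n$-action inherited from $Q$ is the one compatible with the rotation action on $K_m$ or $K_{m,m'}$ in Definition~\ref{def:typeAB}, so that equivariance and $\mathbb{Z}_n$-stability genuinely force the single orbit $O$ (resp.\ the two distinct orbits $O \neq O'$). The remaining nuisance is isolated orbits, carrying no arrows: these must either be adjoined to the decomposition as arrowless trivial components, or one first reduces to the full subquiver spanned by vertices incident to some arrow. I would dispatch this with a short separate remark, so that the cover $Q = \bigcup P_{O,O'}$ is genuinely all of $Q_0$ and the gluing statement holds on the nose.
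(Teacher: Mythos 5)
Your argument is correct, and it reaches the conclusion by a more constructive route than the paper. The paper's proof stays entirely inside the poset of $\mathbb{Z}_n$-minimal subquivers: components exist and are unique because they are the maximal elements of a finite poset; every arrow lies in some minimal subquiver, so the components cover $Q$; and if two minimal subquivers share an arrow, they have the same vertex set (Type A forces a single orbit, Type B forces the two orbits of the shared arrow's source and target), so their union is again minimal and maximality forces distinct components to meet only in vertices. Your proof isolates exactly the same structural fact --- that a minimal subquiver with an arrow is supported on the single ordered pair of orbits $(O,O')$ determined by any of its arrows --- but then uses it to identify the components explicitly as the quivers $P_{O,O'}$ of all arrows from one orbit to another. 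What this buys is an explicit description of the decomposition (each component is ``all arrows between a fixed ordered pair of vertex orbits''), which the paper never states but which is visibly what is being used in the worked examples of Section 7; what it costs is the extra bookkeeping you flag at the end. On that last point you are right to be careful: orbits incident to no arrow are not covered by the $P_{O,O'}$ with arrows, and the paper's own proof has the same blind spot (its covering argument only accounts for arrows). Either adjoining arrowless components or restricting to the arrow-supported part of $Q$, as you propose, closes this gap; with that remark in place your proof is complete.
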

\begin{proof}The $\mathbb{Z}_n$-components of $Q$ exist and are uniquely determined by the definition because they are the maximal elements of a finite poset.  Each arrow of $Q$ lies in some $\mathbb{Z}_n$-minimal subquiver, so the $\mathbb{Z}_n$-components cover $Q$. So, it suffices to show that the intersection of two distinct $\mathbb{Z}_n$-components consists entirely of vertices.  If $Q^1$ and $Q^2$ are two $\mathbb{Z}_n$-minimal subquivers such that $Q^1 \cap Q^2$ contains an arrow, then we can see from the definition of minimality that $Q^1$ and $Q^2$ have the same set of vertices. 
Thus, $Q^1 \cup Q^2$ is a $\mathbb{Z}_n$-minimal subquiver of $Q$.  Repeat this process to conclude that, by maximality, any two distinct $\mathbb{Z}_n$-components can only have vertices in their intersection.
\end{proof}

A visualization of the result above can be found in \textsc{Step 1} of Examples~\ref{ex:gluing} and~\ref{ex:gluing3} below.

\begin{lemma}\label{lem:restrict}
Any $T(n)$-action on a path algebra $\kk Q$ restricts to an action on the path algebra of each $\mathbb{Z}_n$-component of $Q$.
\end{lemma}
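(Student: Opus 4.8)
The plan is to show that $\kk Q^c$ — the subalgebra of $\kk Q$ spanned by the paths lying entirely in a fixed $\mathbb{Z}_n$-component $Q^c$ — is a $T(n)$-submodule; being a subalgebra on which the multiplication and comultiplication agree with those of $\kk Q$, it will then inherit the structure of a $T(n)$-module algebra once the unit axiom is verified. Two reductions organize the work. First, $\{h \in T(n) : h \cdot \kk Q^c \subseteq \kk Q^c\}$ is a subalgebra of $T(n)$ (closure under multiplication is $(h_1h_2)\cdot m = h_1\cdot(h_2\cdot m)$), so it is enough to check that $g$ and $x$ preserve $\kk Q^c$. Second, since $g$ acts as an algebra automorphism of $\kk Q$ and $x$ acts as a skew-derivation with $x \cdot (pq) = p(x \cdot q) + (x \cdot p)(g \cdot q)$ (because $\Delta(x) = 1 \otimes x + x \otimes g$), it suffices to check that $g$ and $x$ send the algebra generators $(Q^c)_0 \sqcup (Q^c)_1$ of $\kk Q^c$ into $\kk Q^c$; invariance then propagates to all products by induction on path length.

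For $g$ this is immediate: $Q^c$ is $\mathbb{Z}_n$-stable, so $g$ permutes $(Q^c)_0$ and, by Lemma~\ref{lem:GactkQ}, sends each arrow of $Q^c$ to a scalar multiple of an arrow of $Q^c$. For $x$ acting on a vertex $e_i$ of $Q^c$, Equation~\eqref{eq:xonvertex} of Proposition~\ref{prop:TaftactQ0} gives $x \cdot e_i \in \mathrm{span}_\kk\{e_i, g\cdot e_i\}$, and both $e_i$ and $g \cdot e_i$ lie in the same $\mathbb{Z}_n$-orbit, hence in $(Q^c)_0$. For $x$ acting on an arrow $a$ of $Q^c$, Proposition~\ref{prop:Taftactarrow} gives $x \cdot a = \alpha a + \beta(g\cdot a) + \lambda\sigma(a)$; here $a$ and $g\cdot a$ are scalar multiples of arrows of $Q^c$, so the only nontrivial point is to show $\sigma(a) \in \kk Q^c$.

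The crux, and the step I expect to be the main obstacle, is therefore to prove $\sigma(a) \in \kk Q^c$. If $\sigma(a)$ is a trivial path, or equals $a$ or $g\cdot a$ (the degenerate cases in Notation~\ref{not:sigma}), this is clear. Otherwise $\sigma(a)$ is a genuine arrow $b$ of $Q$ with $s(b) = s(a)$ and $t(b) = g\cdot t(a)$, so its source lies in the $\mathbb{Z}_n$-orbit of $s(a)$ and its target in the orbit of $t(a)$; these two orbits are exactly the vertices of $Q^c$, since $Q^c$ is $\mathbb{Z}_n$-minimal of Type A or B (Definition~\ref{def:typeAB}). I would then argue by maximality of $\mathbb{Z}_n$-components (Definition~\ref{def:component}): adjoining the entire $\mathbb{Z}_n$-orbit of $b$ to $Q^c$ produces a subquiver with the same vertex set (the orbit of $b$ has all endpoints in those two orbits), which is again $\mathbb{Z}_n$-stable and still sits inside the relevant $K_m$ (Type A) or $K_{m,m'}$ (Type B), hence is again $\mathbb{Z}_n$-minimal. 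As $Q^c$ is maximal among such subquivers, $b$ must already be an arrow of $Q^c$, so $\sigma(a) \in \kk Q^c$ and the invariance check on generators is complete.

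Finally, I would confirm that the restricted action makes $\kk Q^c$ a $T(n)$-module algebra by verifying the unit axiom $h \cdot 1_c = \varepsilon(h) 1_c$ for $1_c := \sum_{i \in (Q^c)_0} e_i$. Here $g \cdot 1_c = 1_c = \varepsilon(g)1_c$ because $g$ permutes $(Q^c)_0$, and $x \cdot 1_c = 0 = \varepsilon(x)1_c$ because $(Q^c)_0$ is a union of full $\mathbb{Z}_n$-orbits and the sum of $x \cdot e_i$ over each orbit telescopes to zero, exactly as in Equation~\eqref{eq:x dot 1}. The remaining module-algebra axioms are inherited from $\kk Q$, so the restriction is a bona fide $T(n)$-action on $\kk Q^c$.
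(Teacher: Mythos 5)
Your proposal is correct and follows essentially the same route as the paper: $g$-stability is immediate from $\mathbb{Z}_n$-stability, and the whole matter reduces via Proposition~\ref{prop:Taftactarrow} to showing $\sigma(a)$ lies in the component, which both you and the paper settle by the maximality of $\mathbb{Z}_n$-components (you adjoin the orbit of $\sigma(a)$ directly, the paper takes the union with the component containing $\sigma(a)$ — the same argument in substance). Your extra verifications (reduction to algebra generators, the unit axiom for the restricted action) are details the paper leaves implicit but do not change the approach.
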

\begin{proof}
Let $Q^i$ be a $\mathbb{Z}_n$-component of $Q$.  Since $Q^i$ is $\mathbb{Z}_n$-minimal, by definition $\kk Q^i$ is stable under the action of $g$, so it suffices to show that $\kk Q^i$ is stable under the action of $x$.  From Proposition \ref{prop:Taftactarrow}, it is enough to see that $\sigma(a) \in Q^i$ when $a \in Q^i$. Suppose not, that is, $\sigma(a) \in Q^j$ for some $j \neq i$. Then, $Q^i \cup Q^j$ is a $\mathbb{Z}_n$-minimal quiver, which contradicts the maximality of the $\mathbb{Z}_n$-component $Q^i$.
\end{proof}

\begin{definition}[Compatibility] \label{def:compatible}
Let $Q$ be a quiver and $Q^1, \dotsc, Q^r \subseteq Q$ a collection of subquivers of $Q$.  Suppose that we have a $T(n)$-action on each path algebra $\kk Q^i$.  We say that this collection of $T(n)$-actions is \emph{compatible} if, for each pair $(i,j)$, the restriction of the actions on $\kk Q^i$ and  on $\kk Q^j$ to $\kk[Q^i \cap Q^j]$ are the same. 
\end{definition}

Now we have our main result of the section.

\begin{theorem} \label{thm:glue}
Let $Q$ be a quiver with $\mathbb{Z}_n$-action.  The $T(n)$-actions on the path algebra of $Q$ extending the given $\mathbb{Z}_n$-action are in bijection with compatible collections of $T(n)$-actions on path algebras of the $\mathbb{Z}_n$-components of $Q$.
\end{theorem}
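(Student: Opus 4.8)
The plan is to exhibit two mutually inverse maps: a \emph{restriction} map sending a $T(n)$-action on $\kk Q$ to the tuple of its restrictions to the $\mathbb{Z}_n$-components, and a \emph{gluing} map in the other direction. By Lemma~\ref{lem:Gminl} I may fix the decomposition $Q = Q^1 \circledast \cdots \circledast Q^r$ into $\mathbb{Z}_n$-components; since distinct components meet only in vertices, every arrow of $Q$ lies in exactly one component, while a vertex may lie in several. I would record at the outset the elementary determinacy principle that a $T(n)$-module algebra structure on any algebra $A$ is completely determined by the action of $g$ and $x$ on a generating set of $A$: this follows by induction on word length from the axioms $h\cdot(pq)=\sum (h_1\cdot p)(h_2\cdot q)$ and $h\cdot 1_A = \varepsilon(h)1_A$ of Definition~\ref{def:Hopfact}. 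In particular, each action in question is pinned down by its values on the vertices and arrows of $Q$.

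For the restriction map, Lemma~\ref{lem:restrict} already guarantees that any $T(n)$-action on $\kk Q$ restricts to an action on each $\kk Q^i$. These restrictions are automatically compatible in the sense of Definition~\ref{def:compatible}: on $Q^i \cap Q^j$, which consists only of vertices, the two restrictions agree because they are computed from one and the same global action. This produces a well-defined compatible collection.

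The heart of the argument, and the step I expect to be the main obstacle, is the gluing map. Given a compatible collection of actions $\{\cdot_i\}$ on the $\kk Q^i$, I would use the presentation recorded in the Remark following the gluing definition, namely $\kk Q \cong (\kk Q^1 \ast \cdots \ast \kk Q^r)/I$, where $I$ is the two-sided ideal generated by the vertex differences $e_{i,v}-e_{j,v}$. The ordinary free product of $T(n)$-module algebras is again a $T(n)$-module algebra (it is the coproduct in the category of $T(n)$-module algebras, with underlying algebra the usual free product since $\otimes_{\kk}$ preserves colimits; concretely, $T(n)$ acts diagonally on tensor factors via $\Delta$, and concatenation is equivariant by coassociativity). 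So it remains to check that $I$ is stable under $T(n)$, after which the quotient inherits the module-algebra structure and defines the glued action. Stability need only be verified on the generators of $I$ and then propagates to all of $I$, because $h\cdot(psq)=\sum (h_1\cdot p)(h_2\cdot s)(h_3\cdot q)$ keeps the middle factor inside the two-sided ideal. On a generator, $g\cdot(e_{i,v}-e_{j,v}) = e_{i,g\cdot v}-e_{j,g\cdot v}\in I$, while \emph{compatibility} is exactly what forces $x\cdot e_{i,v}$ and $x\cdot e_{j,v}$ to have identical expansions $\sum_w c_w e_{i,w}$ and $\sum_w c_w e_{j,w}$ in the orbit idempotents of $v$ (the form dictated by Proposition~\ref{prop:TaftactQ0}), so that their difference again lies in $I$. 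This is the only point where the compatibility hypothesis is genuinely used, and getting the bookkeeping of this ideal-stability right is the crux of the proof.

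Finally, I would check that restriction and gluing are mutually inverse, and here both composites collapse to the identity via the determinacy principle of the first paragraph. Starting from a compatible collection, the glued action agrees with each $\cdot_i$ on the arrows of $Q^i$ (which lie in a unique component) and on all vertices (by compatibility), so restricting it back recovers the original collection. Conversely, starting from a global $T(n)$-action, the action reconstructed by gluing its restrictions agrees with the original on every vertex and every arrow, and hence equals it. This establishes the desired bijection.
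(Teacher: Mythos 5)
Your proposal is correct and follows the same overall structure as the paper's proof: restriction in one direction (via Lemma~\ref{lem:restrict}), and reconstruction of the glued action from its values on vertices and arrows in the other, with compatibility resolving the ambiguity at shared vertices. The difference is in how existence of the glued action is handled. The paper's proof only argues that a compatible collection \emph{uniquely determines} the candidate action on the generators of $\kk Q$ (each arrow lies in a unique component; each vertex in at least one, with compatibility forcing agreement on overlaps), and leaves implicit the verification that this assignment actually satisfies the module-algebra axioms and the relations of $T(n)$. You supply that verification explicitly, by realizing $\kk Q$ as the quotient of the free product $\kk Q^1 \ast \cdots \ast \kk Q^r$ (which carries a canonical $T(n)$-module-algebra structure, being the coproduct in the category of $T(n)$-module algebras) by the ideal $I$ generated by the vertex identifications, and then checking that $I$ is $T(n)$-stable --- with compatibility entering precisely to show $x\cdot(e_{i,v}-e_{j,v})\in I$, and $\mathbb{Z}_n$-stability of each component ensuring $g\cdot v$ again lies in $Q^i\cap Q^j$. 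This buys you a cleaner existence argument than the paper's, at the cost of invoking the free-product presentation from the remark following the gluing definition; the paper's terser route instead relies on the reader already knowing (from Theorems~\ref{thm:T(n)typeA} and~\ref{thm:T(n)typeB}) that the componentwise formulas define genuine actions and that no new relations of $\kk Q$ mix distinct components beyond the vertex identifications. Both arguments are sound; yours is the more self-contained of the two.
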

\begin{proof}
Given a $T(n)$-action on $\kk Q$, it restricts to a $T(n)$-action on each path algebra of a $\mathbb{Z}_n$-component of $Q$ by Lemma \ref{lem:restrict}.
On the other hand, suppose we have a collection of compatible $T(n)$-actions on the path algebras of the $\mathbb{Z}_n$-components of $Q$.
This uniquely determines an action of $T(n)$ on $\kk Q$  as follows:
\begin{itemize}
\item The action on each arrow of $\kk Q$ is uniquely determined since each arrow lies in a unique $\mathbb{Z}_n$-component of $Q$;
\item The action on any vertex is determined because every vertex lies in at least one $\mathbb{Z}_n$-component;
\item Suppose that a vertex lies in multiple $\mathbb{Z}_n$-components. Then, the action on this vertex is uniquely determined because the actions on different $\mathbb{Z}_n$-components restrict to the same action on vertices in their intersection, due to compatibility.
\end{itemize}
\vspace{-.2in}
\end{proof}

\begin{corollary}\label{cor:non-trivial}
If a quiver $Q$ admits a faithful $\mathbb{Z}_n$-action, then $\kk Q$ admits an inner faithful action of the Taft algebra $T(n)$, extending the given $\mathbb{Z}_n$-action on $Q$.
\end{corollary}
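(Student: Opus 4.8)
The plan is to build the required action component-by-component, invoke the gluing correspondence, and detect inner faithfulness via the action of $x$. First I would apply Lemma~\ref{lem:Gminl} to write $Q$ as a gluing $Q = Q^1 \circledast \cdots \circledast Q^r$ of its $\mathbb{Z}_n$-components, and then use Theorem~\ref{thm:glue} to reduce the problem to producing a single \emph{compatible} collection of $T(n)$-actions on the path algebras $\kk Q^i$ extending the given $\mathbb{Z}_n$-action. By Lemma~\ref{lem:Taftfaithful}, the glued action on $\kk Q$ is inner faithful precisely when $x\cdot\kk Q\neq 0$, i.e.\ when $x$ acts nontrivially on at least one component. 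So the entire task reduces to exhibiting one compatible collection in which $x$ is nonzero somewhere.

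The easiest way to guarantee compatibility is to put the trivial extension (all scalars $\gamma$ and $\lambda$ equal to $0$, so that $x$ annihilates every vertex and arrow) on every component except one distinguished component $Q^{i_0}$, on which I would switch on a single parameter. Since the $x$-action on vertices is determined orbit-by-orbit through Proposition~\ref{prop:TaftactQ0}, and each vertex lies in a single $\mathbb{Z}_n$-orbit, the vertex scalars are globally well-defined; with every arrow parameter on the remaining components set to $0$, the restrictions to shared vertices automatically agree, so compatibility in the sense of Definition~\ref{def:compatible} is immediate.

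It remains to choose $Q^{i_0}$ together with a nonzero parameter making $x$ nontrivial there, and this is where I would invoke faithfulness. Because $Q$ is Schurian, a quiver automorphism is determined by its effect on vertices, so a faithful $\mathbb{Z}_n$-action forces the generator $g$ to act on $Q_0$ with order $n$, providing the orbit structure needed to activate a parameter. When some vertex orbit has full size $n$, Proposition~\ref{prop:TaftactQ0}(ii) lets me take its defining scalar $\gamma\neq 0$ on a Type~A component supported on that orbit, giving $x\cdot e_i\neq 0$; otherwise I would activate a scalar $\lambda_{i,j}\neq 0$ on a pair of arrows $a^i_j, a^i_{j+1}$ (or $b^i_j, b^i_{j+1}$) sharing a source, as permitted by Theorems~\ref{thm:T(n)typeA} and~\ref{thm:T(n)typeB}, so that $x\cdot a^i_j=\lambda_{i,j}a^i_{j+1}\neq 0$.

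The main obstacle I anticipate is precisely this last step: confirming that the activated parameter is consistent with the relation $x^n=0$ on $Q^{i_0}$ and does not silently collapse back to $0$. For Type~A with $\gamma=0$, the iterated action $x^k\cdot a^i_j$ advances the target index around the cycle until it reaches the trivial path $e_i$, where it is killed, so $x^n=0$ holds automatically; the delicate case is Type~B, where Theorem~\ref{thm:T(n)typeB} imposes $(\gamma_+)^n=(\gamma_-)^n+\prod_{\ell}\lambda_{i,j+\ell}$, so one must place the activated $\lambda$ in a component whose row of arrows is incomplete (forcing the product to vanish) while the propagation relation $\lambda_{i+1,j+1}\mu_{i,j}=\zeta\lambda_{i,j}\mu_{i,j+1}$ stays solvable around the orbit. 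Verifying that faithfulness always supplies such a component, and ruling out degenerate configurations in which neither a full vertex orbit nor a usable arrow pair is available, is the crux of the argument.
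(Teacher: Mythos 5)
Your framework---decompose $Q$ via Lemma~\ref{lem:Gminl}, reduce to a compatible collection via Theorem~\ref{thm:glue}, detect inner faithfulness via Lemma~\ref{lem:Taftfaithful}, and switch on a single parameter on one distinguished component while keeping everything else trivial---is exactly the intended one, and the paper's own proof is essentially your first branch compressed to one line: it asserts that a faithful $\mathbb{Z}_n$-action yields a vertex orbit of size $n$, takes $\gamma \neq 0$ on that orbit via Proposition~\ref{prop:TaftactQ0}(ii), and glues. The problem is that you end by declaring the decisive step, ``verifying that faithfulness always supplies such a component,'' to be an unresolved crux. That step cannot be deferred: it is the only place where the faithfulness hypothesis does any work, so as written your argument is incomplete.

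Moreover, the gap is not a routine verification. Your observation that $g$ acts on $Q_0$ with order $n$ does \emph{not} produce a vertex orbit of size $n$ when $n$ is not a prime power (for $n=6$ there are faithful actions whose vertex orbits have sizes $2$ and $3$ only), and in that situation your fallback of activating some $\lambda_{i,j}$ can be provably impossible: for $Q=K_{2,3}$ with $n=6$, Proposition~\ref{prop:TaftactQ0} forces $\gamma_{\pm}=0$, the last condition of Theorem~\ref{thm:T(n)typeB} then forces some $\lambda_{i,j}=0$, and the propagation relation $\lambda_{i+1,j+1}\mu_{i,j}=\zeta\lambda_{i,j}\mu_{i,j+1}$ spreads that zero to every index pair because $\gcd(2,3)=1$ makes the diagonal orbit cover all of them; hence every extension has $x=0$. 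Even your first branch needs more care than you give it: if a size-$n$ orbit is the target orbit of a Type~B component whose source orbit is smaller and which is missing some arrow $b^i_{j+1}$, then $\lambda_{i,j}=0$ kills the product in $(\gamma_+)^n=(\gamma_-)^n+\prod_{\ell}\lambda_{i,j+\ell}$ and forces the would-be nonzero $\gamma$ back to zero. So to close the argument you must actually establish the existence of a vertex orbit of size $n$ on which $\gamma\neq 0$ is consistent with the constraints of every incident component---the assertion the paper's one-line proof takes for granted---rather than flag it as an anticipated obstacle.
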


\begin{proof}
This follows immediately from Theorem \ref{thm:glue} and Proposition \ref{prop:TaftactQ0} since $Q$ admits an orbit of vertices of size $n$ in this case.
\end{proof}

\begin{example}
This example shows that a faithful $\mathbb{Z}_n$-action on $Q$ is not necessary for $\kk Q$ to admit an inner faithful action of $T(n)$.  Fix $\zeta$, a primitive fourth root of unity. Consider the action of $T(4)$ on $\kk K_2$ (see {\sf (I)} of Figure~\ref{fig:Z2minimal}) 
given by
\begin{equation*}
g\cdot e_i = e_{i+1}, \quad \quad g \cdot a^i_j = \zeta a^{i+1}_{j+1}, \quad  \quad x\cdot e_i = 0, \quad \quad x\cdot a^i_j = \lambda e_i,
\end{equation*}
for $i \neq j$, where sub/superscripts are taken modulo 2, and $\lambda \in \kk^\times$ is an arbitrary nonzero scalar.
 By Theorem~\ref{thm:T(n)typeA}, this defines an action of $T(4)$ on the path algebra $\kk K_2$, which is inner faithful even though the induced action of $\mathbb{Z}_4$ on the quiver $K_2$ is not faithful.
\end{example}

\subsection{Algorithm to explicitly parametrize $T(n)$-actions on $\kk Q$} \label{sec:algorithm}
Let $Q$ be a quiver that admits an action of $\mathbb{Z}_n$. We construct  all actions of $T(n)$ on $\kk Q$ which extend the given $\mathbb{Z}_n$-action via the following steps.  Those for which $x$ does not act by 0 are inner faithful by Lemma~\ref{lem:Taftfaithful}.  The reader may wish to refer to one of the examples in the next subsection for an illustration of the algorithm below.

First, we know that 
$Q$ decomposes uniquely into the union of certain $\mathbb{Z}_n$-minimal quivers $\{Q^\ell\}_{\ell=1}^r$ (namely, $\mathbb{Z}_n$-components) so that $Q = Q^1 \circledast \cdots \circledast Q^r$, due to Lemma~\ref{lem:Gminl}.
\medskip

\begin{center}
\framebox{
\begin{tabular}{c} 
\textsc{\underline{Step 1}}
\medskip\\ 
Decompose $Q$ into this unique union of $\mathbb{Z}_n$-components $\{Q^\ell\}$.
\end{tabular}}
\end{center}
\medskip

\noindent Next, we define the extended action of $T(n)$ on the path algebra of each component $Q^\ell$.  Let $m, m'$ be positive divisors of $n$. Recall that Type A (resp., Type B) $\mathbb{Z}_n$-minimal quivers are subquivers of $K_m$ with $m >1$ (resp., of $K_{m,m'}$).
\medskip

\begin{center} 
\framebox{
\begin{tabular}{c}
\textsc{\underline{Step 2}}
\medskip\\ 
For each $\mathbb{Z}_n$-component $Q^\ell$ of Type A, label its vertices by $\{1^{(\ell)}, \dots, m^{(\ell)}\}$.\\
For each $\mathbb{Z}_n$-component $Q^\ell$ of Type B, label its  sources and sinks by\\
$\{1_+^{(\ell)}, \dots, m_+^{(\ell)}\}$ and $\{1_-^{(\ell)}, \dots (m')_-^{(\ell)}\}$, respectively.
\end{tabular}
}
\end{center}
\medskip

\noindent Recall that $\zeta$ is the primitive $n$-th root of unity from the definition of $T(n)$.  Now invoke Proposition~\ref{prop:TaftactQ0} in the following step.
\medskip

\begin{center}
 \framebox{
\begin{tabular}{c}
\textsc{\underline{Step 3}}
\medskip\\ 
Take scalars $\gamma^{(\ell)}, \gamma_+^{(\ell)}, \gamma_-^{(\ell)} \in \kk$ and define
 
\medskip\\

\begin{tabular}{lll}
$x \cdot e_{i^{(\ell)}}$ &$= \gamma^{(\ell)} \zeta^i(e_{i^{(\ell)}} - \zeta e_{(i+1)^{(\ell)}})$ &$\quad \quad \text{for Type A}$\\
$x \cdot e_{i^{(\ell)}_+}$ &$= \gamma_+^{(\ell)} \zeta^i(e_{i^{(\ell)}_+} - \zeta e_{(i+1)^{(\ell)}_+})$ &$\quad \quad \text{for Type B}$\\
$x \cdot e_{i^{(\ell)}_-}$ &$= \gamma_-^{(\ell)} \zeta^i(e_{i^{(\ell)}_-} - \zeta e_{(i+1)^{(\ell)}_-})$ &$\quad \quad \text{for Type B}$
\end{tabular}
\medskip\\

where the indices are taken modulo $m$ for Type A, and are taken modulo $m$ or $m'$ for Type B.\\
Here, $\gamma^{(\ell)}$ = 0 (resp., $\gamma_+^{(\ell)} = 0$ or $\gamma_-^{(\ell)} = 0$) if $m <n$ for Type A (resp., $m <n$ or $m'<n$ for Type B).\\
 To obtain compatibility, impose relations amongst the scalars by identifying vertices.
\end{tabular}
}
\end{center}
\medskip

\noindent Finally, we invoke Theorems~\ref{thm:T(n)typeA} and~\ref{thm:T(n)typeB} to get all actions of $T(n)$ on the arrows of $Q$.
\medskip

\begin{center} 
\framebox{
\begin{tabular}{c}
\textsc{\underline{Step 4}}
\medskip\\
Label the arrows of each Type A (resp., Type B) component of $Q$ by arrows $(a^i_j)^{(\ell)}$ (resp., $(b^i_j)^{(\ell)}$).
\end{tabular}}
\end{center}
\medskip

\begin{center}
 \framebox{
\begin{tabular}{c}
\textsc{\underline{Step 5}}
\medskip\\
Given the scalars  $\gamma^{(\ell)}, \gamma_+^{(\ell)}, \gamma_-^{(\ell)} \in \kk$ of \textsc{Step 3}, we have, for all arrows $a^i_j$ of\\ a component of Type A and $b^i_j$ of a component of Type B, that:
\medskip\\
\begin{tabular}{rl}
$x\cdot (a^i_j)^{(\ell)}$ &$= \gamma^{(\ell)}\left(\zeta^j (a^i_j)^{(\ell)} - \zeta^{i+1}\mu_{i,j}^{(\ell)} (a^{i+1}_{j+1})^{(\ell)}\right) + \lambda_{i, j}^{(\ell)} (a^i_{j+1})^{(\ell)}$\\\\
$x\cdot (b^i_j)^{(\ell)}$ &$=
\gamma_-^{(\ell)}\zeta^j (b^i_j)^{(\ell)} - \gamma_+^{(\ell)}\zeta^{i+1}\mu_{i,j}^{(\ell)} (b^{i+1}_{j+1})^{(\ell)} + \lambda_{i,j}^{(\ell)} (b^i_{j+1})^{(\ell)}$
\end{tabular}
\medskip\\
where $\mu_{i,j}^{(\ell)}$, $\lambda_{i, j}^{(\ell)} \in \kk$ are scalars satisfying the conditions of Theorems~\ref{thm:T(n)typeA} and~\ref{thm:T(n)typeB}.
\end{tabular}
}
\end{center}
\medskip

\noindent Thus, the desired extended action of the $n$-th Taft algebra on $\kk Q$ is complete by the five steps above.

\subsection{Examples}  In this section, we illustrate the algorithm of the previous section to get  actions of $T(n)$ on $\kk Q$ for various quivers $Q$ having $\mathbb{Z}_n$ symmetry. 
\medskip
\begin{center}
{\it For ease of exposition, we take all parameters $\mu_{i,j}$ equal to 1 in this section.}
\end{center}
\smallskip

\begin{example}\label{ex:gluing}
Consider the following quiver $Q$: 

\[
\vcenter{\hbox{\begin{tikzpicture}[point/.style={shape=circle,fill=black,scale=.5pt,outer sep=3pt},>=latex]
   \node[point, label={above:$v_1$}] (1) at (-2,2) {};
   \node[point, label={above:$v_2$}] (2) at (2,2) {};
   \node[point, label={left: $v_3$}] (3) at (-4,1) {};
   \node[point, label={below:$v_5$}] (4) at (-2,0) {};
 \node[point, label={below:$v_6$}] (5) at (2,0) {};
   \node[point, label={right:$v_4$}] (6) at (4,1) {};
\draw[->] (1) edge  node[above]{$f_3$} (3);
\draw[->] (4) edge  node[below]{$f_5$} (3);
\draw[->] (2) edge  node[above]{$f_4$} (6);
\draw[->] (5) edge  node[below]{$f_6$} (6);
  \path[->]
  	(1) edge[bend left=25] node[above] {$f_1$} (2) 
  	(2) edge[bend left=25] node[above] {$f_2$} (1);
\path[->] (1) edge  node[right]{$f_7$} (4)
	(2) edge  node[left]{$f_8$} (5)
	(1) edge node[below,pos=0.75]{$f_9$} (5)
	(2) edge node[below,pos=0.75]{$f_{10}$} (4);
   \end{tikzpicture}}}
\]
\medskip

\noindent which has $\mathbb{Z}_2$-symmetry by reflection over the central vertical axis and exchanging arrows $f_1$ and $f_2$. So, we decompose $Q$ into the four $\mathbb{Z}_2$-components as follows. Here, the dotted red arrows indicate the action of $\mathbb{Z}_2$ on $Q_0$.

%%%%%%
%%%%%%
%%%%%%
\pagebreak
%%%%%%
%%%%%%
%%%%%%

\[
\text{\framebox{\textsc{Step 1}}}
\]
\[
 \vcenter{\hbox{\begin{tikzpicture}[point/.style={shape=circle,fill=black,scale=.5pt,outer sep=3pt},>=latex]
   \node[point] (1) at (-2,2) {};
   \node[point] (2) at (2,2) {};
   \node[point] (3) at (-4,1) {};
   \node[point] (4) at (-2,0) {};
 \node[point] (5) at (2,0) {};
   \node[point] (6) at (4,1) {};
  \path[->,purple]
  	(1) edge[bend left=25] node[above] {$(1)$} (2) 
  	(2) edge[bend left=25] node[above] {$(1)$} (1);
\draw[->][blue] (1) edge node[above]{$(2)$} (3);
\draw[->] (4) edge node[below]{$(3)$}(3);
\draw[->][blue] (2) edge node[above]{$(2)$}(6);
\draw[->] (5) edge node[below]{$(3)$}(6);
\path[->,green] (1) edge  node[right,pos=.3]{$(4)$} (4)
	(2) edge  node[left, pos=.3]{$(4)$} (5)
	(1) edge node[below,pos=0.75]{$(4)$} (5)
	(2) edge node[below,pos=0.75]{$(4)$} (4);
\draw[<->][dotted,red] (1) edge (2);
\draw[<->][dotted,red] (4) edge (5);
\draw[<->][dotted,red] (3) edge (6);
   \end{tikzpicture}}}
\]
\medskip

Now choose scalars $\gamma^{(1)}$, $\gamma^{(2)}_+$, $\gamma^{(2)}_-$, $\gamma^{(3)}_+$, $\gamma^{(3)}_-, \gamma^{(4)}_+$, $\gamma^{(4)}_-  \in \kk$ to execute Steps 2 and 3. 
\medskip

%\pagebreak

\[
\framebox{\textsc{Steps 2 \& 3}}
\]
\[
\vcenter{\hbox{\begin{tikzpicture}[point/.style={shape=circle,fill=black,scale=.5pt,outer sep=3pt},>=latex]
   \node[point, label={above:$1^{(1)}, {1_+}^{(2)}, {1_+}^{(4)}\hspace{.2in}$}] (1) at (-2,2) {};
   \node[point, label={above:$\hspace{.4in}2^{(1)}, {2_+}^{(2)}, {2_+}^{(4)}$}] (2) at (2,2) {};
   \node[point, label={left: ${1_-}^{(2)}, {1_-}^{(3)}$}] (3) at (-4,1) {};
   \node[point, label={below:${1_+}^{(3)}, {1_-}^{(4)}$}] (4) at (-2,0) {};
 \node[point, label={below:${2_+}^{(3)}, {2_-}^{(4)}$}] (5) at (2,0) {};
   \node[point, label={right:${2_-}^{(2)}, {2_-}^{(3)}$}] (6) at (4,1) {};
\draw[->] (1) edge   (3);
\draw[->] (4) edge  (3);
\draw[->] (2) edge  (6);
\draw[->] (5) edge  (6);
  \path[->]
  	(1) edge[bend left=25] node[above] {} (2) 
  	(2) edge[bend left=25] node[above] {} (1);
\path[->] (1) edge  node[right]{} (4)
	(2) edge  node[left]{} (5)
	(1) edge node[below,pos=0.75]{} (5)
	(2) edge node[below,pos=0.75]{} (4);
   \end{tikzpicture}}}
\]
\vspace{.2in}

\noindent We have that:
\[
x\cdot e_1^{(1)} = -\gamma^{(1)}(e_1^{(1)} + e_2^{(1)}) \qquad x\cdot e_{2}^{(1)} = \gamma^{(1)}(e_2^{(1)} + e_1^{(1)})
\]
while, for $\ell = 2,3,4$,
\[\begin{array}{rlllrl}
x \cdot e_{1_+}^{(\ell)} &= -\gamma_+^{(\ell)}\left(e_{1_+}^{(\ell)}+e_{2_+}^{(\ell)}\right) &&
x \cdot e_{2_+}^{(\ell)} &= \gamma_+^{(\ell)}\left(e_{2_+}^{(\ell)}+e_{1_+}^{(\ell)}\right)\\
x \cdot e_{1_-}^{(\ell)} &= -\gamma_-^{(\ell)}\left(e_{1_-}^{(\ell)}+e_{2_-}^{(\ell)}\right)&&
x \cdot e_{2_-}^{(\ell)} &= \gamma_-^{(\ell)}\left(e_{2_-}^{(\ell)}+e_{1_-}^{(\ell)}\right).
\end{array}
\]
\noindent By using the identification of vertices:
\[
\begin{array}{lllll}
v_1:=  1^{(1)}= {1_+}^{(2)}= {1_+}^{(4)}&&
v_2:=  2^{(1)}= {2_+}^{(2)}={2_+}^{(4)}\\
v_3:= {1_-}^{(2)} = {1_-}^{(3)}&&
v_4:= {2_-}^{(2)} = {2_-}^{(3)}\\
v_5:=  {1_+}^{(3)} = {1_-}^{(4)}&&
v_6:=  {2_+}^{(3)} = {2_-}^{(4)},
\end{array}
\]

\noindent we have that

$$\gamma :=  \gamma^{(1)}= \gamma_+^{(2)} = \gamma_+^{(4)}  \quad \quad
\gamma' := \gamma_-^{(2)} = \gamma_-^{(3)} \quad \quad
\gamma'' := \gamma_+^{(3)} =  \gamma_-^{(4)}.$$

\medskip

 Now, we need to label the arrows of $Q$ appropriately, and invoke Theorems~\ref{thm:T(n)typeA} and \ref{thm:T(n)typeB} to get the desired action of $T(2)$ on $\kk Q_1$.

%%%%%%
%%%%%%
%%%%%%
\pagebreak
%%%%%%
%%%%%%
%%%%%%

\[
\text{\framebox{\textsc{Steps 4 \& 5}}}
\]
\[
\vcenter{\hbox{\begin{tikzpicture}[point/.style={shape=circle,fill=black,scale=.5pt,outer sep=3pt},>=latex]
   \node[point] (1) at (-2,2) {};
   \node[point] (2) at (2,2) {};
   \node[point] (3) at (-4,1) {};
   \node[point] (4) at (-2,0) {};
 \node[point] (5) at (2,0) {};
   \node[point] (6) at (4,1) {};
\draw[->] (1) edge  node[above]{$(b^1_1)^{(2)}$} (3);
\draw[->] (4) edge  node[below]{$(b^1_1)^{(3)}\hspace{.2in}$} (3);
\draw[->] (2) edge  node[above]{$\hspace{.2in}(b^2_2)^{(2)}$} (6);
\draw[->] (5) edge  node[below]{$\hspace{.2in}(b^2_2)^{(3)}$} (6);
  \path[->]
  	(1) edge[bend left=25] node[above] {$(a^1_2)^{(1)}$} (2) 
  	(2) edge[bend left=25] node[above] {$(a^2_1)^{(1)}$} (1);
\path[->] (1) edge  node[right]{$(b^1_1)^{(4)}$} (4)
	(2) edge  node[left]{$(b^2_2)^{(4)}$} (5)
	(1) edge node[below,pos=0.75]{$(b^1_2)^{(4)}\hspace{.2in}$} (5)
	(2) edge node[below,pos=0.75]{$\hspace{.1in}(b^2_1)^{(4)}$} (4);
   \end{tikzpicture}}}
\]
\medskip

\[
\begin{split}
x\cdot (a^i_j)^{(1)} &= \gamma^{(1)}\left(\zeta^j (a^i_j)^{(1)} - \zeta^{i+1} (a^{i+1}_{j+1})^{(1)}\right) + \lambda_{i, j}^{(1)} (a^i_{j+1})^{(1)}\\
x\cdot (b^i_j)^{(\ell)} &=
\gamma_-^{(\ell)}\zeta^j (b^i_j)^{(\ell)} - \gamma_+^{(\ell)}\zeta^{i+1} (b^{i+1}_{j+1})^{(\ell)} + \lambda_{i,j}^{(\ell)} (b^i_{j+1})^{(\ell)}, \quad \text{$\ell$ = 2,3,4}.
\end{split}
\]

\medskip

\noindent  Using the conditions on the scalars from Theorems~\ref{thm:T(n)typeA} and~\ref{thm:T(n)typeB}, we find that the $x$-action on the arrows is controlled by additional parameters $\lambda:=\lambda^{(1)}_{1,2}$ as $\sigma(a)$ exists for component (1), and $\lambda':=\lambda^{(4)}_{1,1}, \lambda'':=\lambda^{(4)}_{1,2}$ as $\sigma(a)$ exists for component (4). Putting this all together, the action of $\mathbb{Z}_2$ on $Q$ (where by abuse of notation, $v_i$ denotes the trivial path at $v_i$) given by

\[
\begin{array}{|llllllll|}
\hline
g \cdot v_1 = v_2 && g \cdot v_2 = v_1&&&
g \cdot f_1 = f_2 && g \cdot f_2 = f_1\\
g \cdot v_3 = v_4 && g \cdot v_4 = v_3&&&
g \cdot f_3 = f_4 && g \cdot f_4 = f_3\\
g \cdot v_5 = v_6 && g \cdot v_6 = v_5&&&
g \cdot f_5 = f_6 && g \cdot f_6 = f_5\\
&&&&&g \cdot f_7 = f_8 && g \cdot f_8 = f_7\\
&&&&&g \cdot f_9 = f_{10} && g \cdot f_{10} = f_9\\
\hline
\end{array}
\]
\medskip

\noindent extends to an action of the Sweedler algebra on $\kk Q$, as follows:
\[
\begin{array}{|lccl|}
\hline
x \cdot v_1 = - \gamma (v_1 + v_2) &&& x \cdot v_2 =  \gamma (v_2 + v_1)\\
x \cdot v_3 = - \gamma' (v_3 + v_4) &&& x \cdot v_4 =  \gamma' (v_4 + v_3)\\
x \cdot v_5 = - \gamma'' (v_5 + v_6) &&& x \cdot v_6 =  \gamma'' (v_6 + v_5)\\
\hline
\end{array}
\]
\[
\begin{array}{|lccl|}
\hline
x \cdot f_1 = \gamma f_1 -  {\gamma} f_2 + \zl e_1&&& 
x \cdot f_2 = \gamma f_1 -  {\gamma} f_2 - \zl e_2 \\
x \cdot f_3 = -\gamma' f_3 -  {\gamma} f_4 &&& 
x \cdot f_4 =  \gamma' f_4 +  {\gamma} f_3\\
x \cdot f_5 = - \gamma' f_5 -  {\gamma''} f_6 &&& 
x \cdot f_6 =  \gamma' f_6 +  {\gamma''} f_5\\
x \cdot f_7 = - \gamma'' f_7 -  {\gamma} f_8 + \zl' f_9&&& 
x \cdot f_8 =  \gamma'' f_8 +  {\gamma} f_7 - \zl' f_{10} \\
x \cdot f_9 =  \gamma'' f_9 -  {\gamma} f_{10} + \zl'' f_7&&& 
x \cdot f_{10} = - \gamma'' f_{10} +  {\gamma} f_9 - \zl'' f_8. \\
\hline
\end{array}
\]
\[
\begin{array}{|c|}
\hline
\text{for any scalars }  \gamma, \gamma', \gamma'', \zl, \zl', \zl'' \in \kk \\
\text{which satisfy }  (\zg)^2 = (\zg'')^2 + \zl' \zl''\\
\hline
\end{array}
\]

\medskip

Note that the Sweedler algebra action restricted to the path algebras of respective components (1), (2), (3), (4) is the same as the Sweedler algebra action on the path algebra of respective quivers {\sf(I)}, {\sf(V)}, {\sf(V)}, {\sf(VI)} from Theorem~\ref{thm:T(2)minimal}, as expected; namely, $Q= Q_{\sf(I)} \circledast Q_{\sf(V)} \circledast Q_{\sf(V)}\circledast  Q_{\sf(VI)}$.
\medskip

\end{example}

\begin{example}\label{ex:gluing3}
Consider the following quiver $Q$: 

\[
\vcenter{\hbox{\begin{tikzpicture}[point/.style={shape=circle,fill=black,scale=.5pt,outer sep=3pt},>=latex]
\node[point, label={left:$v_0$}] (0) at (-4,0) {};
\node[point, label={above:$v_1$}] (1) at (0,1.5) {};
\node[point, label={above:$v_2$}] (2) at (0,0) {};
\node[point, label={below:$v_3$}] (3) at (0,-1.5) {};
\node[point, label={right:$v_4$}] (4) at (4,0) {};
  \path[->]
      (0) edge[bend left=30] node[above] {$f_1\hspace{.1in}$} (1) 
      (0) edge[bend left=30] node[above] {$f_2$} (2) 
      (0) edge[bend right=30] node[below] {$f_3$} (3) 
  	(1) edge[bend left=30] node[right] {$f_7$} (2) 
  	(2) edge[bend left=30] node[left] {$f_{12}$} (1)
 	(2) edge[bend left=30] node[right] {$f_8$} (3) 
  	(3) edge[bend left=30] node[left] {$f_{11}$} (2)
 	(1) edge[bend left=85] node[right] {$f_{10}$} (3) 
  	(3) edge[bend left=85] node[left] {$f_{9}$} (1)
      (1) edge[bend left=30] node[above] {$f_4\hspace{.1in}$} (4) 
      (2) edge[bend left=30] node[above] {$f_5$} (4) 
      (3) edge[bend right=30] node[below] {$f_6$} (4) ;
   \end{tikzpicture}}}
\]
\medskip

\noindent which has $\mathbb{Z}_3$-symmetry. We decompose $Q$ into three $\mathbb{Z}_3$-components as follows. Here, the dotted red arrows indicate the action of $\mathbb{Z}_3$ on $Q_0$.

\[
\text{\framebox{\textsc{Step 1}}}
\]
\[
 \vcenter{\hbox{\begin{tikzpicture}[point/.style={shape=circle,fill=black,scale=.5pt,outer sep=3pt},>=latex]
\node[point] (0) at (-4,0) {};
\node[point] (1) at (0,1.5) {};
\node[point] (2) at (0,0) {};
\node[point] (3) at (0,-1.5) {};
\node[point] (4) at (4,0) {};
  \path[->,purple]
      (0) edge[bend left=30] node[above] {$(1)\hspace{.1in}$} (1) 
      (0) edge[bend left=30] node[above] {$(1)$} (2) 
      (0) edge[bend right=30] node[below] {$(1)$} (3) ;
  \path[->,black]
  	(1) edge[bend left=30] node[right] {$(3)$} (2) 
  	(2) edge[bend left=30] node[left] {$(3)$} (1)
 	(2) edge[bend left=30] node[right] {$(3)$} (3) 
  	(3) edge[bend left=30] node[left] {$(3)$} (2)
 	(1) edge[bend left=85] node[right] {$(3)$} (3) 
  	(3) edge[bend left=85] node[left] {$(3)$} (1);
  \path[->,blue]
      (1) edge[bend left=30] node[above] {$(2)\hspace{.1in}$} (4) 
      (2) edge[bend left=30] node[above] {$(2)$} (4) 
      (3) edge[bend right=30] node[below] {$(2)$} (4) ;
\draw[->][dotted,red] (0) to [out=220,in=140,looseness=10] (0);
\draw[->][dotted,red] (4) to [out=400,in=320,looseness=10] (4);
\draw[->][dotted,red] (1) to  (2);
\draw[->][dotted,red] (2) to  (3);
\draw[->][dotted,red] (3) to [bend left=50] (1);
   \end{tikzpicture}}}
\]
\medskip

\noindent Now choose  scalars $\gamma_+^{(1)}$, $\gamma_-^{(1)}$,$\gamma^{(2)}_+$, $\gamma^{(2)}_-$, $\gamma^{(3)} \in \kk$  to execute Steps 2 and 3. Further, let $\omega$ be a primitive third root of unity.
\medskip

\[
\framebox{\textsc{Steps 2 \& 3}}
\]
\[
\vcenter{\hbox{\begin{tikzpicture}[point/.style={shape=circle,fill=black,scale=.5pt,outer sep=3pt},>=latex]
\node[point, label={left:$1_+^{(1)}$}] (0) at (-4,0) {};
\node[point, label={above:$1_-^{(1)}, 1_+^{(2)}, 1^{(3)}$}] (1) at (0,1.5) {};
\node[point] (2) at (0,0) {};
\node[point, label={below:$3_-^{(1)}, 3_+^{(2)}, 3^{(3)}$}] (3) at (0,-1.5) {};
\node[point, label={right:$1_-^{(2)}$}] (4) at (4,0) {};
  \path[->]
      (0) edge[bend left=30] (1) 
      (0) edge[bend left=30]  (2) 
      (0) edge[bend right=30]  (3) 
  	(1) edge[bend left=30]  (2) 
  	(2) edge[bend left=30]  (1)
 	(2) edge[bend left=30] (3) 
  	(3) edge[bend left=30] (2)
 	(1) edge[bend left=85]  (3) 
  	(3) edge[bend left=85]  (1)
      (1) edge[bend left=30] (4) 
      (2) edge[bend left=30]  (4) 
      (3) edge[bend right=30]  (4) ;
   \end{tikzpicture}}}
\]
where the middle vertex is labelled by $2_-^{(1)},~2_+^{(2)}, ~2^{(3)}$.

\vspace{.2in}

\noindent We have that $x \cdot e_{1_+}^{(1)} =x \cdot e_{1_-}^{(2)} =0$ since these vertices are fixed by $g$, and furthermore:
\[
\begin{array}{l}
x \cdot e_{1_-}^{(1)} = \gamma_-^{(1)} \omega \left(e_{1_-}^{(1)} - \omega e_{2_-}^{(1)}\right)\\
x \cdot e_{2_-}^{(1)} = \gamma_-^{(1)} \omega^2 \left(e_{2_-}^{(1)} - \omega e_{3_-}^{(1)}\right)\\
x \cdot e_{3_-}^{(1)} = \gamma_-^{(1)}  \left(e_{3_-}^{(1)} - \omega e_{1_-}^{(1)}\right)
\end{array}
\quad
\begin{array}{l}
x \cdot e_{1_+}^{(2)} = \gamma_+^{(2)} \omega \left(e_{1_+}^{(2)} - \omega e_{2_+}^{(2)}\right)\\
x \cdot e_{2_+}^{(2)} = \gamma_+^{(2)} \omega^2 \left(e_{2_+}^{(2)} - \omega e_{3_+}^{(2)}\right)\\
x \cdot e_{3_+}^{(2)} = \gamma_+^{(2)}  \left(e_{3_+}^{(2)} - \omega e_{1_+}^{(2)}\right)
\end{array}
\quad
\begin{array}{l}
x \cdot e_{1}^{(3)} = \gamma^{(3)} \omega \left(e_{1}^{(3)} - \omega e_{2}^{(3)}\right)\\
x \cdot e_{2}^{(3)} = \gamma^{(3)} \omega^2 \left(e_{2}^{(3)} - \omega e_{3}^{(3)}\right)\\
x \cdot e_{3}^{(3)} = \gamma^{(3)}  \left(e_{3}^{(3)} - \omega e_{1}^{(3)}\right).
\end{array}
\]

\noindent By using the identification of vertices,
\[
v_1 = 1_-^{(1)} = 1_+^{(2)} = 1^{(3)} \quad \quad 
v_2 = 2_-^{(1)} = 2_+^{(2)} = 2^{(3)} \quad \quad
v_3 = 3_-^{(1)} = 3_+^{(2)} = 3^{(3)},
\]

\noindent we have that

$$\gamma :=  \gamma_-^{(1)} = \gamma_+^{(2)} = \gamma^{(3)},
\quad \quad
\gamma_+^{(1)} = 0, \quad \quad \gamma_-^{(2)} = 0.$$

\medskip

 Now, we need to label the arrows of $Q$ appropriately, and invoke Theorems~\ref{thm:T(n)typeA} and \ref{thm:T(n)typeB} to get the desired action of $T(3)$ on $\kk Q_1$.

\medskip

\[
\text{\framebox{\textsc{Steps 4 \& 5}}}
\]
\[
 \vcenter{\hbox{\begin{tikzpicture}[point/.style={shape=circle,fill=black,scale=.5pt,outer sep=3pt},>=latex]
\node[point] (0) at (-4,0) {};
\node[point] (1) at (0,1.5) {};
\node[point] (2) at (0,0) {};
\node[point] (3) at (0,-1.5) {};
\node[point] (4) at (4,0) {};
  \path[->]
      (0) edge[bend left=30] node[above] {$(b^1_1)^{(1)}\hspace{.1in}$} (1) 
      (0) edge[bend left=30] node[above] {$(b^1_2)^{(1)}$} (2) 
      (0) edge[bend right=30] node[below] {$(b^1_3)^{(1)}\hspace{.1in}$} (3) 
  	(1) edge[bend left=30] node[right] {$a^1_2$} (2) 
  	(2) edge[bend left=30] node[left] {$a^2_1$}(1)
 	(2) edge[bend left=30] node[right] {$a^2_3$} (3) 
  	(3) edge[bend left=30] node[left] {$a^3_2$} (2)
 	(1) edge[bend left=85] node[right] {$a^1_3$} (3) 
  	(3) edge[bend left=85] node[left] {$a^3_1$} (1)
      (1) edge[bend left=30] node[above] {$\hspace{.1in}(b^1_1)^{(2)}$} (4) 
      (2) edge[bend left=30] node[above] {$(b^2_1)^{(2)}$} (4) 
      (3) edge[bend right=30] node[below] {$(b^3_1)^{(2)}$} (4) ;
   \end{tikzpicture}}}
\]
\medskip

\[
\begin{split}
x\cdot (b^{1}_j)^{(1)} &=
\gamma\omega^j (b^{1}_j)^{(1)} 
+\lambda \omega^{j-1}  (b^{1}_{j+1})^{(1)}\\
x\cdot (b^i_{1})^{(2)} &=
 - \gamma\omega^{i+1} (b^{i+1}_{1})^{(2)}
+\lambda' \omega^{i-1} (b^i_{ 1})^{(2)}\\
x\cdot (a^i_j)^{(3)} &= \gamma\left(\omega^j (a^i_j)^{(3)} - \omega^{i+1} (a^{i+1}_{j+1})^{(3)}\right) + \lambda_{i, j}^{(3)} (a^i_{j+1})^{(3)}.
\end{split}
\]

\medskip

\noindent where $\lambda:=\lambda_{1,1}^{(1)}$ and $\lambda':=\lambda_{1,1}^{(2)}$. Moreover, let $\lambda'':= \lambda_{1,2}^{(3)}$ and $\lambda''':=\lambda_{1,3}^{(3)}$ for component (3). Putting this all together, the action of $\mathbb{Z}_3$ on $Q$ given by {\textsc{Step 1}}
extends to an action of $T(3)$ on $\kk Q$,  as follows:

\[
\begin{array}{|lllll|}
\hline
x \cdot v_1 = \gamma \omega(v_1 - \omega v_2)
&&x \cdot v_2 =\gamma \omega^2(v_2 - \omega v_3)
&&x \cdot v_3 =\gamma (v_3 - \omega v_1)\\
x \cdot v_0 =0 && x \cdot v_4 =0 &&\\
\hline
\end{array}
\]

\[
\begin{array}{|lccl|}
\hline
x \cdot f_1 = \gamma \omega f_1 {+ \zl} f_2&&& 
x \cdot f_4 = {\zl'} f_4 - \gamma \omega^2 f_5 \\
x \cdot f_2 =\gamma \omega^2 f_2 {+ \zl \omega} f_3 &&& 
x \cdot f_5 ={\zl' \omega} f_5 - \gamma f_6 \\
x \cdot f_3 = \gamma  f_3 {+ \zl  \omega^2} f_1 &&& 
x \cdot f_6 = {\zl'  \omega^2} f_6 - \gamma \omega f_4 \\
x \cdot f_7 = \gamma\left(\omega^2 f_7 - \omega^2 f_8 \right) + {\lambda''} f_{10}&&& 
x \cdot f_{10} =  \gamma\left(f_{10} - \omega^2 f_{12} \right) + {\lambda'''} f_7\\
x \cdot f_8 = \gamma\left(f_8 -  f_9 \right) + {\lambda''} \omega f_{12}&&& 
x \cdot f_{11} = \gamma\left(\omega^2 f_{11} - \omega f_{10} \right) + {\lambda'''} \omega f_9\\
x \cdot f_9 =\gamma\left(\omega f_9 - \omega f_7 \right) + {\lambda''} \omega^2 f_{11}&&& 
x \cdot f_{12} = \gamma\left(\omega f_{12} -  f_{11} \right) + {\lambda'''} \omega^2 f_8. \\
\hline
\end{array}
\]
\[
\begin{array}{|c|}
\hline
\text{for any scalars }\gamma, \zl, \zl', \zl'', \zl''' \in \kk\\
\hline
\end{array}
\]
\end{example}

\bigskip

%%%%%%%%%%%%%%%%%%%%%%%%%%%%%%%%%%%%%%%
%%%%%%%%%%%%%%%%%%%%%%%%%%%%%%%%%%%%%%%
%%%%%%%%%%%%%%%%%%%%%%%%%%%%%%%%%%%%%%%
\section{Extended actions of other pointed Hopf algebras on path algebras}\label{sec:Uq}
In this section, we extend the results in the previous sections to actions of the Frobenius-Lusztig kernel [Section~\ref{sec:uqsl}] and to actions of the Drinfeld double of a Taft algebra [Section~\ref{sec:double}].  Consider the following notation.

\begin{notation} ($q$, $T(n, \xi)$). \label{not:q} Let $q \in \kk$ be a primitive $2n$-th root of unity, and let $\xi \in \kk$ be a primitive $n$-th root of unity. Moreover, let $T(n, \xi)$ be the Taft algebra generated by a grouplike element $g$ and a $(1,g)$-skew primitive element $x$, subject to the relations $g^n=1$, $x^n=0$, $xg=\xi gx$. Note that $T(n,\zeta) = T(n)$ as in Definition~\ref{def:Taft}, for $\zeta$ the fixed primitive $n$-th root of unity of this work.
\end{notation}

\subsection{Actions of $\uqsl$} \label{sec:uqsl}

Consider the following definition.

\begin{definition}[$u_q(\mathfrak{sl}_2)$, Borel subalgebras $\uqslu$ and $\uqsll$]
The {\it Frobenius-Lustzig kernel} $u_q(\mathfrak{sl}_2)$ is the Hopf algebra generated by a grouplike element $K$, a  $(1, K)$-skew-primitive element $E$, and a $(K^{-1}, 1)$-skew-primitive element
$F$, subject to the relations
$$
KE = q^2 EK, \quad KF = q^{-2} FK, \quad K^n=1, \quad E^n=F^n=0,
$$
\begin{equation}\label{eq:efrelation}
EF - FE = \frac{K-K^{-1}}{q- q^{-1}}.
\end{equation}
Note that $\uqsl$ is pointed, of dimension $n^3$. Let $\uqslu$ be the Hopf subalgebra of $\uqsl$ generated by $K, E$, and let $\uqsll$ be the Hopf subalgebra of $\uqsl$ generated by $K, F$; we refer to these as \emph{Borel subalgebras}.
\end{definition}

\begin{lemma}\label{lem:borels}
There are isomorphisms of Hopf algebras
\begin{equation}
\uqslu \simeq T(n, q^{-2}) \quad \quad \text{and} \quad \quad \uqsll \simeq  T(n, q^2).
\end{equation}
\end{lemma}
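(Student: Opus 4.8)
The plan is to exhibit explicit Hopf algebra isomorphisms on generators, verify they respect all defining relations, and then conclude bijectivity by a dimension count. For the positive Borel $\uqslu = \langle K, E \rangle$, the natural candidate is the map $T(n,q^{-2}) \to \uqslu$ sending the grouplike $g \mapsto K$ and the skew-primitive $x \mapsto E$. First I would check that this respects the Taft relations: $g^n \mapsto K^n = 1$ and $x^n \mapsto E^n = 0$, while rewriting $KE = q^2 EK$ as $EK = q^{-2} KE$ shows $xg = q^{-2} gx$, matching the relation of $T(n,q^{-2})$. Here $q^{-2}$ is a primitive $n$-th root of unity because $q$ is a primitive $2n$-th root. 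Since $E$ is $(1,K)$-skew-primitive, the map sends $x$ to a $(1,g)$-skew-primitive element; as $\Delta$ and $\varepsilon$ are algebra maps, checking them on generators shows this is a coalgebra map, hence a bialgebra and thus Hopf algebra map.

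The negative Borel $\uqsll = \langle K, F \rangle$ requires slightly more care, since $F$ is $(K^{-1},1)$-skew-primitive rather than $(1,K)$-skew-primitive, so $F$ itself is not a Taft generator. The remedy is to replace $F$ by $x := FK$. Using $\Delta(F) = K^{-1} \otimes F + F \otimes 1$ and $\Delta(K) = K \otimes K$, a direct computation gives $\Delta(FK) = 1 \otimes FK + FK \otimes K$, so $x = FK$ is $(1,K)$-skew-primitive, as required for a Taft generator with grouplike $g := K$. Next I would verify the remaining relations: $g^n = K^n = 1$; the identity $FK \cdot K = q^2\, K \cdot FK$ (which follows from $KF = q^{-2} FK$) gives $xg = q^2 gx$, so the twisting parameter is $\xi = q^2$; and normal-ordering $(FK)^n$ via $KF = q^{-2} FK$ produces a nonzero scalar multiple of $F^n K^n = 0$, whence $x^n = 0$. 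Thus $g \mapsto K$, $x \mapsto FK$ defines a Hopf algebra map $T(n,q^2) \to \uqsll$, and $q^2$ is again a primitive $n$-th root of unity.

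It remains to see that both maps are isomorphisms. Each is surjective because its image contains the generators of the target Borel, where for $\uqsll$ one observes $F = x g^{-1}$ lies in the image. Finally, both source Taft algebras have dimension $n^2$, while the PBW basis $\{F^i K^j E^k\}$ of $\uqsl$ shows that $\uqslu$ and $\uqsll$ each have dimension $n^2$ as well; a surjection between finite-dimensional vector spaces of equal dimension is a bijection. The only genuinely delicate point is the orientation of the skew-primitive element $F$, which forces the substitution $x = FK$ in the negative Borel and makes its skew-primitivity and nilpotency the one spot needing a short computation; everything else is a routine verification of relations.
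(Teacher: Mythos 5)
Your proof is correct and follows essentially the same route as the paper, which simply identifies $K$ with $g$, $E$ with $x$ for $\uqslu$, and $F$ with $g^{-1}x$ for $\uqsll$ (i.e.\ $x\mapsto KF$, which is the scalar multiple $q^{-2}FK$ of your choice $x = FK$, and equally $(1,K)$-skew-primitive). The paper leaves the verification as "easy to check"; your write-up just supplies the relation checks and the dimension count explicitly.
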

\begin{proof}
This is easy to check: identify $K$ with $g$ for both $\uqslu$ and $\uqsll$,  and $E$ with $x$ for  $\uqslu$, and $F$ with $g^{-1}x$ for $\uqsll$. 
\end{proof}

Since $\uqsl$ is generated by the Hopf subalgebras $\uqslu$ and $\uqsll$, an action of $\uqsl$ on a path algebra $\kk Q$ is determined by its restriction to these Hopf subalgebras. 
Since these actions are given by the results on Taft actions on $\kk Q$ in the previous sections, we can classify actions of $\uqsl$ by determining some additional compatibility conditions on the scalar parameters from these Taft actions. 
We begin with the result below.

\begin{proposition} \label{prop:UQ0} Let $\uqsl$ act on the path algebra of a set of vertices $\{1, \dotsc, m\}$, labeled so that $K \cdot e_i = e_{i+1}$ with subscripts taken modulo $m$.
\begin{enumerate}[(i)]
\item If $m < n$, then $m=1$ or $m=2$ and the action factors through the quotient $\uqsl/\langle E,\, F \rangle$;
\item If $m = n$, then
\begin{equation} \label{eq:Uqei}
E\cdot e_i = \gamma^E q^{-2i}( e_{i} - q^{-2} e_{i+1}) , \quad \quad \quad
F \cdot e_i = \gamma^F q^{2i} ( e_{i-1} - q^2 e_{i}),
\end{equation}
for some $\gamma^E, \gamma^F \in \kk$. 
If furthermore $n \geq 3$, then the action is subject to the restriction
\begin{equation} \label{eq:Ueicond}
-\gamma^E \gamma^F q^{-1}(q^2-1)^2 ~=~ 1.
\end{equation}
\end{enumerate}
\end{proposition}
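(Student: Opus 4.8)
The plan is to use that $\uqsl$ is generated by its two Borel subalgebras $\uqslu$ and $\uqsll$, each a Taft algebra by Lemma~\ref{lem:borels}, and then to impose the single extra relation \eqref{eq:efrelation} that couples them. Since $q$ is a primitive $2n$-th root of unity, both $q^2$ and $q^{-2}$ are primitive $n$-th roots of unity, so Proposition~\ref{prop:TaftactQ0} applies to each Borel (with $\zeta$ replaced by $q^{-2}$ or $q^2$ throughout its statement and proof).

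First I would pin down the action of $E$ and $F$ on the vertices. Restricting the action to $\uqslu \simeq T(n,q^{-2})$ via $K \mapsto g$, $E \mapsto x$, Proposition~\ref{prop:TaftactQ0} gives
\[
E \cdot e_i = \gamma^E q^{-2i}(e_i - q^{-2} e_{i+1})
\]
for some $\gamma^E \in \kk$, with $\gamma^E = 0$ forced when $m < n$. For $F$ I would restrict to $\uqsll \simeq T(n,q^2)$ via $K \mapsto g$, $F \mapsto g^{-1}x$, so that the skew-primitive generator $x$ corresponds to $KF$; the Taft formula for $x\cdot e_i$ combined with $F = K^{-1}(KF)$ then yields
\[
F \cdot e_i = \gamma^F q^{2i}(e_{i-1} - q^2 e_i),
\]
again with $\gamma^F = 0$ forced when $m < n$. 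This establishes \eqref{eq:Uqei}.

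For part (i), when $m < n$ the above shows $E$ and $F$ act by $0$, so the action factors through $\uqsl/\langle E, F\rangle$. Applying \eqref{eq:efrelation} to $e_i$ then forces $(K - K^{-1})\cdot e_i = 0$, i.e. $e_{i+1} = e_{i-1}$ for every $i$; hence the $\langle K\rangle$-orbit has size dividing $2$, giving $m \in \{1,2\}$.

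For the restriction \eqref{eq:Ueicond} in part (ii) with $m = n$, I would apply \eqref{eq:efrelation} to $e_i$ and expand both composites using \eqref{eq:Uqei}. A direct computation (tracking the index shifts carefully) gives
\[
(EF - FE)\cdot e_i = \gamma^E\gamma^F (q^2 - 1)(e_{i-1} - e_{i+1}),
\]
whereas the right-hand side of \eqref{eq:efrelation} acts as $(e_{i+1} - e_{i-1})/(q - q^{-1})$. When $n \geq 3$ the trivial paths $e_{i-1}$ and $e_{i+1}$ are distinct basis elements, so matching coefficients forces $\gamma^E\gamma^F(q^2-1) = -(q - q^{-1})^{-1}$; substituting $q - q^{-1} = (q^2-1)/q$ and rearranging produces exactly \eqref{eq:Ueicond}. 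I expect the only genuine obstacle to be this commutator calculation, which is routine but bookkeeping-heavy. Finally I would observe that for $n = 2$ one has $e_{i-1} = e_{i+1}$, so both sides of \eqref{eq:efrelation} vanish on each $e_i$ and no constraint arises, which explains why \eqref{eq:Ueicond} is imposed only for $n \geq 3$.
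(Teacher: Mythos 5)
Your proposal is correct and follows essentially the same route as the paper: restrict to the two Borel subalgebras via Lemma~\ref{lem:borels} to import Proposition~\ref{prop:TaftactQ0} (yielding \eqref{eq:Uqei} and the vanishing of $E,F$ when $m<n$), then feed \eqref{eq:Uqei} into the commutator relation \eqref{eq:efrelation} to get $(EF-FE)\cdot e_i=\gamma^E\gamma^F(q^2-1)(e_{i-1}-e_{i+1})$, which forces $m\in\{1,2\}$ in case (i) and the constraint \eqref{eq:Ueicond} exactly when $n\geq 3$. Your coefficient matching $\gamma^E\gamma^F(q^2-1)=-(q-q^{-1})^{-1}$ does rearrange to \eqref{eq:Ueicond}, so there is nothing to correct.
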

\begin{proof}
Part (i) is a result of Proposition~\ref{prop:TaftactQ0}, translated through the isomorphisms of Lemma \ref{lem:borels}.  The condition $m=1$ or $m=2$ comes from the relation \eqref{eq:efrelation}, which implies that $K-K^{-1}$ acts by zero in this case. 
For (ii), Proposition~\ref{prop:TaftactQ0} also gives the formulas \eqref{eq:Uqei}, and the first row of relations in the definition of $u_q(\mathfrak{sl}_2)$ then hold. On the other hand, substituting~\eqref{eq:Uqei}  into the relation \eqref{eq:efrelation} of $u_q(\mathfrak{sl}_2)$ yields
\begin{equation*}
(EF - FE)\cdot e_i = \gamma^E \gamma^F (q^2-1)(e_{i-1} - e_{i+1}) \quad \quad 
\text{while} \quad \quad
(K-K^{-1})\cdot e_i = e_{i+1} - e_{i-1}.
\end{equation*}
If $n=m=2$, then $e_{i-1} = e_{i+1}$ and the relation~\eqref{eq:efrelation} imposes no restriction on the action \eqref{eq:Uqei}.
If $n=m\geq 3$, then $e_{i+1} \neq e_{i-1}$ and the relation~\eqref{eq:efrelation} imposes the restriction \eqref{eq:Ueicond} on the action \eqref{eq:Uqei}.
\end{proof}

We get the following immediate consequence.

\begin{corollary}
If $n \geq 3$, then either one of $\gamma^E$ or $\gamma^F$ determines the other. So, an action of $\uqsl$ on a $\mathbb{Z}_n$-orbit of  vertices is completely determined by the action of a Borel subalgebra, $\uqslu$ or $\uqsll$. \qed
\end{corollary}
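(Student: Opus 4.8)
The plan is to read this corollary directly off Proposition~\ref{prop:UQ0}, since the whole point of that proposition is that an action of $\uqsl$ on a single $\mathbb{Z}_n$-orbit of $m$ vertices (with $m \mid n$ and $K \cdot e_i = e_{i+1}$) is governed entirely by the two scalars $\gamma^E$ and $\gamma^F$ together with the fixed $K$-action. The action of the Borel $\uqslu$ is recovered from $\gamma^E$ via the formula for $E \cdot e_i$ plus the common $K$-action, and dually $\uqsll$ from $\gamma^F$; so it suffices to show that, for $n \geq 3$, one of the two scalars pins down the other.

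First I would split into the two cases of Proposition~\ref{prop:UQ0}. When $m < n$, part~(i) says $m \in \{1,2\}$ and the action factors through $\uqsl/\langle E, F\rangle$, so $E$ and $F$ both act by $0$; hence $\gamma^E = \gamma^F = 0$, each scalar trivially determines the other, and the whole action reduces to the $K$-action shared by both Borels. When $m = n$, part~(ii) supplies the formulas \eqref{eq:Uqei} and, crucially for $n \geq 3$, the constraint \eqref{eq:Ueicond}, namely $-\gamma^E \gamma^F q^{-1}(q^2-1)^2 = 1$.

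Next I would check that the coefficient $-q^{-1}(q^2-1)^2$ is nonzero. Since $q$ is a primitive $2n$-th root of unity and $n \geq 3$, the element $q^2$ has order exactly $n \geq 3$, so $q^2 \neq 1$ and thus $(q^2-1)^2 \neq 0$. Therefore \eqref{eq:Ueicond} is a nondegenerate relation $\gamma^E \gamma^F = c$ with $c = -q(q^2-1)^{-2}$ a fixed nonzero constant. This forces both scalars to be nonzero and writes each as $c$ divided by the other, so $\gamma^E$ determines $\gamma^F = c/\gamma^E$ and conversely. Combining the two cases gives the first assertion, and since each Borel's action is recovered from its scalar plus the common $K$-action, the full $\uqsl$-action on the orbit is determined by either Borel, which is the second assertion.

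I do not anticipate a genuine obstacle here, as the corollary really is an immediate bookkeeping consequence of Proposition~\ref{prop:UQ0}. The only points requiring care are the nonvanishing $q^2 \neq 1$ that makes \eqref{eq:Ueicond} invertible in either variable, and the clean treatment of the degenerate orbit sizes $m \in \{1,2\}$, where \eqref{eq:Ueicond} is vacuous but $\gamma^E = \gamma^F = 0$ still makes the claim hold.
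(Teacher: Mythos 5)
Your proposal is correct and is essentially the argument the paper intends (the corollary is stated with no written proof precisely because it follows immediately from Proposition~\ref{prop:UQ0}): the relation \eqref{eq:Ueicond} has nonzero coefficient $-q^{-1}(q^2-1)^2$ since $q^2$ is a primitive $n$-th root of unity with $n\geq 3$, so $\gamma^E\gamma^F$ equals a fixed nonzero constant and each scalar determines the other, while in the degenerate case $m<n$ both scalars vanish. Your explicit handling of the small-orbit case and the nonvanishing check are exactly the right bookkeeping points.
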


We now consider $\uqsl$-actions on Type A and Type B $\mathbb{Z}_n$-minimal quivers.  Note that $\uqsl$ can only act on subquivers of $K_m$ and $K_{m, m'}$ for $m, m' \in \{1, 2, n\}$, by Proposition \ref{prop:UQ0}.  We only give theorems describing the case $m=m'=n \geq 3$ here, but the other cases can be worked out similarly.

\begin{theorem}\label{thm:Uqslaction}
Retain the notation of Section \ref{sec:minimalQ} with $K$ in place of $g$, so the action of $K$ is fixed. Namely, $K \cdot a^i_j = \mu_{i,j} a^{i+1}_{j+1}$ for Type A and $K \cdot b^i_j = \mu_{i,j} b^{i+1}_{j+1}$ for Type B. Assume $n \geq 3$. Then any $\uqsl$-action on the path algebra of a $\mathbb{Z}_n$-minimal subquiver $Q$ of $K_n$ is given by 
\begin{equation}\label{eq:Uqa}
E\cdot a^i_j = \gamma^E(q^{-2j}a^{i}_{j} - q^{-2i-2}a^{i+1}_{j+1}) + \lambda^E_{i,j} a^{i}_{j+1}, \quad \quad
F \cdot a^i_j =\gamma^F(q^{2j} a^{i-1}_{j-1} - q^{2i+2} a^i_j) + \lambda^F_{i,j}a^{i-1}_j,
\end{equation}
subject to the restriction \eqref{eq:Ueicond} along with:
\begin{itemize}
\item $\lambda^E_{i,j} = 0 ,~~\text{ if }i=j \text{ or the arrow } a^i_{j+1} \text{ does not exist in }Q$;
\smallskip
\item $\lambda^F_{i,j} = 0, ~~ \text{ if }i=j \text{ or the arrow } a^{i-1}_{j} \text{ does not exist in }Q$;
\smallskip
\item $\lambda^E_{i+1,j+1} = q^{-2} \lambda^E_{i,j}, \quad \lambda^F_{i+1,j+1} = q^{2} \lambda^F_{i,j}, \quad \text{and}\quad \lambda^F_{i,j}\lambda^E_{i-1,j}  = \lambda^E_{i,j} \lambda^F_{i,j+1}.$
\end{itemize}

\medskip
\noindent Any $u_q(\mathfrak{sl}_2)$-action on the path algebra of a $\mathbb{Z}_n$-minimal subquiver $Q$ of $K_{n,n}$ is given by
\begin{equation}
E\cdot b^i_j =
(\gamma^E_-) q^{-2j}b^{i}_{j} - (\gamma^E_+) q^{-2i-2}b^{i+1}_{j+1} + \lambda^E_{i,j} b^{i}_{j+1}, \quad \quad
F \cdot a^i_j =
(\gamma^F_-) q^{2j} b^{i-1}_{j-1} - (\gamma^F_+) q^{2i+2} b^i_j + \lambda^F_{i,j}b^{i-1}_j,
\end{equation}
subject to restrictions of the form \eqref{eq:Ueicond} on $\gamma_{\pm}^E$ and $\gamma_{\pm}^F$, along with
\begin{itemize}
\item $\lambda^E_{i, j} = 0, ~~$ if $i=j$ or the arrow $b^i_{j+1}$ does not exist in $Q$;
\smallskip
\item $\lambda^F_{i,j} = 0, ~~$ if $i=j$ or the arrow $b^{i-1}_j$ does not exist in $Q$;
\smallskip
\item $\lambda^E_{i+1,j+1} = q^{-2} \lambda^E_{i,j}, \quad \lambda^F_{i+1,j+1} = q^{2} \lambda^F_{i,j}, \quad \text{and} \quad \lambda^F_{i,j}\lambda^E_{i-1,j}  = \lambda^E_{i,j} \lambda^F_{i,j+1}$;
\smallskip
\item $(\gamma^E_+)^n = (\gamma^E_-)^n + \prod_{\ell=0}^{n-1} \lambda^E_{i, j+\ell}$ \quad and \quad $(\gamma^F_+)^n = (\gamma^F_-)^n + \prod_{\ell=0}^{n-1} \lambda^F_{i, j+\ell}$.
\end{itemize}
\end{theorem}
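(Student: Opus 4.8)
The plan is to exploit that $\uqsl$ is generated by its two Borel subalgebras $\uqslu$ and $\uqsll$, which by Lemma~\ref{lem:borels} are the Taft algebras $T(n,q^{-2})$ and $T(n,q^2)$, respectively. Consequently any $\uqsl$-action on $\kk Q$ restricts to a Taft action on each Borel, and conversely a pair of Taft actions on the two Borels assembles into a $\uqsl$-action precisely when the single remaining cross-relation \eqref{eq:efrelation} is satisfied: the relations $KE=q^2EK$, $K^n=1$, $E^n=0$ all live inside $\uqslu$, while $KF=q^{-2}FK$ and $F^n=0$ live inside $\uqsll$, so these are already guaranteed by the two Taft classifications. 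Thus the whole problem reduces to (i) transcribing the two Taft descriptions and (ii) extracting the extra constraints forced by $EF-FE=(K-K^{-1})/(q-q^{-1})$.

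For step (i), I would apply Theorem~\ref{thm:T(n)typeA} (Type A) and Theorem~\ref{thm:T(n)typeB} (Type B) to the $E$-action through the identification $E\leftrightarrow x$, $K\leftrightarrow g$ of $\uqslu$ with $T(n,q^{-2})$; substituting $\zeta=q^{-2}$ immediately yields the stated formula for $E\cdot a^i_j$ (resp.\ $E\cdot b^i_j$), the vanishing conditions on $\lambda^E_{i,j}$, and the recursion $\lambda^E_{i+1,j+1}=q^{-2}\lambda^E_{i,j}$. For the $F$-action I would use the identification of $\uqsll$ with $T(n,q^2)$, under which the skew-primitive generator corresponds to $\hat x:=KF$ (cf.\ the proof of Lemma~\ref{lem:borels} and the opposite-action bijection of Lemma~\ref{lem:oppositeaction}); applying the same theorems to $\hat x$ with $\zeta=q^2$ and then writing $F\cdot p=K^{-1}\cdot(\hat x\cdot p)$ converts each formula into the stated formula for $F$, the $K^{-1}$-twist shifting indices from $a^{i+1}_{j+1}$ down to $a^{i-1}_{j-1}$ and from $a^i_{j+1}$ to $a^{i-1}_j$. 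This produces $\lambda^F_{i+1,j+1}=q^2\lambda^F_{i,j}$ and $\lambda^F_{i,j}=0$ when $a^{i-1}_j$ is absent; here one uses that $Q$ is $\mathbb{Z}_n$-stable, so $a^{i-1}_j$ exists if and only if the arrow $a^i_{j+1}$ produced by the Taft formula for $\hat x$ exists.

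The main obstacle is step (ii): imposing \eqref{eq:efrelation} on the arrows. I would compute $EF\cdot a^i_j$ and $FE\cdot a^i_j$ from the formulas just obtained, collect the result in the path basis, and match it against $(q-q^{-1})^{-1}(K-K^{-1})\cdot a^i_j$, which equals $(q-q^{-1})^{-1}(a^{i+1}_{j+1}-a^{i-1}_{j-1})$ up to the appropriate $\mu$-factors. The terms proportional to $a^{i+1}_{j+1}$ and $a^{i-1}_{j-1}$ reproduce exactly the scalar restriction \eqref{eq:Ueicond} relating $\gamma^E$ and $\gamma^F$ already found on vertices in Proposition~\ref{prop:UQ0}, confirming consistency; the genuinely new information comes from the coefficient of the shifted same-length arrow $a^{i-1}_j$, which must vanish and yields precisely the cross-condition $\lambda^F_{i,j}\lambda^E_{i-1,j}=\lambda^E_{i,j}\lambda^F_{i,j+1}$. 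Careful index bookkeeping is the crux here, since the two Taft actions shift indices in opposite directions and the mixed $\gamma\lambda$ cross-terms must be checked to cancel.

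Finally, for the Type B case I would additionally invoke the $x^n=0$ portion of Theorem~\ref{thm:T(n)typeB} separately for each Borel: applied to $E$ it gives $(\gamma^E_+)^n=(\gamma^E_-)^n+\prod_{\ell}\lambda^E_{i,j+\ell}$, and applied to $\hat x=KF$ it gives the analogous identity for the $F$-parameters. For sufficiency, since $\kk Q$ is generated by its vertices and arrows and the two Taft classifications already certify all relations internal to each Borel together with the module-algebra axioms, it remains only to verify that \eqref{eq:efrelation} holds as operators once the listed conditions are imposed; this is the reverse of the computation in step (ii) and requires no new idea. I would record that verification to complete the equivalence.
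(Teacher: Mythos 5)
Your overall strategy is the same as the paper's: restrict to the two Borel subalgebras, invoke Lemma~\ref{lem:borels} to import the Taft classifications of Theorems~\ref{thm:T(n)typeA} and~\ref{thm:T(n)typeB}, and then impose the single cross-relation \eqref{eq:efrelation} by matching coefficients in the path basis. However, there is a genuine gap in your step (ii), and it propagates back into step (i). You assert that the coefficients of $a^{i+1}_{j+1}$ and $a^{i-1}_{j-1}$ in $(EF-FE)\cdot a^i_j$ merely ``reproduce'' the vertex restriction \eqref{eq:Ueicond} and confirm consistency. They do not: the coefficient of $a^{i-1}_{j-1}$ yields the new identity
\[
\gamma^E\gamma^F(q^2-1)+\mu_{i,j}\,(q-q^{-1})^{-1}=0,
\]
which, combined with \eqref{eq:Ueicond}, forces $\mu_{i,j}=1$ for all $i,j$. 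This is precisely the step that eliminates the parameters $\mu_{i,j}$ from the general Taft formulas. Without it, your transcription in step (i) cannot ``immediately yield'' the stated formula \eqref{eq:Uqa} for $E\cdot a^i_j$ (Theorem~\ref{thm:T(n)typeA} gives $x\cdot a^i_j=\gamma(\zeta^j a^i_j-\zeta^{i+1}\mu_{i,j}a^{i+1}_{j+1})+\lambda_{i,j}a^i_{j+1}$, with the $\mu_{i,j}$ present), nor the clean recursions $\lambda^E_{i+1,j+1}=q^{-2}\lambda^E_{i,j}$ and $\lambda^F_{i+1,j+1}=q^{2}\lambda^F_{i,j}$, which come from $\lambda_{i+1,j+1}\mu_{i,j}=\zeta\lambda_{i,j}\mu_{i,j+1}$ only after all $\mu$'s are set to $1$. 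As written, your argument would leave a family of $\mu_{i,j}$'s in the answer and would not establish the theorem as stated.

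Two smaller points. First, the new $\lambda$-condition $\lambda^F_{i,j}\lambda^E_{i-1,j}=\lambda^E_{i,j}\lambda^F_{i,j+1}$ comes from the coefficient of $a^{i-1}_{j+1}$ (the doubly shifted arrow), not of $a^{i-1}_{j}$; the coefficient of $a^{i-1}_j$ instead carries mixed $\gamma$-$\lambda$ terms whose cancellation is implied by the other restrictions. Second, your treatment of the $F$-action via $\hat x:=KF$ and $F\cdot p=K^{-1}\cdot(\hat x\cdot p)$ is a reasonable way to transport the Taft classification to $\uqsll$ and is consistent with the identification $F\leftrightarrow g^{-1}x$ in Lemma~\ref{lem:borels}; the rest of your outline (sufficiency by reversing the coefficient computation, and the two separate $x^n=0$ conditions in Type B giving the $\gamma_\pm^E$ and $\gamma_\pm^F$ identities) matches the paper. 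Repair the $\mu_{i,j}=1$ step and the argument goes through.
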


\begin{proof}
First consider the Type A case. Proposition~\ref{prop:UQ0} gives the $\uqsl$-action on the vertices, imposing the restriction~\eqref{eq:Ueicond}. Theorem \ref{thm:T(n)typeA}, translated through the isomorphisms of Lemma~\ref{lem:borels}, initially gives expressions for $E\cdot a^i_j$ and $F\cdot a^i_j$ which involve parameters $\mu_{i,j}$. However, we examine the coefficients of various arrows in the relation \eqref{eq:efrelation} applied to $a^i_j$, and find that the coefficient of $a^{i-1}_{j-1}$ gives
\begin{equation} \label{eq:ai-1,j-1}
\gamma^E \gamma^F(q^2-1) + \mu_{i,j}(q-q^{-1})^{-1} =0. 
\end{equation}
By applying \eqref{eq:Ueicond}, this simplifies to $(\mu_{i,j}-1)(q-q^{-1})^{-1} = 0$. So, $\mu_{i,j} = 1$ for all $i, j$, which gives our formulas \eqref{eq:Uqa}.
Similarly, the coefficient of $a^{i-1}_{j+1}$ in the relation \eqref{eq:efrelation} applied to $a^i_j$ implies that
\begin{equation}
\lambda^F_{i,j}\lambda^E_{i-1,j}  - \lambda^E_{i,j} \lambda^F_{i,j+1} = 0.
\end{equation}
Theorem \ref{thm:T(n)typeA} gives the remaining restrictions on the scalars, and the coefficients of other arrows yield restrictions that are already implied by those above.

For Type B, the action is derived exactly as in the Type A case, by replacing $a^{\star}_{\star}$ with $b^{\star}_{\star}$, and $\gamma$ with $\gamma_{\pm}$, appropriately.
\end{proof}

\subsection{Actions of the double $D(T(n))$} \label{sec:double}
We take the presentation of the Drinfeld double of the $n$-th Taft algebra in \cite[Definition-Theorem~3.1]{Chen}. Namely, by \cite[Theorem~3.3]{Chen}, we have that the double is isomorphic to the Hopf algebra $H_n(p,q)$ generated by $a,b,c,d$. We take $p=1, ~q=\zeta^{-1}, ~a=x, ~b=g, ~c=G, ~d=X$ to get that: 

\begin{definition}[$D(T(n))$] The {\it Drinfeld double  $D(T(n))$ of the $n$-th Taft algebra} is generated by $g, x, G, X$, subject to relations:
\[
xg = \zeta gx, \quad GX = \zeta XG,  \quad gX=\zeta Xg,  \quad 
xG=\zeta Gx, \quad gG= Gg,   \quad g^n = G^n = 1,  \quad  x^n = X^n = 0,
\]
\begin{equation} \label{eq:xX}
xX-\zeta Xx = \zeta(gG-1).
\end{equation}

Here, $g$ and $G$ are grouplike elements, $x$ is $(1,g)$-a skew primitive element, and $X$ is a $(1,G)$-skew primitive element. Here, $g, x$ generate the Hopf subalgebra $T(n)$, and $G=g^*, X=x^*$ generate $(T(n)^{op})^* = (T(n)^*)^{cop}$.
\end{definition}

The following lemma is easy to check.

\begin{lemma}\label{lem:Dsubalg}
The Hopf subalgebra of $D(T(n))$ generated by $g, x$ is isomorphic to $T(n)=T(n,\zeta)$, as Hopf algebras. Moreover, the Hopf subalgebra of $D(T(n))$ generated by $G, X$ is isomorphic to $T(n,\zeta^{-1})$, as Hopf algebras. \qed
\end{lemma}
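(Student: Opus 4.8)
The plan is to construct explicit Hopf algebra isomorphisms onto the two subalgebras by invoking the presentation of the Taft algebra, and then to verify injectivity via a dimension count. For the first claim, I would observe that the elements $g, x \in D(T(n))$ satisfy precisely the relations $g^n = 1$, $x^n = 0$, $xg = \zeta gx$ that define $T(n) = T(n,\zeta)$ (these appear verbatim among the defining relations of $D(T(n))$), and that moreover $g$ is grouplike and $x$ is $(1,g)$-skew-primitive with the same coproduct as in $T(n)$. Hence, by the universal property of the presentation of $T(n,\zeta)$, the assignment $g \mapsto g$, $x \mapsto x$ extends to a homomorphism of Hopf algebras $\phi\colon T(n,\zeta) \to D(T(n))$ whose image is exactly the subalgebra generated by $g$ and $x$.

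It then remains to check that $\phi$ is injective. Since $T(n,\zeta)$ is $n^2$-dimensional with basis $\{g^i x^j : 0 \le i, j \le n-1\}$, it suffices to show that these images are linearly independent in $D(T(n))$. This follows from the structure theory of the Drinfeld double: as a coalgebra $D(T(n)) \cong (T(n)^*)^{cop} \otimes T(n)$, so the tensor factor $T(n)$ embeds as a Hopf subalgebra, and in the presentation of \cite{Chen} this copy is precisely the span of $\{g^i x^j\}$. Equivalently, one may invoke the PBW-type basis $\{g^i x^j G^k X^l : 0 \le i,j,k,l \le n-1\}$ of $D(T(n))$; the subset with $k = l = 0$ is linearly independent, yielding injectivity of $\phi$ and hence the first isomorphism.

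For the second claim I would run the same argument after rewriting the relevant relation. The generators $G, X$ satisfy $G^n = 1$, $X^n = 0$, with $G$ grouplike and $X$ being $(1,G)$-skew-primitive, and the relation $GX = \zeta XG$ rearranges to $XG = \zeta^{-1} GX$. Comparing with the defining relation $xg = \xi gx$ of $T(n,\xi)$ under the correspondence $x \mapsto X$, $g \mapsto G$, one reads off $\xi = \zeta^{-1}$. Thus $G, X$ satisfy exactly the relations of $T(n,\zeta^{-1})$, giving a Hopf algebra surjection $T(n,\zeta^{-1}) \twoheadrightarrow \langle G, X \rangle$; injectivity again follows from the fact that the other tensor factor $(T(n)^*)^{cop}$ embeds as the $n^2$-dimensional Hopf subalgebra of $D(T(n))$ spanned by $\{G^k X^l\}$.

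The only real obstacle is this injectivity/dimension step, since checking that the generators respect the relations and the coalgebra structure is immediate from the presentation. I expect to dispatch it cleanly by appealing to the standard fact that both tensor factors of a Drinfeld double $D(H)$ embed as Hopf subalgebras, or equivalently to the explicit basis of $D(T(n))$ recorded in \cite{Chen}.
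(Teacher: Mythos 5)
Your argument is correct and is exactly the verification the paper leaves implicit: the paper states this lemma with ``easy to check'' and no written proof, and the intended check is precisely yours---the generators $g,x$ (resp.\ $G,X$) satisfy the defining relations of $T(n,\zeta)$ (resp.\ $T(n,\zeta^{-1})$, after rewriting $GX=\zeta XG$ as $XG=\zeta^{-1}GX$) together with the matching coalgebra structure, and injectivity of the resulting surjection follows from the standard fact that both tensor factors of the Drinfeld double embed, i.e.\ from the PBW-type basis $\{g^ix^jG^kX^l\}$ of $D(T(n))$. Nothing further is needed.
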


To give the complete classification of $D(T(n))$-actions on path algebras of quivers, one would need to define {\it $\mathbb{Z}_n \times \mathbb{Z}_n$-minimal quivers and consider the $D(T(n))$-action on these}, since $\mathbb{Z}_n \times \mathbb{Z}_n$ is isomorphic to the group of grouplike elements of $D(T(n))$. This is the subject of future work. Our aim for now is to exhibit an action of $D(T(n))$ on $\kk Q$, where $Q$ admits an action of $\mathbb{Z}_n$. We begin by providing an action of $D(T(n))$ on  a $\mathbb{Z}_n$-orbit of vertices. 

\begin{proposition} \label{prop:DQ0} An action of $D(T(n))$ on $\kk Q_0$ where $Q_0 = \{1, \dotsc, n\}$ is given by 
\begin{equation} \label{eq:Dei}
g \cdot e_i = e_{i+1}, \quad \quad G \cdot e_i = e_{i+1}, \quad \quad
x \cdot e_i = \gamma^x \zeta^i(e_i - \zeta e_{i+1}), \quad \quad 
X \cdot e_i = \gamma^X \zeta^{-i}(e_i - \zeta^{-1} e_{i+1}).
\end{equation}
for some  $\gamma^x, \gamma^X \in \kk$.
If $n \geq 3$, then these scalars are subject to the restriction
\begin{equation} \label{eq:Deicond}
\gamma^x \gamma^X(1- \zeta^{-1}) = 1.
\end{equation}
\end{proposition}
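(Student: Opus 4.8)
The plan is to build the $D(T(n))$-action out of the two Taft sub-actions supplied by Lemma~\ref{lem:Dsubalg}, and then to check that exactly one of the defining relations of $D(T(n))$ --- the commutator relation \eqref{eq:xX} --- produces the restriction \eqref{eq:Deicond}. First I would invoke Lemma~\ref{lem:Dsubalg}: the subalgebra $\langle g, x\rangle \cong T(n)=T(n,\zeta)$ and the subalgebra $\langle G, X\rangle \cong T(n,\zeta^{-1})$. Choosing $g\cdot e_i = e_{i+1}$ and $G\cdot e_i = e_{i+1}$ makes $Q_0$ a single faithful $\mathbb{Z}_n$-orbit (so $m=n$), whence Proposition~\ref{prop:TaftactQ0}(ii), applied once with the root of unity $\zeta$ and once with $\zeta^{-1}$, forces the stated forms of $x\cdot e_i$ and $X\cdot e_i$ in \eqref{eq:Dei}. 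This simultaneously guarantees that all relations internal to each Taft subalgebra hold (in particular $x^n = X^n = 0$, $xg = \zeta gx$, and $GX = \zeta XG$) and that the module-algebra axioms of Definition~\ref{def:Hopfact} are satisfied on these generators.

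Since $D(T(n))$ is generated as an algebra by $g,x,G,X$, and since the set of elements of a Hopf algebra satisfying the module-algebra conditions forms a subalgebra, it then suffices to (i) verify the remaining ``cross'' relations of $D(T(n))$ on $\kk Q_0$, so that $\kk Q_0$ is genuinely a module, and (ii) observe that the grouplikes act as algebra automorphisms while $x,X$ act as the appropriate skew-derivations, which was already built into the Taft structures. The relation $gG = Gg$ is immediate since both generators act by the same cyclic shift, and $gX = \zeta Xg$ together with $xG = \zeta Gx$ are confirmed by a one-line substitution on each $e_i$.

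The heart of the argument is relation \eqref{eq:xX}. Evaluating both sides on $e_i$ and simplifying the powers of $\zeta$, I expect the $e_{i+1}$-terms to cancel, leaving
\[
(xX - \zeta Xx)\cdot e_i ~=~ \gamma^x\gamma^X(1-\zeta)(e_i - e_{i+2}),
\qquad
\zeta(gG-1)\cdot e_i ~=~ \zeta(e_{i+2}-e_i).
\]
When $n=1$ or $n=2$ one has $e_{i+2}=e_i$ modulo $n$, so both sides vanish and \eqref{eq:xX} imposes no condition; this is precisely why no constraint appears for small $n$. When $n\geq 3$ the idempotents $e_i$ and $e_{i+2}$ are distinct, so matching coefficients forces $-\gamma^x\gamma^X(1-\zeta)=\zeta$, which rearranges to $\gamma^x\gamma^X(1-\zeta^{-1})=1$, namely \eqref{eq:Deicond}. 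Conversely, under this constraint relation \eqref{eq:xX} holds on every $e_i$, so the formulas \eqref{eq:Dei} do define the asserted action.

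The main obstacle is purely computational: keeping the exponents of $\zeta$ straight in the expansions of $xX\cdot e_i$ and $Xx\cdot e_i$ so as to see the cancellation of the $e_{i+1}$-coefficients and isolate the single scalar identity. Everything else is a short relation-by-relation check, and the dependence of the conclusion on the value of $n$ is entirely controlled by whether $e_i$ and $e_{i+2}$ coincide.
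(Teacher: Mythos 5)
Your proposal is correct and follows essentially the same route as the paper: the forms in \eqref{eq:Dei} come from the two Taft subalgebra actions (Proposition~\ref{prop:TaftactQ0} applied with $\zeta$ and $\zeta^{-1}$), the first row of relations of $D(T(n))$ is checked by direct substitution, and the constraint \eqref{eq:Deicond} is extracted from relation \eqref{eq:xX}, which is vacuous exactly when $e_{i+2}=e_i$, i.e.\ $n=2$. Your computation $(xX-\zeta Xx)\cdot e_i=\gamma^x\gamma^X(1-\zeta)(e_i-e_{i+2})$ and the resulting identity $-\gamma^x\gamma^X(1-\zeta)=\zeta$ check out; the paper's proof is simply a terser version of the same argument.
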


\begin{proof}
The formulas \eqref{eq:Dei} satisfy the first row of relations of $D(T(n))$. On the other hand, the restriction~\eqref{eq:Deicond} comes from substituting~\eqref{eq:Dei}  into the relation \eqref{eq:xX} of $D(T(n))$ when $n \geq 3$. If $n=2$, the restriction from~\eqref{eq:xX} is vacuous. 
\end{proof}

We proceed by extending the $D(T(n))$-action on vertices in Proposition~\ref{prop:DQ0} to yield an action of $D(T(n))$ on Type A and Type B $\mathbb{Z}_n$-minimal quivers.  As in the $\uqsl$ case, we only give theorems describing the cases when each orbit of vertices has $n$ elements here.

\begin{theorem}\label{thm:Daction}
Retain the notation of Section~\ref{sec:minimalQ}.
An action of $D(T(n))$ on the path algebra of a Type $A$ $\mathbb{Z}_n$-minimal subquiver $Q$ of $K_n$ is given by 
\begin{equation} \label{eq:DA}
\begin{array}{rl}
g \cdot a^i_j = \mu^g_{i,j} a^{i+1}_{j+1},  ~~&~~   G \cdot a^i_j = \mu^G_{i,j} a^{i+1}_{j+1},\\\\
x \cdot a^i_j = \gamma^x\left(\zeta^j a^i_j - \zeta^{i+1} \mu^g_{i,j} a^{i+1}_{j+1}\right) + \lambda^x_{i,j} a^i_{j+1},  ~~&  ~~
X \cdot a^i_j = \gamma^X\left(\zeta^{-j} a^i_j - \zeta^{-i-1} \mu^G_{i,j} a^{i+1}_{j+1}\right) + \lambda^X_{i,j} a^i_{j+1},\\
\end{array}
\end{equation}
subject to the restrictions:
{\small $$\mu_{i,i}^g = \mu_{i,i}^G =1 \text{ for all }i, \quad \quad 
\textstyle \prod_{\ell=0}^{n-1}\mu_{i+\ell,j+\ell}^g = \prod_{\ell=0}^{n-1}\mu_{i+\ell,j+\ell}^G = 1, 
\quad \quad  \mu_{i,j}^G \mu_{i+1,j+1}^g = \mu_{i,j}^g \mu_{i+1,j+1}^G,$$}

 \vspace{-.3in}
 
{\small $$\zeta \mu_{i,j+1}^g \lambda_{i,j}^x = \mu_{i,j}^g \lambda_{i+1,j+1}^x,
\quad \zeta \mu_{i,j+1}^G \lambda_{i,j}^X = \mu_{i,j}^G \lambda_{i+1,j+1}^X, \quad  \zeta \mu_{i,j}^g \lambda_{i+1,j+1}^X =  \mu_{i,j+1}^g \lambda_{i,j}^X, 
\quad 
\zeta \mu_{i, j+1}^G \lambda_{i,j}^x = \mu_{i,j}^G \lambda_{i+1,j+1}^x,$$ }

 \vspace{-.3in}
 
{\small $$\lambda_{i,j}^X \lambda_{i,j+1}^x - \zeta \lambda_{i,j}^x \lambda_{i,j+1}^X = 0, \quad  \quad \lambda_{i,j}^x = \lambda_{i,j}^X =0 \text{ if either } i=j, \text{ or the arrow } a^i_{j+1} \text{ does not exist in } Q.$$}

 \vspace{-.1in}
 
\noindent If $n \geq 3$, then we also impose the condition: $\gamma^x \gamma^X(1- \zeta^{-1}) = 1$.
\medskip

An action of $D(T(n))$ on the path algebra of a Type B minimal subquiver $Q$ of $K_{n,n}$ is given by
\begin{equation} \label{eq:DB}
\begin{array}{rl}
g \cdot b^i_j = \mu^g_{i,j} b^{i+1}_{j+1}, ~&~ G \cdot b^i_j = \mu^G_{i,j} b^{i+1}_{j+1}\\\\
x \cdot b^i_j = (\gamma_-^x) \zeta^j b^i_j - (\gamma_+^x) \zeta^{i+1} \mu^g_{i,j} b^{i+1}_{j+1} + \lambda^x_{i,j} b^i_{j+1}, ~&~
X \cdot b^i_j = (\gamma_-^X) \zeta^{-j} b^i_j - (\gamma_+^X) \zeta^{-i-1} \mu^G_{i,j} b^{i+1}_{j+1} + \lambda^X_{i,j} b^i_{j+1},\\
\end{array}
\end{equation}
subject to the restrictions:
{\small $$\mu_{i,i}^g = \mu_{i,i}^G =1 \text{ for all }i, \quad 
\textstyle \prod_{\ell=0}^{n-1}\mu_{i+\ell,j+\ell}^g = \prod_{\ell=0}^{n-1}\mu_{i+\ell,j+\ell}^G = 1, 
 \quad \mu_{i,j}^G \mu_{i+1,j+1}^g = \mu_{i,j}^g \mu_{i+1,j+1}^G,$$}
 
 \vspace{-.3in}
 
{\small $$\zeta \mu_{i,j+1}^g \lambda_{i,j}^x = \mu_{i,j}^g \lambda_{i+1,j+1}^x,
\quad 
\zeta \mu_{i,j+1}^G \lambda_{i,j}^X = \mu_{i,j}^G \lambda_{i+1,j+1}^X, \quad
\zeta \mu_{i,j}^g \lambda_{i+1,j+1}^X =  \mu_{i,j+1}^g \lambda_{i,j}^X, 
\quad \
\zeta \mu_{i, j+1}^G \lambda_{i,j}^x = \mu_{i,j}^G \lambda_{i+1,j+1}^x,$$}

 \vspace{-.3in}

{\small $$\lambda_{i,j}^X \lambda_{i,j+1}^x - \zeta \lambda_{i,j}^x \lambda_{i,j+1}^X = 0, \quad\lambda_{i,j}^x = \lambda_{i,j}^X =0 \text{ if either } i=j \text{ or the arrow } a^i_{j+1} \text{ does not exist in } Q$$}

 \vspace{-.1in}

\noindent If $n \geq 3$, then we also impose the condition: $(\gamma_\pm^x) (\gamma_\pm^X)(1- \zeta^{-1}) = 1.$
\end{theorem}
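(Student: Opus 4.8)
The plan is to leverage Lemma~\ref{lem:Dsubalg}, which identifies the subalgebra $\langle g,x\rangle$ of $D(T(n))$ with $T(n)$ and the subalgebra $\langle G,X\rangle$ with $T(n,\zeta^{-1})$. Since these two subalgebras generate $D(T(n))$, an action is determined by its restrictions to them, and those restrictions are exactly a $T(n)$-action and a $T(n,\zeta^{-1})$-action. First I would apply Theorem~\ref{thm:T(n)typeA} to the restriction to $\langle g,x\rangle$: this forces $g\cdot a^i_j=\mu^g_{i,j}a^{i+1}_{j+1}$ and the displayed form of $x\cdot a^i_j$ in \eqref{eq:DA}, and yields the conditions on $\{\mu^g_{i,j}\}$ and $\{\lambda^x_{i,j}\}$ (namely $\mu^g_{i,i}=1$, $\prod_\ell\mu^g_{i+\ell,j+\ell}=1$, $\zeta\mu^g_{i,j+1}\lambda^x_{i,j}=\mu^g_{i,j}\lambda^x_{i+1,j+1}$, and the vanishing of $\lambda^x_{i,j}$ when $i=j$ or $a^i_{j+1}\notin Q$). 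Running the same theorem with $\zeta$ replaced by $\zeta^{-1}$ gives the form of $G\cdot a^i_j$ and $X\cdot a^i_j$ together with the parallel conditions on $\{\mu^G_{i,j}\}$ and $\{\lambda^X_{i,j}\}$. This automatically disposes of the relations $g^n=G^n=1$, $x^n=X^n=0$, $xg=\zeta gx$, and $GX=\zeta XG$, while Proposition~\ref{prop:DQ0} supplies the action on vertices and, for $n\geq 3$, the scalar condition $\gamma^x\gamma^X(1-\zeta^{-1})=1$ of \eqref{eq:Deicond}.

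Two tasks then remain: verify the four cross relations of $D(T(n))$, and confirm that multiplication on $\kk Q$ is a morphism of $D(T(n))$-modules. For the second, the set of $h\in D(T(n))$ satisfying $h\cdot(pq)=\sum(h_1\cdot p)(h_2\cdot q)$ for all $p,q$ is a subalgebra, because $\Delta$ is an algebra map; since the restrictions to $\langle g,x\rangle$ and $\langle G,X\rangle$ are module-algebras by Theorem~\ref{thm:T(n)typeA}, these two generating subalgebras lie in it, so it is all of $D(T(n))$. For the first task I would apply each cross relation to a basis arrow $a^i_j$ and match coefficients. The relation $gG=Gg$ gives $\mu^G_{i,j}\mu^g_{i+1,j+1}=\mu^g_{i,j}\mu^G_{i+1,j+1}$; the relation $gX=\zeta Xg$ gives $\zeta\mu^g_{i,j}\lambda^X_{i+1,j+1}=\mu^g_{i,j+1}\lambda^X_{i,j}$; and $xG=\zeta Gx$ gives the symmetric identity $\zeta\mu^G_{i,j+1}\lambda^x_{i,j}=\mu^G_{i,j}\lambda^x_{i+1,j+1}$. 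These three are short and reproduce exactly the listed $\mu$-commutation and mixed $\mu$--$\lambda$ conditions.

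The main obstacle is the final cross relation \eqref{eq:xX}, namely $xX-\zeta Xx=\zeta(gG-1)$, applied to $a^i_j$. Both $xX\cdot a^i_j$ and $Xx\cdot a^i_j$ are supported on the six-element set $\{a^i_j,\,a^{i+1}_{j+1},\,a^{i+2}_{j+2},\,a^i_{j+1},\,a^{i+1}_{j+2},\,a^i_{j+2}\}$, while $\zeta(gG-1)\cdot a^i_j$ lives on only $\{a^i_j,\,a^{i+2}_{j+2}\}$, so six coefficient identities must be extracted, with careful bookkeeping of the powers of $\zeta$ being the delicate part. I expect the outcome to be: the coefficients of $a^i_j$ and of $a^{i+2}_{j+2}$ each collapse, after invoking the $\mu$-commutation identity, to the single condition $\gamma^x\gamma^X(1-\zeta^{-1})=1$ already imposed for $n\geq3$ (for $n=2$ these two terms coincide and the condition is vacuous, as in Proposition~\ref{prop:DQ0}); the coefficients of $a^{i+1}_{j+1}$ and of $a^i_{j+1}$ vanish identically; the coefficient of $a^{i+1}_{j+2}$ vanishes exactly upon substituting the two mixed conditions just obtained from $gX=\zeta Xg$ and $xG=\zeta Gx$; and the coefficient of the top term $a^i_{j+2}$ produces the genuinely new constraint $\lambda^X_{i,j}\lambda^x_{i,j+1}-\zeta\lambda^x_{i,j}\lambda^X_{i,j+1}=0$. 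Once all six are verified, every listed restriction is accounted for and no further ones arise. Finally, the Type B statement follows by the identical argument, replacing $a^\star_\star$ by $b^\star_\star$ and the single scalar $\gamma$ by the source/target pair $\gamma_\pm$ throughout; the one genuine difference is that the relations $x^n=X^n=0$ now enter through Theorem~\ref{thm:T(n)typeB} rather than Theorem~\ref{thm:T(n)typeA}, contributing the extra conditions $(\gamma^x_+)^n=(\gamma^x_-)^n+\prod_\ell\lambda^x_{i,j+\ell}$ and its $X$-analogue.
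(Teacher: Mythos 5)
Your proposal is correct and follows essentially the same route as the paper's proof: obtain the formulas and the first block of restrictions from Proposition~\ref{prop:DQ0} and Theorem~\ref{thm:T(n)typeA} (via Lemma~\ref{lem:Dsubalg}) applied to the two Taft subalgebras, then verify the cross relations $gG=Gg$, $gX=\zeta Xg$, $xG=\zeta Gx$, and finally extract the remaining constraint $\lambda^X_{i,j}\lambda^x_{i,j+1}-\zeta\lambda^x_{i,j}\lambda^X_{i,j+1}=0$ from the relation \eqref{eq:xX}. Your additional observations --- that the module-algebra condition need only be checked on the generating subalgebras since the set of elements satisfying it is a subalgebra, and the explicit six-term coefficient bookkeeping for \eqref{eq:xX} --- only make explicit what the paper leaves as ``straight-forward to check.''
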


\begin{proof}
First, Proposition~\ref{prop:DQ0} gives an $D(T(n))$-action on a $\mathbb{Z}_n$-orbit of vertices; this imposes the restriction~\eqref{eq:Deicond} when $n \geq 3$. Now, consider the Type A case. Theorem \ref{thm:T(n)typeA} gives the expressions in \eqref{eq:DA}, along with some restrictions on parameters, which are listed in the first row of restrictions in the statement of the theorem. Now we need to check that the expressions, $g \cdot a^i_j$, $G \cdot a^i_j$, $x \cdot a^i_j$, $X \cdot a^i_j$, satisfy the relations of $D(T(n))$.  The relations $gG = Gg$,  $gX=\zeta Xg$, $xG = \zeta Gx$,  imply, respectively, that
$$\mu_{i,j}^G \mu_{i+1,j+1}^g = \mu_{i,j}^g \mu_{i+1,j+1}^G, \quad \quad 
 \zeta \mu_{i,j}^g \lambda_{i+1,j+1}^X =  \mu_{i,j+1}^g \lambda_{i,j}^X, \quad \quad
\zeta \mu_{i, j+1}^G \lambda_{i,j}^x = \mu_{i,j}^G \lambda_{i+1,j+1}^x.
$$
With these restrictions, it is straight-forward to check that the relation $xX - \zeta Xx-\zeta(gG-1) =0$ yields $$\lambda_{i,j}^X \lambda_{i,j+1}^x - \zeta \lambda_{i,j}^x \lambda_{i,j+1}^X = 0,$$
and we are done with the Type A case.
For Type B, the action is derived exactly as in the Type A case, by replacing $a^{\star}_{\star}$ with $b^{\star}_{\star}$, and $\gamma$ with $\gamma_{\pm}$, appropriately.
\end{proof}

\subsection{Gluing} \label{sec:glueUqD}
Gluing is achieved in the following manner.
Let $Q$ be a quiver that admits an action of $\mathbb{Z}_n$.
Since the actions of $\uqsl$ and $D(T(n)))$ on $\kk Q$ in Theorems~\ref{thm:Uqslaction} and~\ref{thm:Daction} are  determined by Taft actions, Theorem~\ref{thm:glue} and the algorithm in Section~\ref{sec:algorithm} applies. In other words, we can glue the actions of $\uqsl$ or of $D(T(n))$ on path algebras of $\mathbb{Z}_n$-minimal quivers. Hence, we obtain an action of $\uqsl$ (resp., $D(T(n))$) extending a Taft action on any path algebra $\kk Q$, where each path algebra of a $\mathbb{Z}_n$-minimal component of $Q$ admits an action of $\uqsl$ (resp., $D(T(n))$).

%%%%%%%%%%%%%%%%%%%%%%%%%%%%%%%%%%%%%%%
%%%%%%%%%%%%%%%%%%%%%%%%%%%%%%%%%%%%%%%
%%%%%%%%%%%%%%%%%%%%%%%%%%%%%%%%%%%%%%%

\section{Appendix} \label{sec:appendix}

In this appendix, we show that the relation $x^n=0$ imposes no restrictions on the $x$-action on the vertex $e_i$ and on the arrows $a^i_j$, $b^i_j$ of $\kk Q$, which is used in the proofs of Proposition~\ref{prop:TaftactQ0}, Theorems~\ref{thm:T(n)typeA} and~\ref{thm:T(n)typeB}.  Recall that $\zeta$ is a primitive $n$-th root of unity, with $n \geq 2$.  An easy computation shows that
\begin{equation}\label{eq:prodzeta}
\prod_{\ell = 1}^n \zeta^{\ell} = \zeta^{\frac{n(n+1)}{2}} =
\begin{cases}
1 & n\ \text{odd}\\
-1 &n\ \text{even}
\end{cases},
\end{equation}
a fact we will use several times here.

This appendix uses some basic properties of {symmetric polynomials} and $q$-binomial coefficients, which we recall here.
For integers $a,b$, the \emph{complete symmetric polynomial of degree $a$}, denoted $h_a(x_0, \dotsc, x_b)$, is defined as the sum of all monomials of total degree $a$ in the variables $x_0, \dotsc, x_b$.  We interpret the variable set as being empty when $b< 0$.  When $a=0$, we have that $h_0(x_0, \dotsc, x_b) = 1$ for any integer $b$. When $a \neq 0$, we have that $h_a(x_0, \dotsc, x_b) = 0$ if $a<0$ or $b<0$.  Also, a product $\prod_{i=a}^b$ is taken to be 1 whenever $b<a$. 
These polynomials are homogeneous, meaning that $h_a(cx_0, \dotsc, cx_b) = c^a h_a(x_0, \dotsc, x_b)$ for any scalar $c$.  We have immediately from the definition that
\begin{equation}\label{eq:hidentity}
h_{a}(x_0, \dotsc, x_{b-1}) + x_b h_{a-1}(x_0, \dotsc, x_b) = h_{a}(x_0, \dotsc, x_b)
\end{equation}
for any integers $a, b$.

\begin{lemma}\label{lem:hvanish}
For nonnegative integers $a, b$, we have that $h_a (1, \zeta, \zeta^2, \dotsc, \zeta^{b}) = 0$ when $n$ divides $a+b$ and $a, b \neq 0$.
\end{lemma}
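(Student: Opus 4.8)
The plan is to pass from the symmetric polynomial to its generating function and then exploit the factorization of $1 - t^n$ over the $n$-th roots of unity. Writing $P(a,b) := h_a(1, \zeta, \dotsc, \zeta^b)$ and working in the formal power series ring $\kk[[t]]$, expanding each geometric factor gives the standard identity
$$\sum_{a \geq 0} P(a,b)\, t^a ~=~ \prod_{i=0}^{b} \frac{1}{1 - \zeta^i t},$$
since the coefficient of $t^a$ on the right is precisely the sum of all degree-$a$ monomials in $1, \zeta, \dotsc, \zeta^b$. (This can also be obtained inductively from the recursion \eqref{eq:hidentity}.) So the task reduces to controlling the coefficients of a single rational function.

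The key observation is that $1, \zeta, \dotsc, \zeta^{n-1}$ are exactly the $n$-th roots of unity, whence $\prod_{i=0}^{n-1}(1 - \zeta^i t) = 1 - t^n$. For $0 \leq b \leq n-1$ I would use this to clear the denominator: multiplying the generating function by $1 - t^n$ and applying the factorization yields
$$(1 - t^n) \sum_{a \geq 0} P(a,b)\, t^a ~=~ \prod_{i=b+1}^{n-1}(1 - \zeta^i t) ~=:~ R(t),$$
a polynomial of degree $n - 1 - b$. Comparing coefficients of $t^a$ then gives the recurrence $P(a,b) - P(a-n, b) = r_a$, where $r_a$ is the coefficient of $t^a$ in $R(t)$ (so $r_a = 0$ for $a > n-1-b$), and where $P(a-n,b) = 0$ when $a < n$.

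I would finish by induction on $a$ within the residue class $a \equiv -b \pmod n$. When $n \mid a+b$ and $1 \leq b \leq n-1$ we have $a \equiv n - b \pmod n$ with $1 \leq n-b \leq n-1$; the smallest such positive $a$ is $a = n-b$, where the recurrence gives $P(n-b,b) = r_{n-b} = 0$ because $\deg R = n-1-b < n-b$. For larger $a$ in this class, $P(a-n,b) = 0$ by the inductive hypothesis and $r_a = 0$ since $a \geq n-b > \deg R$, so $P(a,b) = 0$. The crux of the argument, and the step to get right, is the degree bookkeeping: everything hinges on $\deg R = n-1-b$ being strictly less than the residue $n-b$ that the hypothesis $n \mid a+b$ forces, which is exactly what makes the relevant coefficient of $R$ vanish. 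This argument applies in the range $0 \leq b \leq n-1$, where the roots $\zeta^i$ are distinct, which is the range relevant to the applications in Proposition~\ref{prop:TaftactQ0} and Theorems~\ref{thm:T(n)typeA} and~\ref{thm:T(n)typeB}.
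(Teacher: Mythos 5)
Your proof is correct on the range you treat, and it takes a genuinely different route from the paper. The paper invokes the principal specialization $h_a(1,\zeta,\dotsc,\zeta^b) = {a+b \brack a}_{q=\zeta}$ (citing Stanley) and then appeals to the vanishing of Gaussian binomial coefficients at a root of unity; you instead pass to the generating function $\prod_{i=0}^{b}(1-\zeta^i t)^{-1}$, clear the denominator via $\prod_{i=0}^{n-1}(1-\zeta^i t)=1-t^n$, and run an induction on $a$ within a residue class, with everything reduced to the degree count $\deg R = n-1-b < n-b$. What your approach buys is self-containedness: no outside citation and no $q$-binomial machinery, at the cost of restricting to $0 \le b \le n-1$. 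That restriction is harmless here, since every invocation of the lemma in Lemmas~\ref{lem:x on ei}, \ref{lem:xact0A}, and \ref{lem:xact0B} has $a+b=n$ with $a,b \neq 0$, hence $b \le n-1$; and it is in fact necessary, because for $b \ge n$ the statement as printed fails --- for example $n=2$, $a=b=2$ gives $h_2(1,-1,1)=2$, consistent with ${4 \brack 2}_{q=-1}=2$ --- so the paper's blanket assertion that ${i \brack j}_{q=\zeta}$ vanishes whenever $n$ divides $i$ and $j \neq 0, i$ is itself slightly too strong. Your degree bookkeeping is the crux and it is right; the base case $a=n-b$ and the inductive step both close.
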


\begin{proof}
Recall that for a nonnegative integer $k$, the corresponding $q$-integer is defined as $[k]_q = 1 + q + q^2 + \cdots +q^{k-1}$, and that its evaluation $[k]_{q=\zeta}$ is 0 if and only if $n$ divides $k$.  The $q$-binomial coefficient ${i \brack j}_q$ is defined by analogous factorial formula for binomial coefficients, so we have that the evaluation ${i \brack j}_{q=\zeta}$ vanishes when $n$ divides $i$ and $j \neq 0,\, i$.

The {\it principal specialization} from \cite[Proposition~7.8.3]{Stanley}, with $q= \zeta$, gives us that
\[
h_a (1, \zeta, \zeta^2, \dotsc, \zeta^{b}) = {a+b \brack a}_{q=\zeta},
\]
so by the paragraph above this quantity vanishes when $n$ divides $a+b$ and $a, b\neq 0$.
\end{proof}

We prove a general lemma about a linear operator $X$ acting as the element $x$ does.

\begin{lemma}\label{lem:powerx}
Let $V$ be a vector space, and consider a collection of vectors $\{v^i_j\}$ in $V$, where $1 \leq i \leq m$ and $1\leq j \leq m'$. When $i$ (resp. $j$) is outside this range, interpret it modulo $m$ (resp., $m'$) to lie in this range. Let $X$ be a linear operator acting on $V$ such that
\[
Xv^i_j = \eta_j v^i_j + \theta_{i,j} v^{i+1}_{j+1} + \tau_{i,j} v^i_{j+1}
\]
for some scalars $\eta_j,\, \theta_{i,j},\, \tau_{i,j} \in \kk$.  Furthermore, assume that the scalars satisfy the relation 
\begin{equation}\label{eq:tautheta}
\tau_{i+1, j+1}\theta_{i,j} = \zeta \theta_{i,j+1}\tau_{i,j}
\end{equation}
for all pairs $(i, j)$.  Then, for all positive integers $k$, we have that
\[
X^k (v^i_j) = \sum_{0 \leq s \leq t \leq k} \psi_{k,s,t} v^{i+s}_{j+t}
\]
where
\begin{equation}\label{eq:psivalue}
\psi_{k,s,t} = \left(\prod_{\ell=0}^{s-1} \theta_{i+\ell, j+\ell +t-s}\right) \left(\prod_{\ell =0}^{t-s-1} \tau_{i,j+\ell} \right)  h_{k-t}(\eta_j, \eta_{j+1}, \cdots, \eta_{j+t}) h_s(1, \zeta, \dots, \zeta^{t-s}).
\end{equation}
\end{lemma}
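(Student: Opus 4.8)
The plan is to argue by induction on $k$, reducing everything to a single recurrence relating $X^{k+1}$ to $X^k$ and then checking that the closed form \eqref{eq:psivalue} satisfies it. The base case $k=1$ is immediate: the three admissible triples $(s,t)\in\{(0,0),(0,1),(1,1)\}$ give $\psi_{1,0,0}=\eta_j$, $\psi_{1,0,1}=\tau_{i,j}$, and $\psi_{1,1,1}=\theta_{i,j}$, all other factors being empty products or degree-$0$ symmetric polynomials equal to $1$, and this matches $X v^i_j$ exactly.

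For the inductive step I would write $X^{k+1}(v^i_j)=X\bigl(X^k(v^i_j)\bigr)$, apply the defining formula for $X$ to each $v^{i+s'}_{j+t'}$ occurring in $X^k(v^i_j)$, and collect the coefficient of $v^{i+s}_{j+t}$. The diagonal ($\eta$) term contributes from $(s',t')=(s,t)$, the $\theta$-term from $(s-1,t-1)$, and the $\tau$-term from $(s,t-1)$, giving the recurrence
\[
\psi_{k+1,s,t}=\eta_{j+t}\,\psi_{k,s,t}+\theta_{i+s-1,j+t-1}\,\psi_{k,s-1,t-1}+\tau_{i+s,j+t-1}\,\psi_{k,s,t-1},
\]
with the convention $\psi_{k,s',t'}=0$ when $(s',t')$ lies outside the range $0\le s'\le t'\le k$. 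These boundary cases are exactly absorbed by the stated vanishing conventions for $h_a$ when a degree or a variable count is negative, so no separate edge-case analysis is required.

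It then remains to verify the displayed recurrence from \eqref{eq:psivalue}. Writing $\psi_{k,s,t}=\Phi(s,t)\,h_{k-t}(\eta_j,\dots,\eta_{j+t})$, where $\Phi(s,t)$ gathers the monomial products $\Theta(s,t):=\prod_{\ell=0}^{s-1}\theta_{i+\ell,j+\ell+t-s}$ and $\prod_{\ell=0}^{t-s-1}\tau_{i,j+\ell}$ together with the factor $h_s(1,\zeta,\dots,\zeta^{t-s})$, I would split the check into an $\eta$-part and a $k$-independent geometric part. For the $\eta$-part, \eqref{eq:hidentity} gives
\[
h_{k+1-t}(\eta_j,\dots,\eta_{j+t})=\eta_{j+t}\,h_{k-t}(\eta_j,\dots,\eta_{j+t})+h_{k+1-t}(\eta_j,\dots,\eta_{j+t-1}).
\]
The first term on the right produces the $\eta_{j+t}\,\psi_{k,s,t}$ contribution, while the second is precisely the common $\eta$-factor $h_{k-(t-1)}(\eta_j,\dots,\eta_{j+t-1})$ shared by $\psi_{k,s,t-1}$ and $\psi_{k,s-1,t-1}$; cancelling it reduces the recurrence to the geometric identity $\Phi(s,t)=\theta_{i+s-1,j+t-1}\,\Phi(s-1,t-1)+\tau_{i+s,j+t-1}\,\Phi(s,t-1)$.

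I expect this geometric identity to be the main obstacle, since it is where relation \eqref{eq:tautheta} enters. The first summand already yields the correct product $\Theta(s,t)\prod_{\ell=0}^{t-s-1}\tau_{i,j+\ell}$ once one recognizes $\theta_{i+s-1,j+t-1}$ as the top ($\ell=s-1$) factor of $\Theta(s,t)$, but it carries the polynomial $h_{s-1}(1,\dots,\zeta^{t-s})$. In the second summand the factor $\tau_{i+s,j+t-1}$ has the wrong first index, so I would apply \eqref{eq:tautheta} iteratively $s$ times to slide it past the $\theta$'s of $\Theta(s,t-1)$, each step raising a $\theta$ second index by one and producing one factor of $\zeta$, giving $\tau_{i+s,j+t-1}\,\Theta(s,t-1)=\zeta^{s}\,\Theta(s,t)\,\tau_{i,j+t-s-1}$; absorbing the residual $\tau_{i,j+t-s-1}$ into the $\tau$-product then matches $\Theta(s,t)\prod_{\ell=0}^{t-s-1}\tau_{i,j+\ell}$ and leaves the factor $\zeta^{s}h_s(1,\dots,\zeta^{t-s-1})$. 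The two summands therefore combine to the common product times $h_{s-1}(1,\dots,\zeta^{t-s})+\zeta^{s}h_s(1,\dots,\zeta^{t-s-1})$, and by homogeneity $\zeta^{s}h_s(1,\dots,\zeta^{t-s-1})=h_s(\zeta,\dots,\zeta^{t-s})$, so a final application of \eqref{eq:hidentity} for the first variable (valid by symmetry of $h_s$, with that variable equal to $1$) collapses this sum to $h_s(1,\dots,\zeta^{t-s})$, which is exactly the $\zeta$-factor of $\Phi(s,t)$. This closes the induction.
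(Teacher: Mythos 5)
Your proposal is correct and follows essentially the same route as the paper's proof: induction on $k$ via the same three-term recurrence for $\psi$, the same iterated use of \eqref{eq:tautheta} to slide $\tau_{i+s,j+t-1}$ past the $\theta$-product at the cost of $\zeta^{s}$, and the same two applications of \eqref{eq:hidentity} (plus homogeneity of $h_s$) to reassemble the symmetric polynomials. The only difference is organizational — you factor out the $\eta$-polynomial first and isolate a $k$-independent "geometric" identity, whereas the paper simplifies one combined expression — but the underlying computation is identical.
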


\begin{proof}
We proceed by induction on $k$. The base case of $k=1$ follows from simple substitution.  So assume that the statement is true when $k$ is replaced by $k-1$; that is, we have a formula for $X^{k-1} (v^i_j)$ for all pairs $(i, j)$.  Now, when applying $X$ to $X^{k-1} (v^i_j)$, the coefficient $\psi_{k,s,t}$ is the sum of contributions from three terms.  Namely, we have that
\begin{itemize}
\item $X (\psi_{k-1, s, t} v^{i+s}_{j+t})$ contributes $\eta_{j+t} \psi_{k-1, s, t}$,
\item $X (\psi_{k-1, s-1, t-1} v^{i+s-1}_{j+t-1})$ contributes $\theta_{i+s-1, j+t-1} \psi_{k-1, s-1, t-1}$,
\item $X (\psi_{k-1, s, t-1} v^{i+s}_{j+t-1})$ contributes $\tau_{i+s, j+t-1} \psi_{k-1, s, t-1}$.
\end{itemize}
So we obtain
\begin{equation}\label{eq:blah}
\begin{split}
\psi_{k,s,t} = \left(\prod_{\ell=0}^{s-1} \theta_{i+\ell, j+\ell+t-s}\right) \left(\prod_{\ell =0}^{t-s-1} \tau_{i,j+\ell} \right) 
\Bigl[\ &\eta_{j+t}h_{k-1-t}(\eta_j, \dotsc, \eta_{j+t})  h_s(1, \zeta, \dotsc, \zeta^{t-s}) \\
+ &h_{k-t} (\eta_j, \dotsc, \eta_{j+t-1}) h_{s-1}(1, \zeta, \dotsc, \zeta^{t-s})\Bigr]
\end{split}
\end{equation}
\vspace{-.3in}

$$\hspace{.6in} +\tau_{i+s,j+t-1}\left( \prod_{\ell=0}^{s-1}\theta_{i+\ell, j+\ell+t-s-1}\right)\left( \prod_{\ell=0}^{t-s-2} \tau_{i,j+\ell}\right)h_{k-t}(\eta_j, \dots, \eta_{j+t-1})h_s(1, \zeta, \dots, \zeta^{t-1-s}).$$

\noindent To simplify the last summand, we use \eqref{eq:tautheta} to compute
\begin{equation}\label{eq:taupasstheta}
\begin{split}
\tau_{i+s, j+t-1} \textstyle \prod_{\ell=0}^{s-1} \theta_{i+\ell, j+\ell +t-1-s} =\ &\left(\tau_{i+s, j+t-1}\right) \theta_{i+s-1, j+t-2} \theta_{i+s-2, j+t-3} \cdots \theta_{i,j+t-1-s} \\
=\ &\zeta \theta_{i+s-1, j+t-1} \left(\tau_{i+s-1, j+t-2}\right)\theta_{i+s-2, j+t-3} \cdots   \theta_{i,j+t-1-s}\\
=\ &\zeta^2 \theta_{i+s-1, j+t-1} \theta_{i+s-2, j+t-2} \left(\tau_{i+s-2, j+t-3}\right)\cdots  \theta_{i,j+t-1-s} \\
\dots =\ &\zeta^s \left(\tau_{i, j+t-s -1}\right) \textstyle \prod_{\ell=0}^{s-1} \theta_{i+\ell, j+\ell +t-s}.
\end{split}
\end{equation}

\noindent By \eqref{eq:blah} and \eqref{eq:taupasstheta}, we now have that
\begin{equation}\label{eq:psisum1}
\begin{split}
\psi_{k,s,t} = \left(\prod_{\ell=0}^{s-1} \theta_{i+\ell, j+\ell+t-s}\right) \left(\prod_{\ell =0}^{t-s-1} \tau_{i,j+\ell} \right) 
\Bigl[\ &\eta_{j+t}h_{k-1-t}(\eta_j, \dotsc, \eta_{j+t})  h_s(1, \zeta, \dotsc, \zeta^{t-s}) \\
+ &h_{k-t} (\eta_j, \dotsc, \eta_{j+t-1}) h_{s-1}(1, \zeta, \dotsc, \zeta^{t-s})\\
+ & h_{k-t}(\eta_j, \dotsc, \eta_{j+t-1}) \, \zeta^s \, h_s (1, \zeta, \dotsc, \zeta^{t-s-1})\Bigr] .
\end{split}
\end{equation}
The factor of $\zeta^s$ in the last term can be absorbed into the (homogeneous) polynomial $h_s$ to make the sum of the last two terms equal to
\[
h_{k-t} (\eta_j, \dotsc, \eta_{j+t-1}) \left( h_{s-1}(1, \zeta, \dotsc, \zeta^{t-s}) + h_s (\zeta, \zeta^2, \dotsc, \zeta^{t-s}) \right).
\]
Now, taking $x_{t-s-i} = \zeta^i$, \eqref{eq:hidentity} implies  that the two terms in the parenthesis simplify to 
$
 h_s(1, \zeta, \dotsc, \zeta^{t-s})$.
Thus, \eqref{eq:psisum1} yields
\[
\psi_{k,s,t} = \left(\prod_{\ell=0}^{s-1} \theta_{i+\ell,j+\ell+t-s}\right) \left(\prod_{\ell =0}^{t-s-1} \tau_{i,j+\ell} \right)  h_s(1, \zeta, \dotsc, \zeta^{t-s}) [ \eta_{j+t}h_{k-1-t}(\eta_j, \dotsc, \eta_{j+t}) + h_{k-t}(\eta_j, \dotsc, \eta_{j+t-1}) ].
\]
Again applying \eqref{eq:hidentity}, we find that \eqref{eq:psivalue} holds, and the induction is complete.
\end{proof}

We can apply this result to show that $x^n$ acts by 0 in the following cases.

\begin{lemma} \label{lem:x on ei}
In the notation of Proposition \ref{prop:TaftactQ0}, we have that $x^n \cdot e_i =0$.
\end{lemma}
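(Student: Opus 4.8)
The plan is to deduce this from the general operator computation in Lemma~\ref{lem:powerx}. By Proposition~\ref{prop:TaftactQ0}(ii) (equation~\eqref{eq:xonvertex}) the relevant action is
$$x \cdot e_i = \gamma\zeta^i e_i - \gamma\zeta^{i+1} e_{i+1},$$
with indices read modulo $n$. I would apply Lemma~\ref{lem:powerx} to the vector space $V = \kk Q_0$ with the single-indexed family $v^i_j := e_j$ (taken independent of the upper index, i.e.\ $m=1$ and $m'=n$), the operator $X = x$, and scalars $\eta_j = \gamma\zeta^j$, $\tau_{i,j} = -\gamma\zeta^{j+1}$, and $\theta_{i,j} = 0$. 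With these choices the action relation of the lemma reproduces the displayed formula, and the hypothesis~\eqref{eq:tautheta} holds trivially since $\theta \equiv 0$.

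Because every $\theta_{i,j}$ vanishes, the product $\prod_{\ell=0}^{s-1}\theta_{i+\ell,\,j+\ell+t-s}$ in~\eqref{eq:psivalue} is zero for $s \geq 1$, so only the terms with $s=0$ survive in $x^n \cdot e_j = \sum_{0 \le s \le t \le n}\psi_{n,s,t}\, e_{j+t}$. For $s=0$ the coefficient simplifies to $\psi_{n,0,t} = \bigl(\prod_{\ell=0}^{t-1}\tau_{i,j+\ell}\bigr)\, h_{n-t}(\eta_j, \dotsc, \eta_{j+t})$. Using homogeneity of $h_{n-t}$ together with $\zeta^n = 1$, I would rewrite $h_{n-t}(\eta_j,\dotsc,\eta_{j+t}) = \gamma^{\,n-t}\zeta^{\,j(n-t)} h_{n-t}(1,\zeta,\dotsc,\zeta^{t})$. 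Since $(n-t)+t = n$ is always divisible by $n$, Lemma~\ref{lem:hvanish} forces $h_{n-t}(1,\zeta,\dotsc,\zeta^t) = 0$ for every intermediate value $t$ with $1 \le t \le n-1$. Hence only $t=0$ and $t=n$ contribute to the sum.

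The remaining step, which is the only real computation, is to evaluate these two surviving coefficients and verify that they cancel. For $t=0$ one gets $\psi_{n,0,0} = h_n(\gamma\zeta^j) = (\gamma\zeta^j)^n = \gamma^n$, attached to $e_j$. For $t=n$ one gets $\psi_{n,0,n} = \prod_{\ell=0}^{n-1}(-\gamma\zeta^{j+\ell+1}) = (-1)^n\gamma^n\,\zeta^{\,nj + n(n+1)/2}$, attached to $e_{j+n} = e_j$. I expect the sign bookkeeping here to be the main obstacle: invoking~\eqref{eq:prodzeta} to evaluate $\zeta^{\,n(n+1)/2}$ and combining it with the factor $(-1)^n$ yields $-\gamma^n$ in both the odd and even cases (the two sign sources compensate). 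Therefore $x^n\cdot e_j = \gamma^n e_j - \gamma^n e_j = 0$, which is the claim.
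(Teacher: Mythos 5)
Your proof is correct and follows essentially the same route as the paper: both specialize Lemma~\ref{lem:powerx} so that only two coefficients $\psi_{n,s,t}$ survive (via Lemma~\ref{lem:hvanish}), and both verify that these evaluate to $\gamma^n$ and $-\gamma^n$ using \eqref{eq:prodzeta}. The only difference is cosmetic: the paper encodes the shift $e_i \mapsto e_{i+1}$ in the $\theta$-slot (taking $v^i_i = e_i$ with $\tau \equiv 0$, so the surviving terms are $(s,t)=(0,0)$ and $(n,n)$), whereas you encode it in the $\tau$-slot with $m=1$ and $\theta \equiv 0$ (surviving terms $(0,0)$ and $(0,n)$), and both specializations are legitimate since \eqref{eq:tautheta} holds trivially.
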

\begin{proof} 
Fix $i$.  We will apply Lemma~\ref{lem:powerx} in the case $k=n,\ m=m'$, with $X=x$ and $v^i_i = e_i$ for $1 \leq i \leq m$. The $v^i_j$ do not play a role in the proof for $i \neq j$, so we set $\theta_{i,j}=\tau_{i,j} = 0$ when $i\neq j$ and  assume $i=j$ for the remainder of the proof.
From the proof of Proposition \ref{prop:TaftactQ0}, we have that $\eta_i=\gamma \zeta^i$, $\theta_{i,i}=-\gamma \zeta^{i+1}$, and $\tau_{i,i} = 0$ for all $i$.  So, \eqref{eq:tautheta} holds and we can apply Lemma~\ref{lem:powerx}. Making these substitutions into \eqref{eq:psivalue}, we immediately see that $\psi_{n, s, t} = 0$ unless $s=t$ by considering the $\tau$ factor.  In case $s=t$, the $\tau$ factor and the last factor are equal to 1. This gives
\[
\psi_{n, s, s} = \left(\prod_{\ell=0}^{s-1} \theta_{i+\ell, i+\ell}\right) h_{n-s}(\eta_i, \eta_{i+1}, \cdots, \eta_{i+s}) .
\]
Since $h_{n-s}(\eta_i, \eta_{i+1}, \cdots, \eta_{i+s})$ is a scalar multiple of $h_{n-s}(1, \zeta, \dotsc, \zeta^s)$, this vanishes by Lemma \ref{lem:hvanish} unless $s=0$ or $s=n$.

So we have that $x^n \cdot e_i = (\psi_{n, 0, 0} + \psi_{n, n, n})e_i$.  These are easily computed by substitution to be
\[
\psi_{n, 0, 0} = h_n (\eta_i) = (\eta_i)^n = \zg^n
\]
\[
\psi_{n, n, n} ~=~ \prod_{\ell = 0}^{n-1} \theta_{i+\ell, i+\ell} ~=~ (-1)^n\zg^n \prod_{\ell = 0}^{n-1} \zeta^{i+\ell+1} ~=~  (-1)^n \zg^n \prod_{\ell = 0}^{n-1} \zeta^{\ell} ~=~ - \zg^n,
\]
where the last equality invokes Equation \eqref{eq:prodzeta}.
Thus we have shown that $x^n \cdot e_i= 0$.
\end{proof}

\begin{lemma}\label{lem:xact0A}
In the notation of Theorem \ref{thm:T(n)typeA}, we have that $x^n \cdot a^i_j = 0$ for all $(i,j)$.
\end{lemma}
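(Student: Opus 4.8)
The plan is to mirror the proof of Lemma~\ref{lem:x on ei} (the vertex case) via the general operator formula in Lemma~\ref{lem:powerx}, the new feature being that the ``$\tau$'' coefficients no longer vanish. First I would put the $x$-action of Theorem~\ref{thm:T(n)typeA} into the shape required by Lemma~\ref{lem:powerx}: taking $V = \kk Q$, $v^i_j = a^i_j$, $k = n$, and $m'=m$ equal to the number of vertices, I read off from \eqref{eq:A} the scalars
$$\eta_j = \gamma\zeta^j, \qquad \theta_{i,j} = -\gamma\zeta^{i+1}\mu_{i,j}, \qquad \tau_{i,j} = \lambda_{i,j}.$$
The hypothesis \eqref{eq:tautheta} of Lemma~\ref{lem:powerx} then reads $\lambda_{i+1,j+1}\mu_{i,j} = \zeta\lambda_{i,j}\mu_{i,j+1}$, which is exactly the third bulleted relation among the scalars in Theorem~\ref{thm:T(n)typeA}. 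So the lemma applies and yields $x^n\cdot a^i_j = \sum_{0\le s\le t\le n}\psi_{n,s,t}\,a^{i+s}_{j+t}$ with $\psi_{n,s,t}$ as in \eqref{eq:psivalue}.

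Next I would show that only the triples $(s,t)\in\{(0,0),(0,n),(n,n)\}$ can contribute, using Lemma~\ref{lem:hvanish}. For $0 < t < n$, homogeneity reduces $h_{n-t}(\eta_j,\dotsc,\eta_{j+t})$ to a scalar times $h_{n-t}(1,\zeta,\dotsc,\zeta^t)$, which vanishes since $(n-t)+t = n$ and both $n-t, t \neq 0$. For $t = n$ with $0 < s < n$, the factor $h_s(1,\zeta,\dotsc,\zeta^{n-s})$ vanishes for the same reason, and $t=0$ forces $s=0$. Crucially, since $m \mid n$ and all indices are read modulo $m$, the three surviving basis vectors $a^i_j$, $a^i_{j+n}$, and $a^{i+n}_{j+n}$ are all equal to $a^i_j$, so $x^n\cdot a^i_j = (\psi_{n,0,0} + \psi_{n,0,n} + \psi_{n,n,n})\,a^i_j$.

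Finally I would evaluate the three coefficients. The computations $\psi_{n,0,0} = \gamma^n$ and $\psi_{n,n,n} = -\gamma^n$ are identical to those in Lemma~\ref{lem:x on ei} (using $\prod_{\ell}\mu_{i+\ell,j+\ell}=1$ and \eqref{eq:prodzeta}), so these two terms cancel. The genuinely new term is $\psi_{n,0,n} = \prod_{\ell=0}^{n-1}\lambda_{i,j+\ell}$. The key observation is that, as $\ell$ runs over $0,\dotsc,n-1$, the subindex $j+\ell$ runs through every residue modulo $m$ (each exactly $n/m$ times); in particular it takes the value $i$, contributing a factor $\lambda_{i,i} = 0$ by the vanishing condition of Theorem~\ref{thm:T(n)typeA}. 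Hence the whole product is $0$ and $x^n\cdot a^i_j = 0$. The main obstacle is conceptual rather than computational: recognizing that the new $\lambda$-term collapses onto $a^i_j$ after reduction modulo $m$ and therefore must contain a diagonal factor $\lambda_{i,i}$; once this is seen, the remainder is a routine specialization of the already-established Lemma~\ref{lem:powerx} exactly parallel to the vertex case.
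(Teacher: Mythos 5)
Your proposal is correct and follows essentially the same route as the paper: both specialize Lemma~\ref{lem:powerx} with $\eta_j=\gamma\zeta^j$, $\theta_{i,j}=-\gamma\mu_{i,j}\zeta^{i+1}$, $\tau_{i,j}=\lambda_{i,j}$, use Lemma~\ref{lem:hvanish} to reduce to $(s,t)\in\{(0,0),(0,n),(n,n)\}$, cancel $\psi_{n,0,0}=\gamma^n$ against $\psi_{n,n,n}=-\gamma^n$ via \eqref{eq:prodmu} and \eqref{eq:prodzeta}, and kill $\psi_{n,0,n}$ by observing that the product $\prod_{\ell}\lambda_{i,j+\ell}$ contains a diagonal factor $\lambda_{i,i}=0$. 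No gaps.
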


\begin{proof}
We will apply Lemma \ref{lem:powerx} in the case $k=n$ with $X=x$ and $v^i_j = a^i_j$.  From the proof of Theorem \ref{thm:T(n)typeA}, we have that $\eta_j=\gamma \zeta^j$, $\theta_{i,j}=-\gamma \mu_{i,j} \zeta^{i+1}$, and $\tau_{i,j} = \lambda_{i, j}$, and that these satisfy \eqref{eq:tautheta}.  Now making these substitutions into \eqref{eq:psivalue}, we see that $\psi_{n, s, t}$ is a scalar times
\[
h_s(1, \zeta, \dotsc, \zeta^{t-s}) h_{n-t}(1, \zeta, \dots, \zeta^t) .
\]
By Lemma \ref{lem:hvanish}, the second factor vanishes except when $t=0$ or $t= n$. Then, again by Lemma~\ref{lem:hvanish}, and then the first factor vanishes except when $s=0$ or $s=t=n$. Therefore, we need only consider the cases $(s, t) =(0, 0),\, (0,n),$ and $(n,n)$, in which these factors both equal 1.  Notice that we have in all of these cases, $a^{i+s}_{j+t} = a^i_j$, because the indices for arrows are taken modulo $n$.  So it suffices to show that $\psi_{n, 0, 0} + \psi_{n, 0,n} + \psi_{n, n, n} = 0$.  Substitute and compute, omitting factors equal to 1, we get:
\[
\psi_{n, 0, 0} = h_n (\eta_j) = (\eta_j)^n = \zg^n
\quad \text{ and } \quad
\psi_{n, 0,n} = \prod_{\ell = 0}^{n-1} \tau_{i, j+\ell} = 0.
\]
The latter follows by the second condition of Theorem~\ref{thm:T(n)typeA}, because there will be a factor with $i =j + \ell$ modulo $n$. Further,
\[
\psi_{n, n, n} ~=~ 
\prod_{\ell = 0}^{n-1} \theta_{i+\ell, j+\ell} ~=~ (-1)^n\zg^n \prod_{\ell = 0}^{n-1} \mu_{i+\ell, j+\ell} \zeta^{i+\ell+1} ~=~  (-1)^n \zg^n \prod_{\ell = 1}^n \zeta^{\ell} ~=~ - \zg^n,
\]
where the penultimate equality uses \eqref{eq:prodmu} and the last equality invokes Equation \eqref{eq:prodzeta}.
Thus we have shown that $x^n \cdot a^i_j = 0$.
\end{proof}

\begin{lemma}\label{lem:xact0B}
In the notation of Theorem \ref{thm:T(n)typeB}, we have that $x^n \cdot b^i_j = 0$ for all $(i,j)$.
\end{lemma}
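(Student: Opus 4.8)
The plan is to follow the same strategy as in Lemma~\ref{lem:xact0A}, applying Lemma~\ref{lem:powerx} with $k=n$, $X=x$, and $v^i_j = b^i_j$, while keeping careful track of the fact that now the superindices and subindices live in \emph{different} cyclic ranges (superindices modulo $m$, subindices modulo $m'$, with both $m,m'$ dividing $n$). Reading off the substitutions from \eqref{eq:B},
$$\eta_j = (\gamma_-)\zeta^j, \qquad \theta_{i,j} = -(\gamma_+)\mu_{i,j}\zeta^{i+1}, \qquad \tau_{i,j} = \lambda_{i,j},$$
I would first verify the hypothesis \eqref{eq:tautheta} of Lemma~\ref{lem:powerx}: after cancelling the common factor $-(\gamma_+)\zeta^{i+1}$, it reduces precisely to the third scalar condition $\lambda_{i+1,j+1}\mu_{i,j} = \zeta\lambda_{i,j}\mu_{i,j+1}$ of Theorem~\ref{thm:T(n)typeB}, so Lemma~\ref{lem:powerx} applies.

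With the formula \eqref{eq:psivalue} in hand, the next step is to determine which coefficients $\psi_{n,s,t}$ survive. By homogeneity of the complete symmetric polynomials, each $\psi_{n,s,t}$ is a scalar multiple of $h_s(1,\zeta,\dotsc,\zeta^{t-s})\,h_{n-t}(1,\zeta,\dotsc,\zeta^{t})$. The arguments of the second factor have total index sum $(n-t)+t=n$, so by Lemma~\ref{lem:hvanish} it vanishes unless $t=0$ or $t=n$; the first factor then forces $s=0$ or $s=n$. Hence only $(s,t)=(0,0),(0,n),(n,n)$ can contribute.

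The point distinguishing this from the Type A computation comes next. Because subindices are read modulo $m'$ and superindices modulo $m$, each surviving target arrow $b^{i+s}_{j+t}$ collapses back to $b^i_j$: for $(0,n)$ the subindex returns since $m'\mid n$, and for $(n,n)$ both indices return since $m\mid n$ and $m'\mid n$. Thus $x^n\cdot b^i_j$ is a scalar multiple of $b^i_j$, and it remains to sum the three surviving coefficients. Direct substitution into \eqref{eq:psivalue} gives $\psi_{n,0,0}=h_n(\eta_j)=(\gamma_-)^n$ and $\psi_{n,0,n}=\prod_{\ell=0}^{n-1}\lambda_{i,j+\ell}$; for $\psi_{n,n,n}=\prod_{\ell=0}^{n-1}\theta_{i+\ell,j+\ell}$ the relation $\prod_{\ell=0}^{n-1}\mu_{i+\ell,j+\ell}=1$ from \eqref{eq:prodmu} together with the identity \eqref{eq:prodzeta} yields $\psi_{n,n,n}=-(\gamma_+)^n$, exactly as in Lemma~\ref{lem:xact0A}.

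Summing, I obtain
$$x^n\cdot b^i_j = \Bigl((\gamma_-)^n + \textstyle\prod_{\ell=0}^{n-1}\lambda_{i,j+\ell} - (\gamma_+)^n\Bigr)b^i_j,$$
which vanishes precisely under the fourth scalar condition $(\gamma_+)^n=(\gamma_-)^n+\prod_{\ell=0}^{n-1}\lambda_{i,j+\ell}$ of Theorem~\ref{thm:T(n)typeB}. The only genuine obstacle — and the sole conceptual departure from the Type A argument — is the middle term $\psi_{n,0,n}$: in Type A it vanished automatically, because $i\equiv j+\ell$ modulo $n$ for some $\ell$ forced a factor $\lambda_{i,i}=0$, whereas here the top and bottom vertex indices need not collide, so this product genuinely survives and is exactly what produces the nontrivial constraint linking $\gamma_+$, $\gamma_-$, and the $\lambda_{i,j}$.
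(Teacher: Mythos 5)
Your proposal is correct and follows essentially the same route as the paper: apply Lemma~\ref{lem:powerx} with $k=n$, $X=x$, $v^i_j=b^i_j$, use Lemma~\ref{lem:hvanish} to reduce to the three coefficients $\psi_{n,0,0}=(\gamma_-)^n$, $\psi_{n,0,n}=\prod_{\ell=0}^{n-1}\lambda_{i,j+\ell}$, $\psi_{n,n,n}=-(\gamma_+)^n$, and observe that their sum vanishes exactly by the fourth scalar condition of Theorem~\ref{thm:T(n)typeB}. You have also correctly identified the one genuine departure from the Type~A case, namely that $\psi_{n,0,n}$ no longer vanishes automatically, which is precisely the point the paper's (terser) proof is making.
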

\begin{proof}
Again, we apply Lemma \ref{lem:powerx} with $k=n$, $X=x$, and $v^i_j = b^i_j$.  The proof is the same as Lemma \ref{lem:xact0A}, except in this case we find that
\[
\psi_{n, 0, 0} = \zg_-^n, \qquad \psi_{n, n, n} =  - \zg_+^n, \qquad \text{and} \qquad
\psi_{n, 0,n} = \prod_{\ell = 0}^{n-1} \tau_{i, j+\ell} = \prod_{\ell = 0}^{n-1} \lambda_{i, j+\ell}.
\]
Thus, we see that $x^n$ acts by 0 if and only if $(\gamma_+)^n = (\gamma_-)^n + \prod_{\ell = 0}^{n-1} \lambda_{i, j+\ell}$, which is the condition assumed on the scalars in Theorem \ref{thm:T(n)typeB}.
\end{proof}

\section*{Acknowledgments}
We thank Steven Sam for directing us to the reference for principal specializations of symmetric polynomials. We also thank Mio Iovanov for helping us with the proof of Lemma \ref{lem:Taftfaithful}, and Susan Montgomery for suggesting the extensions of our results in Section \ref{sec:Uq}.
The second author was supported by the National Science Foundation: NSF-grants DMS-1102548 and DMS-1401207.

\bibliography{TaftActionsOnPathAlgs_biblio}

\def\cprime{$'$}
\begin{thebibliography}{10}

\bibitem{assemetal}
I.~Assem, D.~Simson, and A.~Skowro{\'n}ski.
\newblock {\em Elements of the representation theory of associative algebras.
  {V}ol. 1}, volume~65 of {\em London Mathematical Society Student Texts}.
\newblock Cambridge University Press, Cambridge, 2006.
\newblock Techniques of representation theory.

\bibitem{Banica}
T.~Banica.
\newblock Quantum automorphism groups of homogeneous graphs.
\newblock {\em J. Funct. Anal.}, 224(2):243--280, 2005.

\bibitem{BBC}
T.~Banica, J.~Bichon, and G.~Chenevier.
\newblock Graphs having no quantum symmetry.
\newblock {\em Ann. Inst. Fourier (Grenoble)}, 57(3):955--971, 2007.

\bibitem{BPW}
T.~Bates, D.~Pask, and P.~Willis.
\newblock Group actions on labeled graphs and their ${C}^*$-algebras.
\newblock {\em Illinois Journal of Mathematics}, 56(4):1149--1168, 2012.

\bibitem{Bichon}
J.~Bichon.
\newblock Quantum automorphism groups of finite graphs.
\newblock {\em Proc. Amer. Math. Soc.}, 131(3):665--673 (electronic), 2003.

\bibitem{Chen}
H.-X. Chen.
\newblock A class of noncommutative and noncocommutative {H}opf algebras: the
  quantum version.
\newblock {\em Comm. Algebra}, 27(10):5011--5032, 1999.

\bibitem{MR2047446}
X.-W. Chen, H.-L. Huang, Y.~Ye, and P.~Zhang.
\newblock Monomial {H}opf algebras.
\newblock {\em J. Algebra}, 275(1):212--232, 2004.

\bibitem{CibilsRosso0}
C.~Cibils and M.~Rosso.
\newblock Alg\`ebres des chemins quantiques.
\newblock {\em Adv. Math.}, 125(2):171--199, 1997.

\bibitem{CibilsRosso1}
C.~Cibils and M.~Rosso.
\newblock Hopf quivers.
\newblock {\em J. Algebra}, 254(2):241--251, 2002.

\bibitem{Gordienko}
A.~S. Gordienko.
\newblock Algebras simple with respect to a {T}aft algebra action (preprint).
\newblock {\tt http://arxiv.org/abs/1402.5272}.

\bibitem{MR2609178}
H.-L. Huang and G.~Liu.
\newblock On quiver-theoretic description for quasitriangularity of {H}opf
  algebras.
\newblock {\em J. Algebra}, 323(10):2848--2863, 2010.

\bibitem{MR2741251}
H.-L. Huang, Y.~Ye, and Q.~Zhao.
\newblock Hopf structures on the {H}opf quiver {$Q(\langle g\rangle,g)$}.
\newblock {\em Pacific J. Math.}, 248(2):317--334, 2010.

\bibitem{KumjianPask}
A.~Kumjian and D.~Pask.
\newblock {$C^*$}-algebras of directed graphs and group actions.
\newblock {\em Ergodic Theory Dynam. Systems}, 19(6):1503--1519, 1999.

\bibitem{Majid:doubles}
S.~Majid.
\newblock Doubles of quasitriangular {H}opf algebras.
\newblock {\em Comm. Algebra}, 19(11):3061--3073, 1991.

\bibitem{Majid}
S.~Majid.
\newblock Cross products by braided groups and bosonization.
\newblock {\em J. Algebra}, 163(1):165--190, 1994.

\bibitem{Montgomery}
S.~Montgomery.
\newblock {\em Hopf algebras and their actions on rings}, volume~82 of {\em
  CBMS Regional Conference Series in Mathematics}.
\newblock Published for the Conference Board of the Mathematical Sciences,
  Washington, DC, 1993.

\bibitem{MS}
S.~Montgomery and H.-J. Schneider.
\newblock Skew derivations of finite-dimensional algebras and actions of the
  double of the {T}aft {H}opf algebra.
\newblock {\em Tsukuba J. Math.}, 25(2):337--358, 2001.

\bibitem{MR2894855}
D.~E. Radford.
\newblock {\em Hopf algebras}, volume~49 of {\em Series on Knots and
  Everything}.
\newblock World Scientific Publishing Co. Pte. Ltd., Hackensack, NJ, 2012.

\bibitem{Schiffler:2014aa}
R.~Schiffler.
\newblock {\em Quiver Representations}.
\newblock CMS Books in Mathematics. Springer, 2014.

\bibitem{Stanley}
R.~P. Stanley.
\newblock {\em Enumerative combinatorics. {V}ol. 2}, volume~62 of {\em
  Cambridge Studies in Advanced Mathematics}.
\newblock Cambridge University Press, Cambridge, 1999.
\newblock With a foreword by Gian-Carlo Rota and appendix 1 by Sergey Fomin.

\bibitem{MR2089252}
F.~van Oystaeyen and P.~Zhang.
\newblock Quiver {H}opf algebras.
\newblock {\em J. Algebra}, 280(2):577--589, 2004.

\bibitem{MR2267572}
P.~Zhang.
\newblock Hopf algebras on {S}churian quivers.
\newblock {\em Comm. Algebra}, 34(11):4065--4082, 2006.

\end{thebibliography}

\end{document}